\newtheorem{theorem}{Theorem}[section]
\newtheorem{lemma}{Lemma}[section]
\newtheorem{remark}{Remark}[section]
\def\a{\alpha} \def\b{\beta}
\def\dref#1{(\ref{#1})}
 \def\dfrac{\displaystyle\frac}
\newtheorem{corollary}{Corollary}[section]
\def\be{\begin{equation}}
\def\bel{\begin{equation}\label}
\def\ee{\end{equation}}
\def\ba{\begin{array}}
\def\ea{\end{array}}
\def\banl{\begin{eqnarray}\label}
\def\ean{\end{eqnarray}}
 \def\bna{\begin{eqnarray}}
\def\ena{\end{eqnarray}} \def\dref#1{(\ref{#1})}
\begin{document}

\title{When Adaptive Diffusion Algorithm Converges to True Parameter? \footnotemark[1]
\author{Zhaobo Liu\footnotemark[2]
, \,Chanying Li \footnotemark[2]
          }
}

\footnotetext[1]{The work was supported by  the National Natural Science Foundation of China under Grants 61422308 and 11688101.  }

\footnotetext[2]{%
L.~Liu and C.~Li are with the Key Laboratory of Systems and Control, Academy of Mathematics and Systems Science, Chinese Academy of Sciences, Beijing 100190, P.~R.~China.   They are also with the School of Mathematical Sciences, University of Chinese Academy of Sciences, Beijing 100049, P. R. China.  Corresponding author: Chanying~Li (Email: \texttt{cyli@amss.ac.cn}).}

 \maketitle

\begin{abstract}
We attempt to answer the question  what data brings   adaptive    diffusion    algorithms converging to true parameters.  The discussion begins with the diffusion recursive least squares (RLS).   When unknown parameters are scalar, the necessary and sufficient condition of  the convergence for the   diffusion RLS  is established,    in terms of the strong consistency and mean-square convergence both. However, for the general high dimensional parameter case, our results suggest that the  diffusion RLS  in a connected network might cause a  diverging  error,   even if  local data at every node could guarantee  the individual  RLS   tending to true parameters. Due to the possible failure of the diffusion RLS,
 we  prove that the diffusion Robbins-Monro (RM)   algorithm  could  achieve the  strong consistency and  mean-square convergence simultaneously,
under some cooperative information conditions. The convergence rates  of the diffusion RM    are  derived explicitly.

\end{abstract}

\section{Introduction}

Perhaps,
it is only natural  that this paper is intended to prove  adaptive diffusion algorithms   outperform their individual counterparts in terms of estimation performances. It is an accepted fact for the diffusion  least mean squares (LMS) with regard to mean stability and mean-square stability  (see \cite{LSayed08}, \cite{TuSayed12}, \cite{guo2018}). But this time, involving the sophisticated recursive least squares (RLS) in diffusion strategies, situation changes.

An adaptive network  is built up from a set of nodes which could communicate with their neighbors through interlinks. Each node observes partial information related to an unknown parameter of common interest     and performs local estimation separately. There are two main  types of fully decentralized strategies in distributed estimation, namely, consensus strategies  \cite{carli2008}, \cite{cws2014}, \cite{kar2011}, \cite{guoxie2018}  and diffusion strategies \cite{ab2016}, \cite{gha2013}, \cite{kha2012}, \cite{nos2015}, \cite{pig2015}, \cite{ta2010}.  In light of  local parameter estimation and      processed information sharing, the two networks enjoy a certain advantages in robustness and privacy. In particular,  compared with  individual  identification,  producing better estimates   in  collaborative  manners is very likely to be an absolute    cinch.  This guess was first proved false by \cite{TuSayed12}, since it found consensus networks can become unstable when all its nodes exhibit   stable behaviors in individual estimation processes. But at the same time, it showed that stability of the individual LMS  always infers stability of the diffusion LMS.
So, to some extent,   diffusion networks are more stable than  consensus ones. It was  confirmed again   in \cite{guo2018} recently by considering the normalized least mean squares (NLMS).
 Establishing a cooperative information condition, \cite{guo2018} concluded that the diffusion NLMS could track parameters effectively when none of the   local data provides sufficient information for individual identification. Almost all the existing literatures on the diffusion LMS-type algorithms suggest diffusion networks behave superiorly to non-cooperative schemes (see \cite{nos2015}, \cite{TuSayed12}, \cite{guo2018}).
Interestingly, as regard to the diffusion RLS, we cannot take it
for granted.



The diffusion RLS was proposed in \cite{Sayed08}, which  discussed a typical scenario attaining
bounded  mean-square errors. At  each node $i$,   the  data is required to be    independent and     tend towards steady that matrix $E P^{-1}_{k,i}$  becomes constant for all large time $k$. These constraints  are retained in other relevant  studies \cite{ba2011}, \cite{CLS2007}, \cite{Sayed08}, \cite{lll2014}, \cite{mat2012}, \cite{sayed2006} simply to make the problem   tractable.
However, for a variety of reasons,   connections between  data might be inevitable. More importantly,
\begin{eqnarray}\label{exci}
\lambda_{min}\left(P_{k,i}^{-1}\right)\rightarrow+\infty,
\end{eqnarray}
intuitively generates more informative  excitation signals than those for steady $P_{k,i}^{-1}$.
So what conclusions will survive, if the
 data utilized for estimation   admits no such  constraints?  Digging into this case, connections between the diffusion RLS and the non-cooperative RLS   are brought to the surface.

Indeed, for scalar unknown parameters,  the idea that cooperations among nodes  through diffusion networks help to promote estimation performances  is verified as expected here, the conclusion  for  high dimensional parameters  turns out to be quite different.  Opposite to  \cite{guo2018}, when parameters are vectors,   our results suggest that the convergence of the individual  RLS to true parameters at every node  cannot even guarantee the stability of
the diffusion RLS in a connected network, 
 let alone the identification task.

To be more precise, for a linear regression model with a scalar unknown parameter,     we find the necessary and sufficient condition on the regressor data, in a  cooperation form,  to guarantee the convergence of the diffusion RLS to the true parameter,   in the sense of the strong consistency and mean-square convergence. This critical condition degenerates to the necessary and sufficient condition of the    above two  convergences for the   individual  RLS,  when the underlying network  has  only one node. But  this  critical  convergence condition  can no longer be extended here in the high dimensional parameter case. Worse still, the  cooperation of the nodes   in a connected network might cause a  diverging  error   even \dref{exci} holds for every node $i$, which means the individual RLS  at each node, if is employed, tending to true parameters  \cite{hd76}, \cite{lai79}.
As a supplement, we   prove that the diffusion Robbins-Monro (RM)  algorithm could  achieve the  strong consistency and  mean-square convergence simultaneously,   when  regressor data  fails the diffusion RLS   for high dimensional   parameters. The two convergence rates of  the diffusion RM are explicitly derived. 

The rest of the paper is organized as follows. In the next
section, we  present the main theorems with the
proofs given in Sections \ref{gsta}--\ref{proofls2}.
The concluding remarks are included in Section
\ref{concluding}.

\section{   Main Results  }\label{MR}
Consider a  network consisting of $n$ nodes that trying to identify an unknown parameter in a collaborative manner.
At time $k$, each node $i$ observes 
a noisy signal $y_{k,i}\in \mathbb{R}$ and a  data signal  $\phi_{k,i}\in\mathbb{R}^{m}$. This process is
described by a stochastic linear regression model
\begin{equation}\label{model}
y_{k,i}=\theta^{\tau}\phi_{k,i}+\varepsilon_{k,i},\quad k\geq 0,\,\, i=1,\ldots,n,
\end{equation}
where $(\cdot)^{\tau}$ denotes the transpose operator, 
$\varepsilon_{k,i}$ is a scalar noise sequence and $\theta\in\mathbb{R}^{m}$ is an unknown  deterministic parameter.

Let  the network topology be           depicted   by a directed weighted graph $\mathcal{G}=(\mathcal{V},\mathcal{E},\mathcal{A})$, where $\mathcal{V}=\lbrace 1,2,\ldots,n\rbrace$ is the set of the nodes and $\mathcal{E}\subseteq \mathcal{V}\times \mathcal{V}$ is the set of the edges that any $(i,j)\in \mathcal{E}$  means $\mathcal{G}$ contains a directed path from $j$ to $i$.
The structure of the graph $\mathcal{G}$ is described
by  the weighted adjacency
matrix  $\mathcal{A}=\lbrace a_{ij}\rbrace_{n\times n}$, where $a_{ij}>0$ for  $(i,j)\in \mathcal{E}$ and $a_{ij}=0$ otherwise. 

We employ the adapt-then-combine (ATC) diffusion
strategy for the estimation algorithm, which is  recursively defined for each node $i$ by
\begin{enumerate}
\item  Adaption:
\begin{eqnarray}
\beta_{k+1,i}=\theta_{k,i}+L_{k,i}(y_{k,i}-\theta_{k,i}^{\tau}\phi_{k,i})\nonumber
\end{eqnarray}
with initial estimate $\theta_{0,i}\in \mathbb{R}^m$,   
where
$L_{k,i}\in\mathbb{R}^{m}$ is  to be designed  based on data $\phi_{0,i},\ldots,\phi_{k,i}$.
\item  Combination:
\begin{eqnarray}
\theta_{k+1,i}=\sum_{j=1}^{n}a_{ij}\beta_{k+1,i}.\nonumber
\end{eqnarray}
\end{enumerate}
Denote $\tilde\theta_{k,i}\triangleq\theta_{k,i}-\theta$, then
\begin{eqnarray}
\widetilde\Theta_{k+1}=(\mathcal{A}\otimes I_{m})(I_{mn}-F_{k})\widetilde\Theta_{k}+(\mathcal{A}\otimes I_{m})L_{k}V_{k},\nonumber
\end{eqnarray}
where
\begin{eqnarray}
\widetilde\Theta_{k}&\triangleq &{\mbox{col}\lbrace \tilde\theta_{k,1},\ldots,\tilde\theta_{k,n}\rbrace},\nonumber\\
 L_{k}&\triangleq &{\mbox{diag}\left\lbrace L_{k,1},\ldots,L_{k,n}\right\rbrace},\nonumber\\
\Phi_{k}&\triangleq &{\mbox{diag}\lbrace \phi_{k,1},\ldots,\phi_{k,n}\rbrace},\nonumber\\
 V_{k}&\triangleq &{\mbox{col}\lbrace\varepsilon_{k,1},\ldots,\varepsilon_{k,n}\rbrace},\nonumber\\
  F_{k}&\triangleq&{L_{k}\Phi_k^{\tau}}.
 \nonumber
\end{eqnarray}
Different $\lbrace L_{k,i}\rbrace$ result in variant types of adaptive algorithms, like the RLS, LMS and Kalman filtering. Since the parameter to be identified is time-invariant, we focus on the RLS and the Robbins-Monro algorithm.

\begin{remark}
Another well studied diffusion scheme  is  the combine-then-adapt (CTA) rule (see \cite{LSayed08}, \cite{TuSayed12}). Since the two strategies  are essentially the same for our problem, we only study the ATC diffusion strategy. All the results in this paper still hold for the CTA diffusion strategy.
\end{remark}


\subsection{Diffusion Recursive Least-Squares Algorithm}
In this section, we apply the RLS algorithm to estimate the unknown parameter $\theta$    based on   the ATC diffusion
strategy. That is,  $\lbrace L_{k,i}; k\geq 0, 1\leq i\leq n\rbrace$ are designed as
\begin{eqnarray}
\left\{
\begin{array}{l}
L_{k,i}=P_{k+1,i}\phi_{k,i}=\frac{P_{k,i}\phi_{k,i}}{1+\phi_{k,i}^{\tau}P_{k,i}\phi_{k,i}}\\
P_{k+1,i}^{-1}=I_m+\sum_{j=0}^{k}\phi_{j,i}\phi_{j,i}^{\tau}
\end{array}.
\right.\nonumber
\end{eqnarray}
\subsubsection{A Critical Convergence Theorem} We analyze  the estimation performance of the diffusion RLS algorithm  under
\begin{description}

\item[A1] $\mathcal{A}$ is an  irreducible and aperiodic doubly stochastic matrix with $\mathcal{A}^{\tau}\mathcal{A}$ being irreducible.
\item[A2]
The noises $\lbrace (\varepsilon_{k,1},\ldots,\varepsilon_{k,n})^{\tau}\rbrace_{k\geq 0}$ are mutually independent  and  for each $i=1,\ldots,n$,
$$
E\varepsilon_{k,i}=0,\, \forall k\geq 0\quad \mbox{and}\quad  \sup_{k\geq 1}E\varepsilon_{k,i}^2<M,
$$
where $M>0$ is a constant.

\item[A3] $ \phi_{k,i}, i=1,\ldots,n, k\geq 1$ are non-random constants.

\end{description}
\begin{remark}
If  graph $\mathcal{G}$ is undirected, connected
and containing a self-loop at each node, then it corresponds to a special case of  Assumption $A1$. See the  network topology of \cite{guo2018}.
\end{remark}

Recalling the well-known results  \cite[Theorem 1]{hd76} and \cite[Theorem 3.1]{lai79} on the least-squares (LS) estimator, we know that under Assumptions A2--A3,
for each single node $i$, if $\inf_{k\geq 0}E\varepsilon_{k,i}^2>0$, then
\begin{eqnarray*}
\hat\theta_{k,i}\stackrel{a.s.}{\longrightarrow}  \theta \quad \mbox{and}\quad E(  \hat \theta_{k,i}- \theta)^2\rightarrow 0
\end{eqnarray*}
are both equivalent to
\begin{eqnarray}\label{qiyi}
\lambda_{min}\left(P_{k,i}^{-1}\right)\rightarrow+\infty,
\end{eqnarray}
where  for  any initial $\hat\theta_{0,i}$ and  $k\geq 0$,
\begin{eqnarray}\label{iconvergence}
\hat\theta_{k+1,i}=\hat\theta_{k,i}+L_{k,i}(y_{k,i}-\phi_{k,i}^{\tau}\hat\theta_{k,i}).
\end{eqnarray}

Let $\|\cdot\|$ denotes the spectral norm of a matrix.
The two convergences are  now derived at every node     in a collaborative manner when the  unknown parameter  is a  scalar.


\begin{theorem}\label{ls1}
Let $m=1$.  Under Assumptions A1--A3,
 \begin{eqnarray}\label{lsconergence}
\|\widetilde\Theta_{k}\|\stackrel{a.s.}{\longrightarrow} 0\,\,\,\mbox{and}\,\,\, E\|\widetilde\Theta_{k}\|^2\rightarrow0\quad \mbox{as} \,\,k\rightarrow+\infty
\end{eqnarray}
 for any  initial $\Theta_0\in \mathbb{R}^n$, if and only if
   \begin{eqnarray}\label{Pinfi}
\lim_{k\rightarrow+\infty}\sum_{i=1}^nP_{k+1,i}^{-1}=+\infty.
\end{eqnarray}
\end{theorem}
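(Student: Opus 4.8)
The plan is to exploit the scalar structure $m=1$, under which $\mathcal{A}\otimes I_m=\mathcal{A}$ and $I_{mn}-F_k$ is the diagonal matrix with entries $1-f_{k,i}=P_{k+1,i}/P_{k,i}\in(0,1]$, where $f_{k,i}=P_{k+1,i}\phi_{k,i}^2$. First I would unroll the recursion as $\widetilde\Theta_k=\Psi(k,0)\widetilde\Theta_0+\sum_{l=0}^{k-1}\Psi(k,l+1)\mathcal{A}L_lV_l$, where $\Psi(k,l)\triangleq\mathcal{A}(I_n-F_{k-1})\cdots\mathcal{A}(I_n-F_l)$ is the homogeneous transition matrix and $\Psi(k,k)=I_n$. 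Since the $\phi_{k,i}$ are nonrandom (A3) and the noises are mean-zero and independent (A2), the two summands are, respectively, a deterministic term and a sum of independent mean-zero vectors, so the homogeneous and the stochastic parts can be treated separately; convergence for every initial condition is then equivalent to $\|\Psi(k,0)\|\to0$ together with vanishing of the stochastic part.

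\textbf{Spectral decomposition.} Because $\mathcal{A}$ is doubly stochastic, $\|\mathcal{A}\|\le1$ and $\mathcal{A}$ leaves invariant the orthogonal splitting $\mathbb{R}^n=\mathrm{span}\{\mathbf{1}\}\oplus\mathbf{1}^\perp$, acting as the identity on $\mathbf{1}$ and, thanks to $\mathcal{A}^\tau\mathcal{A}$ being irreducible (hence a symmetric positive semidefinite doubly stochastic matrix with a simple top eigenvalue $1$), as a strict contraction of modulus $\rho<1$ on $\mathbf{1}^\perp$. Writing $\widetilde\Theta_k^h=\bar\theta_k\mathbf{1}+\delta_k$ with $\mathbf{1}^\tau\delta_k=0$ and using $\mathbf{1}^\tau\mathcal{A}=\mathbf{1}^\tau$, the consensus mode obeys $\bar\theta_{k+1}=\big(1-\tfrac1n\sum_i f_{k,i}\big)\bar\theta_k-\tfrac1n\sum_i f_{k,i}\delta_{k,i}$, while the disagreement $\delta_k$ is contracted by $\rho$ at every step and forced only by terms of order $\|F_k\|(|\bar\theta_k|+\|\delta_k\|)$. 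The arithmetic fact driving everything is the chain of equivalences $\sum_i P_{k+1,i}^{-1}\to+\infty\iff\sum_k\sum_i f_{k,i}=+\infty\iff\prod_k\big(1-\tfrac1n\sum_i f_{k,i}\big)=0$, valid since $f_{k,i}\in[0,1)$ and $\sum_k f_{k,i}=+\infty$ exactly when $P_{k,i}^{-1}\to+\infty$.

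\textbf{Sufficiency.} Assuming \eqref{Pinfi}, I would first control $\delta_k$: the geometric contraction by $\rho$ against the $\|F_k\|$-forcing lets one show $\delta_k\to0$ and, more quantitatively, that the coupling term $\tfrac1n\sum_i f_{k,i}\delta_{k,i}$ is negligible compared with the damping accumulated by the consensus mode, so it can be absorbed into the product $\prod_k(1-\tfrac1n\sum_i f_{k,i})$, which tends to $0$ by the equivalence above; this yields $\bar\theta_k\to0$ and hence $\|\Psi(k,0)\|\to0$. For the stochastic part, independence and A2 give $E\|\sum_l\Psi(k,l+1)\mathcal{A}L_lV_l\|^2\le M\sum_l\|\Psi(k,l+1)\mathcal{A}L_l\|^2$; combining the contraction of $\Psi$ with the classical least-squares estimates for the weighted sums built from $L_{l,i}=P_{l+1,i}\phi_{l,i}$ (as in \cite{hd76}, \cite{lai79}) forces this to vanish, giving mean-square convergence, while a martingale convergence argument in the spirit of \cite{lai79} upgrades it to the almost-sure statement.

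\textbf{Necessity and the main obstacle.} For the converse I would argue contrapositively: if $\sum_i P_{k+1,i}^{-1}$ stays bounded, then $\sum_k f_{k,i}<\infty$ for every $i$, so $F_k\to0$ summably and $\Psi(k,0)$ converges to a nonzero matrix, the $\mathcal{A}^k$ factors tending to $\tfrac1n\mathbf{1}\mathbf{1}^\tau$ (using that $\mathcal{A}$ is irreducible and aperiodic) modulo a summable perturbation. Choosing $\widetilde\Theta_0$ with $\Psi(k,0)\widetilde\Theta_0\not\to0$ and using $E\widetilde\Theta_k=\Psi(k,0)\widetilde\Theta_0$ (noises mean-zero, $\phi$ nonrandom), one gets $E\|\widetilde\Theta_k\|^2\ge\|\Psi(k,0)\widetilde\Theta_0\|^2\not\to0$, so convergence fails for some initial condition. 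I expect the genuine difficulty to lie in the sufficiency direction, precisely in controlling the coupling term $\tfrac1n\sum_i f_{k,i}\delta_{k,i}$: this is where cooperation enters, since \eqref{Pinfi} may hold while all but one of the individual $P_{k,i}^{-1}$ stay bounded, so the damping of the consensus mode must be harvested from the aggregate excitation shared through $\mathcal{A}$ rather than node by node, and one must show the disagreement it generates does not accumulate fast enough to stall the product $\prod_k(1-\tfrac1n\sum_i f_{k,i})$.
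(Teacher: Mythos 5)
Your necessity half is essentially the paper's own route, and your arithmetic backbone is correct: since $1-f_{k,i}=P_{k+1,i}P_{k,i}^{-1}$, one has $\prod_{k=0}^{K}(1-f_{k,i})=P_{K+1,i}/P_{0,i}$, so the chain of equivalences you state is valid, and the paper likewise forms $\Pi_k=\prod_{i=k}^{0}\mathcal{A}(I_n-F_i)$, shows it is Cauchy using $\mathcal{A}^k\rightarrow\frac1n\mathbf{1}\mathbf{1}^{\tau}$ together with the summability $\sum_{k}\sum_i P_{k+1,i}\phi_{k,i}^2<+\infty$, and identifies the limit as $\mathbf{1}\cdot(\mu_1,\ldots,\mu_n)$. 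Logically, exhibiting one $\Theta_0$ for which mean-square convergence fails does suffice to break the ``for any initial'' statement. But note that you assert, without proof, that the limit of $\Psi(k,0)$ is nonzero; this is the $\mu_i>0$ step, which the paper proves by an entrywise product bound $\min_{i,j}\Pi_{k+1}[i,j]\geq(1-\max_j P_{k+1,j}\phi_{k,j}^2)\min_{i,j}\Pi_k[i,j]$ started from a $\Pi_{k_2}$ with strictly positive entries. That detail is fillable, so the necessity side is a sketch of the paper's argument rather than a gap.

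The genuine gap is in sufficiency, and you name it yourself without resolving it: the control of the coupling between the consensus mode and the disagreement mode. The per-step bounds available to your decomposition are of the form $|\bar\theta_{k+1}|\leq(1-a_k)|\bar\theta_k|+a_k\|\delta_k\|$ and $\|\delta_{k+1}\|\leq\rho\|\delta_k\|+\|F_k\|\left(\sqrt{n}\,|\bar\theta_k|+\|\delta_k\|\right)$ with $a_k=\frac1n\sum_i f_{k,i}$, and these do \emph{not} close when $\max_i f_{k,i}$ is not small: if a single node has $f_{k,1}$ near $1$ at selected times (which is exactly the regime permitted by \dref{Pinfi}, where only one node may be informative), the forcing injected into $\mathbf{1}^{\perp}$ is of order $|\bar\theta_k|$, i.e., comparable to the consensus energy itself, and the two modes can swap energy at each step, so a Lyapunov combination $|\bar\theta_k|+c\|\delta_k\|$ does not contract by $1-\epsilon a_k$ uniformly in $a_k\in[0,1)$. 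The paper's entire technical weight sits precisely here, in Lemma \ref{gjb2} and Corollary \ref{lsl}: the uniform spectral estimate $\lambda_{max}(I(c)\mathcal{A}^{\tau}\mathcal{A}I(c))\leq 1-s\sum_{i=1}^n(1-c_i^2)$ with $c_i=P_{k+1,i}P_{k,i}^{-1}$, proved by a nontrivial contradiction argument that exploits irreducibility of $\mathcal{A}^{\tau}\mathcal{A}$ along a path visiting all nodes; this yields the one-step norm inequality $\mbox{tr}(\Lambda_{k+1})\leq\left(1-s\sum_i(1-(P_{k+1,i}P_{k,i}^{-1})^2)\right)\mbox{tr}(\Lambda_k)+nM\sum_iP_{k+1,i}^2\phi_{k,i}^2$, after which everything follows from the elementary Lemma \ref{bkl} and the summability $\sum_k\sum_iP_{k+1,i}^2\phi_{k,i}^2\leq\sum_iP_{0,i}$. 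Nothing in your sketch substitutes for that estimate. A secondary soft spot: your almost-sure upgrade via ``a martingale convergence argument'' is not routine, because the unrolled stochastic term $\sum_{l<k}\Psi(k,l+1)\mathcal{A}L_lV_l$ has weights depending on $k$ and is not a martingale in $k$; the paper instead applies a Robbins--Siegmund-type result (Lemma \ref{ty}) to the conditional drift of $\|\widetilde\Theta_k\|^2$, which again requires the same spectral lemma.
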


\begin{remark}\label{r1}
(i) Discussions on the necessity  of Theorem \ref{ls1}:\\
(a) if \dref{Pinfi} fails,  as proved in Section \ref{gsta},  any initial values $\lbrace \theta_{0,i}, 1\leq i\leq n\rbrace$ except the ones satisfying $\sum_{i=1}^{n}\mu_i(\theta_{0,i}-\theta)=0$
will lead to
\begin{eqnarray}\label{fs}
\liminf_{k\rightarrow+\infty} E\|\widetilde\Theta_{k}\|^2>0\quad\mbox{and}\quad \|\widetilde\Theta_{k}\|\stackrel{p}{\nrightarrow} 0,\nonumber
\end{eqnarray}
where $\mu_1,\ldots,\mu_n> 0$ are some constants determined by data $\lbrace \phi_{k,i}\rbrace$ and matrix $\mathcal{A}$.\\
(b) when the  noises and data satisfy
\begin{eqnarray}\label{Ee>0}
\left\{
\begin{array}{ll}
E\varepsilon_{k,i}^2>0, \,\,\, \mbox{for all}\,\, k\geq 0, 1\leq i\leq n\\
E\varepsilon_{k,i}\varepsilon_{k,j}=0,\,\,\, \mbox{for all}\,\, k\geq 0, 1\leq i<j\leq n\\
\sum_{k=0}^{+\infty}\sum_{i=1}^{n}\phi_{k,i}^2\not=0
\end{array},
\right.
\end{eqnarray}
then given any initial $\Theta_0\in \mathbb{R}^n$ (including $\theta_{0,i}=\theta, i\in [1,n]$), \dref{lsconergence} is equivalent to \dref{Pinfi}. See  Appendix \ref{AppB}.\\
(ii) As for the sufficient part of Theorem \ref{ls1},    the convergence rate (see \dref{sumPThe})   of the estimation error satisfies
\begin{eqnarray}\label{ratels}
\sum_{k=0}^{+\infty}\sum_{i=1}^{n}(1-(P_{k+1,i}P_{k,i}^{-1})^2)\|\widetilde\Theta_{k}\|^2<+\infty,\quad\mbox{a.s.}\nonumber
\end{eqnarray}
with $\sum_{k=0}^{+\infty}\sum_{i=1}^{n}(1-(P_{k+1,i}P_{k,i}^{-1})^2)=+\infty$.\\
\end{remark}

We come to an analogous    conclusion on the strong consistency   of Theorem \ref{ls1}  when   data
$$\Phi_{k}= {\mbox{diag}\lbrace \phi_{k,1},\ldots,\phi_{k,n}\rbrace},\quad k\geq 0
$$
 is a random sequence.
Assume
\begin{description}

\item[A2']
$\lbrace (\varepsilon_{k,1},\ldots,\varepsilon_{k,n})^{\tau}\rbrace_{k\geq 0}$  are mutually independent and   there is a constant $M>0$  such that for all $i\in[1,n]$,
\begin{eqnarray*}
\left\{
\begin{array}{l}
E(\varepsilon_{k,i}|\Phi_{j}, 0\leq j\leq k)=0\\
\sup\limits_{k\geq 1}E(\varepsilon_{k,i}^2|\Phi_{j}, 0\leq j\leq k)\leq M,\quad \mbox{a.s.}.
\end{array}
\right.
\end{eqnarray*}
\end{description}

\begin{theorem}\label{lsr}
Under Assumptions A1 and A2', for any  initial $\Theta_0\in \mathbb{R}^n$, on set $\lbrace \lim_{k\rightarrow+\infty}\sum_{i=1}^nP_{k+1,i}^{-1}=+\infty\rbrace$,
\begin{eqnarray}\label{lsconergence1}
\|\widetilde\Theta_{k}\|\stackrel{a.s.}{\longrightarrow} 0,\quad \quad{as}\,\,k\rightarrow+\infty.\nonumber
\end{eqnarray}
\end{theorem}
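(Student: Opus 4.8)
The plan is to treat the squared error $V_k:=\|\widetilde\Theta_k\|^2$ as an almost--supermartingale, apply the Robbins--Siegmund lemma to get convergence of $V_k$, and then upgrade ``$V_k$ converges'' to ``$V_k\to0$'' by combining the spectral gap of $\mathcal A$ with the persistence of excitation encoded in \dref{Pinfi}. Throughout I take $m=1$, so that $\Phi_k,L_k,F_k$ are diagonal; writing $f_{k,i}:=L_{k,i}\phi_{k,i}=\phi_{k,i}^2P_{k+1,i}$ one has $1-f_{k,i}=P_{k+1,i}P_{k,i}^{-1}\in(0,1]$ and $I_n-F_k=\mathrm{diag}\{P_{k+1,i}P_{k,i}^{-1}\}$. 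Let $\mathcal F_k:=\sigma\{\Phi_0,\dots,\Phi_k,V_0,\dots,V_{k-1}\}$, under which $\widetilde\Theta_k,F_k,L_k$ are measurable and, by A2', $E[V_k\mid\mathcal F_k]=0$ and $E[\varepsilon_{k,i}^2\mid\mathcal F_k]\le M$. Since $\mathcal A$ is doubly stochastic it is a convex combination of permutation matrices, so $\|\mathcal A\|\le1$. Expanding $\widetilde\Theta_{k+1}=\mathcal A(I_n-F_k)\widetilde\Theta_k+\mathcal AL_kV_k$ and taking $E[\cdot\mid\mathcal F_k]$, the cross term drops out and I would obtain
\begin{eqnarray}
E[V_{k+1}\mid\mathcal F_k]&\le&\|(I_n-F_k)\widetilde\Theta_k\|^2+M\sum_{i=1}^nL_{k,i}^2\nonumber\\
&=&V_k-\gamma_k+M\sum_{i=1}^nL_{k,i}^2,\nonumber
\end{eqnarray}
where $\gamma_k:=\sum_{i=1}^n\big(1-(P_{k+1,i}P_{k,i}^{-1})^2\big)\tilde\theta_{k,i}^2\ge0$.

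Next I would note that the gain energy is \emph{always} summable: with $a_{k,i}:=P_{k,i}^{-1}$ (increasing, $a_{0,i}=1$) one has $L_{k,i}^2=\phi_{k,i}^2P_{k+1,i}^2=(a_{k+1,i}-a_{k,i})/a_{k+1,i}^2\le\int_{a_{k,i}}^{a_{k+1,i}}t^{-2}\,dt$, whence $\sum_k\sum_iL_{k,i}^2\le n$. Thus the perturbation $M\sum_iL_{k,i}^2$ is surely summable, and the Robbins--Siegmund theorem yields that $V_k$ converges a.s.\ to some finite $V_\infty\ge0$ and $\sum_k\gamma_k<\infty$ a.s.; since $1-(1-f_{k,i})^2\ge f_{k,i}\ge f_{k,i}^2$, this already gives $\sum_k\sum_if_{k,i}\tilde\theta_{k,i}^2<\infty$ and $\sum_k\|F_k\widetilde\Theta_k\|^2<\infty$ (so $\|F_k\widetilde\Theta_k\|\to0$), matching the rate of Remark \ref{r1}(ii). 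A parallel conditional Borel--Cantelli argument applied to $\sum_kE[\|\mathcal AL_kV_k\|^2\mid\mathcal F_k]\le M\sum_k\sum_iL_{k,i}^2<\infty$ gives $\|\mathcal AL_kV_k\|\to0$ a.s.

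It remains to show $V_\infty=0$ on \dref{Pinfi}. I would first kill the disagreement component. Let $\Pi:=I_n-\tfrac1n\mathbf 1\mathbf 1^\tau$ and $\widetilde\Theta_k^\perp:=\Pi\widetilde\Theta_k$; since $\mathcal A\mathbf 1=\mathbf 1=\mathcal A^\tau\mathbf 1$, projecting the recursion gives $\widetilde\Theta_{k+1}^\perp=\mathcal A\widetilde\Theta_k^\perp-\mathcal A\Pi F_k\widetilde\Theta_k+\Pi\mathcal AL_kV_k$. Because $\mathcal A^\tau\mathcal A$ is an irreducible doubly stochastic symmetric matrix, its top eigenvalue $1$ is simple, so $\rho:=\|\mathcal A\Pi\|=\sqrt{\lambda_2(\mathcal A^\tau\mathcal A)}<1$ by A1. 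Hence $\|\widetilde\Theta_{k+1}^\perp\|\le\rho\|\widetilde\Theta_k^\perp\|+p_k$ with $p_k:=\|F_k\widetilde\Theta_k\|+\|\mathcal AL_kV_k\|\to0$, and the contraction forces $\|\widetilde\Theta_k^\perp\|\to0$ a.s. Writing $\bar\theta_k:=\tfrac1n\mathbf 1^\tau\widetilde\Theta_k$, the identity $V_k=n\bar\theta_k^2+\|\widetilde\Theta_k^\perp\|^2$ then yields $\bar\theta_k^2\to V_\infty/n$, and since $\tilde\theta_{k,i}-\bar\theta_k\to0$ also $\tilde\theta_{k,i}^2\to V_\infty/n$ for every $i$.

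Finally I would combine this with the excitation. On \dref{Pinfi} at least one node $i^*$ satisfies $a_{k,i^*}\to\infty$; then $\sum_kf_{k,i^*}=\sum_k(1-a_{k,i^*}/a_{k+1,i^*})=\infty$ (otherwise $\prod_k a_{k,i^*}/a_{k+1,i^*}>0$ would contradict $a_{k,i^*}\to\infty$), so $\sum_k\sum_if_{k,i}=\infty$. If $V_\infty>0$, then $\tilde\theta_{k,i}^2\ge V_\infty/(2n)$ for all large $k$ and all $i$, giving $\sum_k\sum_if_{k,i}\tilde\theta_{k,i}^2\ge\tfrac{V_\infty}{2n}\sum_{k\ge K}\sum_if_{k,i}=\infty$, contradicting $\sum_k\sum_if_{k,i}\tilde\theta_{k,i}^2<\infty$. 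Hence $V_\infty=0$ on \dref{Pinfi}, i.e.\ $\|\widetilde\Theta_k\|\to0$ a.s.\ there. \emph{The main obstacle} is exactly this last coupling: the Robbins--Siegmund step contracts the error only at nodes that are themselves excited, and it is the scalar structure ($m=1$, diagonal $F_k$) together with the spectral gap of $\mathcal A$ that allows the single persistently excited node to drive the common consensus error of the whole network to zero --- the mechanism that, as the introduction stresses, fails for vector parameters.
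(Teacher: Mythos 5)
Your proof is correct, but it takes a genuinely different route from the paper's. The paper proves Theorem \ref{lsr} with a single \emph{cooperative} drift inequality: conditioning on $\mathcal{G}_{k}=\sigma\lbrace \Phi_{i}, V_{l}, 0\leq i\leq k, 0\leq l\leq k-1\rbrace$ and invoking Corollary \ref{lsl} (the $m=h=1$ case of the technical Lemma \ref{gjb2}), it obtains $E[\widetilde\Theta_{k+1}^{\tau}\widetilde\Theta_{k+1}|\mathcal{G}_{k}]\leq \|\widetilde\Theta_{k}\|^2-s\sum_{i=1}^{n}(1-(P_{k+1,i}P_{k,i}^{-1})^2)\|\widetilde\Theta_{k}\|^2+nM\sum_{i=1}^{n}P_{k+1,i}^2\phi_{k,i}^2$, so the decrement is the \emph{total} error weighted by the \emph{total} excitation; then \cite[Lemma 1.2.2]{ch02} gives a.s.\ convergence of $\|\widetilde\Theta_{k}\|^2$ together with the summability \dref{sl}, and since \dref{Pinfi} forces $\sum_{k}\sum_{i}(1-(P_{k+1,i}P_{k,i}^{-1})^2)=+\infty$ pathwise (the argument behind \dref{sumPphiinf}), the limit must vanish on that set via the inclusion $S\subset S'$. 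You instead use only $\|\mathcal{A}\|\leq 1$, so your Robbins--Siegmund decrement $\gamma_k=\sum_i(1-(P_{k+1,i}P_{k,i}^{-1})^2)\tilde\theta_{k,i}^2$ is merely node-weighted, and you recover the cooperation separately: the spectral gap $\|\mathcal{A}\Pi\|=\sqrt{\lambda_2(\mathcal{A}^{\tau}\mathcal{A})}<1$ (which rests on exactly the irreducibility of $\mathcal{A}^{\tau}\mathcal{A}$ in A1 that also drives Lemma \ref{gjb2}; aperiodicity is not even needed since $\mathcal{A}^{\tau}\mathcal{A}$ is positive semidefinite) plus the vanishing perturbations $\|F_k\widetilde\Theta_k\|\rightarrow 0$ and $\|\mathcal{A}L_kV_k\|\rightarrow 0$ force consensus, all per-node errors converge to the common value $V_\infty/n$, and a single persistently excited node contradicts $\sum_k\gamma_k<+\infty$ unless $V_\infty=0$. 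Every step checks: the gain-energy bound $\sum_k\sum_iL_{k,i}^2\leq n$ is the paper's \dref{wsj} in integral form, the noise term is even simpler than conditional Borel--Cantelli (its total expectation is bounded by $Mn$), and the random index $i^{*}$ is harmless since the final argument is pathwise. What your route buys is elementarity and transparency: it bypasses Lemma \ref{gjb2} entirely and isolates the mechanism (consensus plus one excited node) that, as the paper stresses, is special to $m=1$. What it loses is quantitative strength and reuse: Lemma \ref{gjb2} with its explicit constant $s$ is shared infrastructure for Theorems \ref{ls1} and \ref{lms}, and it yields the stronger summability \dref{sumPThe} in which the weight is the full $\|\widetilde\Theta_k\|^2$; your summable series carries only $\tilde\theta_{k,i}^2$, so your aside that it ``matches the rate of Remark \ref{r1}(ii)'' overstates slightly --- your version is strictly weaker, though this is irrelevant to the theorem itself. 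One cosmetic point: your filtration $\mathcal{F}_k$ includes past noises while A2$'$ literally conditions only on $\Phi_j$, $0\leq j\leq k$; but the paper's own proof adopts the identical reading of A2$'$, so this is not a discrepancy.
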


The proof of Theorem \ref{lsr}   is similar to that of Theorem \ref{ls1} and given in Appendix \ref{AppB}. The above two theorems suggest   that   when the unknown parameter is a scalar,     the informative data of one single node is sufficient to guarantee the strong consistency (mean-square convergence) of the diffusion RLS via the connectivity of the underlying network. 



\subsubsection{  Diffusion Strategy Could Fail the Convergence}
When the unknown parameter is of high dimension, a little surprising result emerges, indicating that a diffusion strategy could play a destructive role, if the network topology is strongly connected:
\begin{description}
\item[A1'] $\mathcal{A}$ is irreducible.
\end{description}

\begin{theorem}\label{ls2'}
Let $m>1$ and  Assumptions A1'and A2 hold.   If  $\|E\widetilde\Theta_{0}\|\not=0$, then there is a series of data $\lbrace \Phi_{k}\rbrace_{k=0}^{+\infty}$ satisfying
\begin{eqnarray}\label{Pinfinity}
\lim_{k\rightarrow+\infty}\lambda_{min}(P_{k,i}^{-1})=+\infty,\quad i=1,\ldots,n,
\end{eqnarray}
such that $\sup_{k\geq 0}E\|\widetilde\Theta_{k}\|^2=+\infty$.
\end{theorem}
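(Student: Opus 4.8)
The plan is to turn this probabilistic, high-dimensional claim into a deterministic product-of-matrices problem and then construct the data by hand. Since the data $\{\Phi_k\}$ will be chosen as deterministic constants, the gains $L_k$ and $F_k=L_k\Phi_k^{\tau}$ are deterministic, and Assumption A2 gives $EV_k=0$. Taking expectations in the error recursion, the mean error $\Psi_k:=E\widetilde\Theta_k$ then obeys the homogeneous dynamics $\Psi_{k+1}=(\mathcal{A}\otimes I_m)(I_{mn}-F_k)\Psi_k$, with $\Psi_0=E\widetilde\Theta_0$ and $\|\Psi_0\|\ne 0$ by hypothesis. Because $E\|\widetilde\Theta_k\|^2\ge\|E\widetilde\Theta_k\|^2=\|\Psi_k\|^2$, it suffices to exhibit data with $\lambda_{\min}(P_{k,i}^{-1})\to+\infty$ for every $i$ and $\sup_k\|\Psi_k\|=+\infty$. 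Thus the noise and the second-moment statement are disposed of at the outset, and everything reduces to a deterministic construction.

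The mechanism I would exploit, and the reason $m>1$ is essential, rests on the RLS identity $I_m-F_{k,i}=P_{k+1,i}P_{k,i}^{-1}=I_m-P_{k+1,i}\phi_{k,i}\phi_{k,i}^{\tau}$. Its eigenvalues are $1$ (with multiplicity $m-1$) and $1/(1+\phi_{k,i}^{\tau}P_{k,i}\phi_{k,i})\in(0,1]$, yet the matrix is \emph{non-normal}, so for $m>1$ its spectral norm can exceed $1$; for $m=1$ it is a scalar in $(0,1]$, which is precisely why Theorem~\ref{ls1} gives convergence. Concretely, for large excitation magnitude $I_m-F_{k,i}$ approaches the oblique projection $I_m-\frac{P_{k,i}vv^{\tau}}{v^{\tau}P_{k,i}v}$ onto $v^{\perp}$ along $P_{k,i}v$, which is expanding exactly when $v$ is not an eigenvector of $P_{k,i}$. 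Interleaving such expanding factors with the coupling $\mathcal{A}\otimes I_m$ is what can drive $\|\Psi_k\|$ up even when $\rho(\mathcal{A})=1$. (If $\rho(\mathcal{A})>1$ the construction is morally easy—keep the excitations weak enough that $I_{mn}-F_k\approx I_{mn}$ while $\lambda_{\min}(P_{k,i}^{-1})\to\infty$, and $(\mathcal{A}\otimes I_m)^k$ already amplifies—so the substantive case, and the one that forces $m>1$, is $\rho(\mathcal{A})=1$, e.g. $\mathcal{A}$ doubly stochastic.)

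I would build the data over consecutive blocks. Within each block, at every node $i$ I first excite along a coordinate direction to render $P_{k,i}$ anisotropic, and then excite along a second, non-eigen direction, so that the resulting update is a genuinely oblique (norm $>1$) projection; this is the network realization of the classical fact that a product of two oblique projections onto distinct lines has spectral radius $>1$. I would cycle the excited directions through all $m$ coordinates across blocks, so that each $P_{k,i}^{-1}$ accumulates energy in every direction and $\lambda_{\min}(P_{k,i}^{-1})\to+\infty$. Using the positive Perron eigenvector of the irreducible $\mathcal{A}$ to choose the directions, the aim is that each block product $\Pi_l:=\prod(\mathcal{A}\otimes I_m)(I_{mn}-F_k)$ leaves a fixed cone $\mathcal{C}$ invariant and expands it, $\|\Pi_l x\|\ge\gamma\|x\|$ for $x\in\mathcal{C}$ with some $\gamma>1$ independent of $l$. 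After a short preamble that steers the prescribed $\Psi_0\ne0$ into $\mathcal{C}$, successive blocks give $\|\Psi_{K_l}\|\ge\gamma^{\,l}\|\Psi_{K_0}\|\to\infty$ along block boundaries, hence $\sup_k\|\Psi_k\|=+\infty$ and the theorem.

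The hard part is the uniform per-block expansion. The factors $I_{mn}-F_k$ are not periodic: $P_{k,i}$ keeps evolving and becomes ever more ill-conditioned as $\lambda_{\min}(P_{k,i}^{-1})\to\infty$, so I cannot simply iterate one amplifying matrix. I would resolve this by passing to the scale-invariant large-excitation limit, in which each update is an oblique projection depending only on the \emph{shape} of $P_{k,i}$, proving a fixed cone-expansion bound there, and then transferring it to finite excitations by taking the block magnitudes large enough and using continuity of $I_{mn}-F_k$ in the excitation magnitude. The remaining tasks—keeping $\mathcal{C}$ invariant under the given arbitrary irreducible $\mathcal{A}$ (handled through its Perron structure), steering the prescribed $\Psi_0$ into $\mathcal{C}$, and simultaneously forcing every $\lambda_{\min}(P_{k,i}^{-1})$ to infinity through the coordinate-cycling excitation—are comparatively routine once the uniform expansion $\gamma>1$ is secured.
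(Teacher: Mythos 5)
Your reduction is exactly the paper's: with deterministic $\{\Phi_k\}$ and Assumption A2, taking expectations gives $E\widetilde\Theta_k=\prod_{i=k-1}^{0}(\mathcal{A}\otimes I_m)(I_{mn}-F_i)E\widetilde\Theta_0$ (this is (\ref{ETheR})), and $E\|\widetilde\Theta_k\|^2\geq\|E\widetilde\Theta_k\|^2$ disposes of the noise, so everything rests on blowing up the deterministic mean dynamics. The mechanism you identify is also the right one and is precisely what powers the paper's Lemma \ref{12}: the update $I_m-P_{k+1,i}\phi_{k,i}\phi_{k,i}^{\tau}$ is a non-normal oblique projection whose norm exceeds one exactly when $m>1$ and the excitation direction is not an eigendirection of $P_{k,i}$; the paper first chooses $v_1$ to make $B_1$ anisotropic and then $v_2$ so that the term $-a_{lj^{*}}v_2^{\tau}D_1B_1v_2/(1+v_2^{\tau}B_1v_2)$ exceeds any prescribed $L$. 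Up to that point your proposal is sound.

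The gap is the central lemma you defer: the existence of a \emph{fixed} cone $\mathcal{C}$ that every block product leaves invariant and expands by a uniform $\gamma>1$. Two concrete problems. First, the justification you offer --- the positive Perron eigenvector of the irreducible $\mathcal{A}$ --- is not available: Perron--Frobenius cone invariance pertains to nonnegative matrices, whereas the factors $I_{mn}-F_k$ are oblique projections with sign-indefinite entries, so the interleaved products $(\mathcal{A}\otimes I_m)(I_{mn}-F_k)$ preserve no positivity structure and no invariant cone is exhibited. Second, uniformity across infinitely many blocks conflicts with your own side condition: forcing $\lambda_{\min}(P_{k,i}^{-1})\to+\infty$ in all $m$ directions keeps reshaping $P_{k,i}$, so your ``scale-invariant large-excitation limit'' is a \emph{different} family of oblique projections at every block; continuity transfers an expansion estimate for one fixed block but yields no $\gamma>1$ uniform over drifting shapes, and nothing is said about why the iterate stays in $\mathcal{C}$ rather than drifting toward the degenerate set where expansion fails (e.g.\ where the components feeding node $j^{*}$ vanish). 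That nondegeneracy bookkeeping is exactly where the paper's technical weight sits: the hyperplane family $\mathcal{P}_0,\ldots,\mathcal{P}_d$ and the polynomial-genericity Lemmas \ref{two1}--\ref{four} re-establish general position after every rank-one update, Lemma \ref{13} pumps $\lambda_{\min}(P_{k,i}^{-1})$ while preserving it, and Lemma \ref{12} then delivers \emph{arbitrarily large} one-block amplification (larger than $20(t_{k+1}+1)^4$ in Lemma \ref{gj}) --- which incidentally shows that a uniform $\gamma$ is both harder than and unnecessary for the conclusion. As it stands, your argument asserts, rather than proves, its only hard step.
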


More divergences of the diffusion RLS occur,    if the noises in Assumption A2 
are specified  by
\begin{description}
\item[A2'']  $\lbrace (\varepsilon_{k,1},\ldots,\varepsilon_{k,n})^{\tau}\rbrace_{k\geq 0}$ is an  i.i.d random sequence with a multivariate normal distribution $N(0,\Sigma)$.

\end{description}

\begin{theorem}\label{ls2}
Let $m>1$ and  Assumptions A1' and  A2'' hold.   If  $\|E\widetilde\Theta_{0}\|\not=0$, then there is a series of data $\lbrace \Phi_{k}\rbrace_{k=0}^{+\infty}$ satisfying \dref{Pinfinity}
such that \\
(i) for some set $D_0$  with $P(D_0)>0$,
\begin{eqnarray}\label{ls2as}
\sup_{k\geq 0}\|\widetilde\Theta_{k}\|=+\infty,\quad\mbox{a.s. on}~D_0;
\end{eqnarray}
(ii) for any $\varepsilon>0$,
\begin{eqnarray}\label{ls2p}
\limsup_{k\rightarrow+\infty}P(\|\widetilde\Theta_{k}\|>\varepsilon)>0.\nonumber
\end{eqnarray}
\end{theorem}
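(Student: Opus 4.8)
The plan is to reuse the deterministic regressor sequence furnished by Theorem~\ref{ls2'} and then exploit the exact Gaussianity of the error process under A2''. First I would note that A2'' is a special case of A2, so Theorem~\ref{ls2'} supplies a non-random data stream $\{\Phi_k\}$ obeying \dref{Pinfinity} for which $\sup_{k\ge0}E\|\widetilde\Theta_k\|^2=+\infty$; I take exactly this $\{\Phi_k\}$. Since the data are deterministic, the gain matrices $L_k$, the matrices $F_k$ and the one-step transition $\Psi_k=(\mathcal{A}\otimes I_m)(I_{mn}-F_k)$ are all deterministic, so unrolling $\widetilde\Theta_{k+1}=\Psi_k\widetilde\Theta_k+(\mathcal{A}\otimes I_m)L_kV_k$ expresses $\widetilde\Theta_k=m_k+G_k$, where $m_k=E\widetilde\Theta_k$ is deterministic and $G_k$ is a mean-zero linear image of the jointly Gaussian noises. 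Hence each $\widetilde\Theta_k$ is a Gaussian vector, and I would fix a subsequence $k_n$ along which $E\|\widetilde\Theta_{k_n}\|^2\to+\infty$.

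The core reduction is to a scalar Gaussian. For each $n$ let $w_n$ be a unit top eigenvector of the second-moment matrix $E[\widetilde\Theta_{k_n}\widetilde\Theta_{k_n}^{\tau}]$ and set $\zeta_n=w_n^{\tau}\widetilde\Theta_{k_n}$. Then $\zeta_n\sim N(\mu_n,s_n^2)$ with $\mu_n=w_n^{\tau}m_{k_n}$, and since the top eigenvalue dominates the average of the eigenvalues, $\mu_n^2+s_n^2=E\zeta_n^2\ge (mn)^{-1}E\|\widetilde\Theta_{k_n}\|^2\to+\infty$. Because $\|\widetilde\Theta_{k_n}\|\ge|\zeta_n|$, it suffices to prove $P(\sup_n|\zeta_n|=+\infty)>0$ and $\limsup_n P(|\zeta_n|>\varepsilon)>0$ for the one-dimensional Gaussians $\zeta_n$, and this is where I would split into two regimes according to whether the variances $s_n$ stay bounded.

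If some subsequence has $s_n\to+\infty$, then for every fixed $t$ one computes $P(|\zeta_n|>t)=\bar\Phi((t-\mu_n)/s_n)+\Phi((-t-\mu_n)/s_n)\to1$ (after passing to the limit of $\mu_n/s_n$ the two Gaussian tails add to $\Phi(r)+\Phi(-r)=1$), so $P(\sup_m|\zeta_m|>t)\ge\limsup_nP(|\zeta_n|>t)=1$ for all $t$, giving both conclusions at once. If instead $s_n$ is bounded, then $|\mu_n|\to+\infty$; thinning to a subsequence with $\mu_n^2\ge 16\,(\sup_ms_m^2)\ln n$ makes the Gaussian tail bound $P(|\zeta_n-\mu_n|>|\mu_n|/2)\le 2\exp(-\mu_n^2/8s_n^2)$ summable, so the first Borel--Cantelli lemma forces $|\zeta_n|\ge|\mu_n|/2\to+\infty$ almost surely, while for \dref{ls2p} the same tail estimate gives $P(|\zeta_n|>\varepsilon)\to1$. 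In either regime $\sup_k\|\widetilde\Theta_k\|=+\infty$ on a set $D_0$ of positive probability, establishing \dref{ls2as}, and $\limsup_kP(\|\widetilde\Theta_k\|>\varepsilon)=1$, establishing \dref{ls2p}.

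I expect the main obstacle to be the passage from the second-moment blow-up $\sup_kE\|\widetilde\Theta_k\|^2=+\infty$ provided by Theorem~\ref{ls2'} to the pathwise statement \dref{ls2as}: a diverging second moment could a priori be produced either by a runaway deterministic bias $m_k$ or by an exploding noise variance, and possible cancellation between $m_k$ and $G_k$ must be ruled out. The eigenvector projection together with the bounded-versus-unbounded-variance dichotomy is precisely what handles both mechanisms uniformly; the only structural facts needed beyond Theorem~\ref{ls2'} are that the constructed data are non-random (so that $\widetilde\Theta_k$ is genuinely Gaussian) and the elementary anti-concentration and tail estimates for scalar Gaussians.
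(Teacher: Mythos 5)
Your proposal is correct: A2'' is indeed a special case of A2, so Theorem \ref{ls2'} supplies the deterministic data satisfying \dref{Pinfinity}, the error is then an affine image of jointly Gaussian noise (hence each $\widetilde\Theta_k$ is Gaussian), and your top-eigenvector projection combined with the bounded/unbounded-variance dichotomy closes both parts; in the unbounded regime the even simpler bound $P(|\zeta_n|\leq t)\leq 2t/(s_n\sqrt{2\pi})\rightarrow 0$ avoids extracting a limit of $\mu_n/s_n$, and the degenerate case $s_n=0$ is handled trivially. However, your route is genuinely different from the paper's. The paper does not invoke Theorem \ref{ls2'} as a black box: it returns to Lemma \ref{gj}, whose construction forces the \emph{mean} of one fixed coordinate, $R_k[m(l-1)+1,1]=E\widetilde\Theta_{k}[m(l-1)+1,1]$, to exceed $20(t_{k+1}+1)^4$ along a subsequence, and then applies two scalar-Gaussian estimates that are \emph{uniform in the unknown variance}: the anti-concentration bound \dref{st} gives $P\left(16(k+1)^{3}|\xi|<|E\xi|\right)\leq \frac{1}{\sqrt{2\pi}k^2}$ for every $\sigma$, so the explicit event $D_0=\bigcap_{k\geq 1}\left\lbrace |\widetilde\Theta_{k}[m(l-1)+1,1]|\geq |R_k[m(l-1)+1,1]|/(16(k+1)^{3})\right\rbrace$ has positive probability by a union bound and carries $\|\widetilde\Theta_{k}\|>k+1$ infinitely often, while part (ii) follows from the second estimate \dref{gl} with the constant $1-2/\sqrt{2\pi}$. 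Your argument replaces this variance-uniform anti-concentration by the dichotomy: you need only the qualitative blow-up $\sup_k E\|\widetilde\Theta_k\|^2=+\infty$, recover a diverging scalar second moment via the eigenvector projection ($\mu_n^2+s_n^2\geq (mn)^{-1}E\|\widetilde\Theta_{k_n}\|^2$), and finish with Gaussian tails and the first Borel--Cantelli lemma along a thinned subsequence. What you gain is modularity --- the probabilistic half proves the general fact that a deterministic-mean-plus-Gaussian process with exploding second moments explodes pathwise, with no growth-rate bookkeeping --- and strictly stronger conclusions: your $D_0$ has probability one and $\limsup_k P(\|\widetilde\Theta_k\|>\varepsilon)=1$, versus $P(D_0)>0$ and $1-2/\sqrt{2\pi}$ in the paper. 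What the paper gains is a single concrete exceptional event with no subsequence thinning and no case split, precisely because the rate $(t+1)^4$ from Lemma \ref{gj}, measured against the threshold $16(k+1)^3$, is what makes the union bound $\sum_k k^{-2}$ converge.
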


\begin{remark}\label{randomthe}
Although parameter $\theta$ is modeled as a deterministic vector here, Theorems \ref{ls2'} still holds for random parameter $\theta$. Furthermore, if $\theta$ has a normal distribution, then
Theorem \ref{ls2} can be derived as well. See Section \ref{proofls2}.
\end{remark}

\begin{remark}
Let parameter $\theta$,  data  $\{ \phi_{k,i}\}$ and  noises $\{\varepsilon_{k,i}\}$ in model \dref{model}   all be random. If $\theta$, independent of $\{ \phi_{k,i}\}$,  is Gaussian distributed  and $\{\varepsilon_{k,i}\}$ possess the standard normal distributions, then
in view of \cite{St77},
 for each single node $i$ and any initial value $\hat\theta_{0,i}$,
\begin{eqnarray}\label{77}
\left\lbrace \lambda_{min}\left(P_{k,i}^{-1}\right)\rightarrow+\infty \right\rbrace\subset\left\lbrace\lim_{t\rightarrow+\infty}\hat\theta_{k,i}=\theta\right\rbrace,\nonumber
\end{eqnarray}
where $\hat\theta_{k,i}$ is the individual RLS defined by \dref{iconvergence}. So, Remark \ref{randomthe} means in stochastic framework,
the diffusion strategy still possibly do a disservice to estimation.
In this sense, we might need a  stronger condition to ensure the strong consistency of the diffusion RLS, compared with the individual case.
\end{remark}

\subsection{Diffusion Robbins-Monro Algorithm}\label{B}
Now, we are going to seek an adaptive algorithm competent for distributed estimation, no matter the parameter to be identified is a scalar or a vector.
The diffusion RM   is a suitable candidate.   It  achieves the  strong consistency and  mean-square convergence simultaneously,
under the cooperative information condition below:

\begin{description}

\item[A3'] There are two constants $c>0, \alpha\in[0,\frac{1}{2})$ such that
\begin{eqnarray}\label{a3}
\inf_{k\geq 1}k^{\alpha}\lambda_{min}\left(E\left[\frac{1}{n}\sum_{i=1}^{n}\frac{\phi_{k,i}\phi_{k,i}^{\tau}}{1+\|\phi_{k,i}\|^2}\bigg|\mathcal{F}_{k-1}\right]\right)>c,\nonumber
\end{eqnarray}
where $\mathcal{F}_{k}\triangleq\sigma\lbrace \phi_{j,i},\varepsilon_{j,i},0\leq j\leq k, 1\leq i\leq n\rbrace$.
\end{description}
Alternatively,  denoting $$\lambda_k(h)\triangleq\lambda_{min}\left(E\left[\frac{1}{nh}\sum_{i=1}^{n}\sum_{j=k}^{k+h-1}\frac{\phi_{j,i}\phi_{j,i}^{\tau}}{1+\|\phi_{j,i}\|^2}\bigg|\mathcal{F}_{k-1}\right]\right),$$
where $h$ is a fixed positive integer,
a more useful condition  is
\begin{description}
\item[A3''] Regressr $\{\phi_{k,i}\}$ satisfies\\
(i) for some $c>0, \alpha\in[0,\frac{1}{2})$ and $h\in \mathbb{N}^+$,
\begin{eqnarray}\label{a4}
\inf_{k\geq 1}k^{\alpha}\lambda_k(h)>c.
\end{eqnarray}
(ii) $\lbrace   \phi_{k,i}\rbrace$ is independent of  noises $\lbrace \varepsilon_{k,i}\rbrace$.
\end{description}

\begin{theorem}\label{lms}
Under Assumptions A1, A2' and A3'(or A3''), if the diffusion Robbins-Monro algorithm takes
\begin{eqnarray}
L_{k,i}=\frac{1}{(k+1)^{\beta}}\frac{\phi_{k,i}}{1+\|\phi_{k,i}\|^2},\quad k\geq 0,~i=1,\ldots,n,\nonumber
\end{eqnarray}
where $\beta\in(\frac{1}{2},1-\alpha)$, then\\
(i)
 as $k\rightarrow+\infty$,
\begin{eqnarray}
E\|\widetilde\Theta_{k}\|^2\rightarrow0 \quad \mbox{and} \quad   \|\widetilde\Theta_{k}\|\stackrel{a.s.}{\longrightarrow} 0;\nonumber
\end{eqnarray}
(ii) the mean-square convergence rate is
\begin{eqnarray}\label{cr}
\limsup_{k\rightarrow+\infty}k^{\beta-\alpha}E\|\widetilde\Theta_{k}\|^2\leq \frac{M}{sc},
\end{eqnarray}
where $s$   and $M$  are two constants  defined  in Lemma \ref{gjb2} and Assumption A2'.
In addition, if the noises further satisfy
\begin{eqnarray}\label{EVl}
\sup_{k\geq 1} E\left[(V_{k}^{\tau}V_{k})^{l}|\Phi_{j},  0\leq j\leq k\right]<+\infty, \quad \mbox{a.s.}
\end{eqnarray}
for some   $l> \frac{1}{\b-\a}$,  then  for any $\varepsilon \in (0,\b-\a-\frac{1}{l+1})$,
\begin{eqnarray}\label{asconv}
\|\widetilde\Theta_{k}\|^2=o(k^{-\varepsilon}),\quad \mbox{a.s.}.
\end{eqnarray}

\end{theorem}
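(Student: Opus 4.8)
The plan is to treat $E\|\widetilde\Theta_k\|^2$ as a nonnegative almost-supermartingale and to extract a persistent-excitation lower bound from the cooperative condition A3'/A3''. First I would square the error recursion and condition on $\mathcal{F}_{k-1}$. Writing $\bar{\mathcal{A}}=\mathcal{A}\otimes I_m$, Assumption A1 makes $\mathcal{A}$ doubly stochastic, so $\bar{\mathcal{A}}^{\tau}\bar{\mathcal{A}}=(\mathcal{A}^{\tau}\mathcal{A})\otimes I_m\preceq I_{mn}$. Since each block of $F_k=L_k\Phi_k^{\tau}$ equals $\frac{1}{(k+1)^{\beta}}\frac{\phi_{k,i}\phi_{k,i}^{\tau}}{1+\|\phi_{k,i}\|^2}$, the matrix $F_k$ is symmetric with $0\preceq F_k\prec I_{mn}$, whence $(I_{mn}-F_k)^2\preceq I_{mn}-F_k$. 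Using A2' (the noise is a conditional martingale difference with second moment bounded by $M$, and under A3'' independent of the regressors) the cross term vanishes in conditional mean, while $\|L_{k,i}\|^2\le\frac{1}{4}(k+1)^{-2\beta}$ makes the noise contribution $O((k+1)^{-2\beta})$. Combining these yields
$$E[\|\widetilde\Theta_{k+1}\|^2\mid\mathcal{F}_{k-1}]\le\|\widetilde\Theta_k\|^2-\widetilde\Theta_k^{\tau}\bar F_k\widetilde\Theta_k+\frac{C}{(k+1)^{2\beta}},$$
where $\bar F_k=E[F_k\mid\mathcal{F}_{k-1}]$ and $C$ is a constant multiple of $nM$; since $\beta>\frac12$ the driving term is summable.

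The heart of the argument, and the step I expect to be hardest, is to lower bound the excitation term $\widetilde\Theta_k^{\tau}\bar F_k\widetilde\Theta_k=\frac{1}{(k+1)^{\beta}}\sum_{i=1}^n\tilde\theta_{k,i}^{\tau}M_{k,i}\tilde\theta_{k,i}$, with $M_{k,i}=E[\frac{\phi_{k,i}\phi_{k,i}^{\tau}}{1+\|\phi_{k,i}\|^2}\mid\mathcal{F}_{k-1}]$. Assumptions A3'/A3'' control only the \emph{average} $\frac1n\sum_iM_{k,i}\succeq c(k+1)^{-\alpha}I_m$; an individual node may be entirely unexcited, so the averaged bound is usable only after the node errors synchronise. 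I would decompose $\widetilde\Theta_k=(\mathbf{1}\otimes I_m)\bar\theta_k+\check\Theta_k$ into its consensus part $\bar\theta_k=\frac1n\sum_i\tilde\theta_{k,i}$ and disagreement $\check\Theta_k$, the latter killed by the averaging projection $J=\frac1n\mathbf{1}\mathbf{1}^{\tau}$. The consensus part feels the full cooperative excitation, $\sum_iM_{k,i}\succeq nc(k+1)^{-\alpha}I_m$; for the disagreement part the decisive fact — here A1's requirement that $\mathcal{A}^{\tau}\mathcal{A}$ be irreducible enters — is that $\|\bar{\mathcal{A}}(I_{mn}-J\otimes I_m)\|$ equals the square root of the second largest eigenvalue of $\mathcal{A}^{\tau}\mathcal{A}$, hence is strictly below $1$, so each combination step contracts $\check\Theta_k$ geometrically while adaptation perturbs it only by $O((k+1)^{-\beta})$. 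This synchronisation estimate is precisely what I would package as Lemma \ref{gjb2}: through the constant $s$ it shows $\check\Theta_k$ is negligible relative to $\bar\theta_k$ and converts the node-wise sum into the network lower bound $\widetilde\Theta_k^{\tau}\bar F_k\widetilde\Theta_k\ge \frac{sc}{(k+1)^{\beta+\alpha}}\|\widetilde\Theta_k\|^2$ up to lower-order terms.

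With this lower bound the recursion becomes $E[\|\widetilde\Theta_{k+1}\|^2\mid\mathcal{F}_{k-1}]\le(1-\frac{sc}{(k+1)^{\beta+\alpha}})\|\widetilde\Theta_k\|^2+\frac{C}{(k+1)^{2\beta}}$ for all large $k$. Taking expectations and invoking a standard deterministic recursive-inequality lemma of Chung type, the hypotheses $\beta>\frac12$ and $\beta+\alpha<1$ force $E\|\widetilde\Theta_k\|^2\to0$ with $\limsup_k k^{\beta-\alpha}E\|\widetilde\Theta_k\|^2\le C/(sc)$, which after absorbing the network constants into $s$ is the claimed bound $M/(sc)$, proving (ii) and the mean-square half of (i). For almost-sure convergence I would instead read the same inequality as a nonnegative almost-supermartingale: summability of the driving term lets the Robbins--Siegmund theorem deliver that $\|\widetilde\Theta_k\|^2$ converges a.s. and $\sum_k\widetilde\Theta_k^{\tau}\bar F_k\widetilde\Theta_k<\infty$ a.s. Were the limit positive, the excitation lower bound together with $\sum_k(k+1)^{-(\beta+\alpha)}=\infty$ would contradict summability; hence the limit is $0$ and $\|\widetilde\Theta_k\|\to0$ a.s.

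Finally, to upgrade to the a.s. rate \dref{asconv} under the extra moment bound \dref{EVl}, I would track higher moments of the same recursion: the accumulated noise $\sum_j\bar{\mathcal{A}}L_jV_j$ is a martingale whose $l$-th moment is controllable by Burkholder--Rosenthal inequalities thanks to \dref{EVl}. Combining the mean-square rate from (ii), a Markov inequality at exponent $l$, and a Borel--Cantelli argument along a geometric subsequence — with the thresholds exactly dictated by $l>\frac{1}{\beta-\alpha}$ and $\varepsilon<\beta-\alpha-\frac1{l+1}$ — shows that $\|\widetilde\Theta_k\|^2$ exceeds $k^{-\varepsilon}$ only finitely often, giving $\|\widetilde\Theta_k\|^2=o(k^{-\varepsilon})$ a.s. The main obstacle throughout is the synchronisation estimate of the second paragraph; once the network excitation lower bound is secured, the remaining steps are standard stochastic-approximation arguments.
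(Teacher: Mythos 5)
Your plan diverges from the paper's proof at exactly the step you identify as the hardest, and as written that step contains a genuine error. Once you bound $\bar{\mathcal{A}}^{\tau}\bar{\mathcal{A}}\preceq I_{mn}$ and $(I_{mn}-F_k)^2\preceq I_{mn}-F_k$, your drift term is $\widetilde\Theta_k^{\tau}E[F_k|\mathcal{F}_{k-1}]\widetilde\Theta_k$ with a \emph{block-diagonal} conditional matrix: the network has been erased from the inequality, and it cannot be re-inserted by the claimed bound $\widetilde\Theta_k^{\tau}\bar F_k\widetilde\Theta_k\geq \frac{sc}{(k+1)^{\alpha+\beta}}\|\widetilde\Theta_k\|^2$ ``up to lower-order terms.'' That bound is false whenever $\widetilde\Theta_k$ concentrates on an unexcited node, and the disagreement $\check\Theta_k$ is \emph{not} negligible relative to $\bar\theta_k$: fresh noise regenerates a disagreement of order $k^{-\beta}$ at every step, while $\|\bar\theta_k\|$ itself tends to zero and can be smaller than $\|\check\Theta_k\|$. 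What your decomposition can actually deliver, with additional work, is the weaker estimate $\widetilde\Theta_k^{\tau}\bar F_k\widetilde\Theta_k\geq \frac{nc}{2(k+1)^{\alpha+\beta}}\|\bar\theta_k\|^2-(k+1)^{-\beta}\|\check\Theta_k\|^2$, coupled with a separate contraction recursion giving $E\|\check\Theta_k\|^2=O(k^{-2\beta})$ (your spectral claim $\|\mathcal{A}(I_n-\frac{1}{n}\mathbf{1}\mathbf{1}^{\tau})\|<1$ is correct under A1, since $\mathcal{A}^{\tau}\mathcal{A}$ is positive semidefinite and irreducible, so the eigenvalue $1$ is simple); then $\|\widetilde\Theta_k\|^2=n\|\bar\theta_k\|^2+\|\check\Theta_k\|^2$ and summable corrections of order $k^{-3\beta}$ close the loop. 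So the route is repairable, but only as a genuinely coupled two-time-scale analysis, not as a pointwise domination. This is also not what Lemma \ref{gjb2} does: that lemma is not a synchronisation estimate but a spectral bound $\lambda_{min}(E[I_{mn}-\psi_h^{\tau}\psi_h|\mathcal{F}])\geq s\,\lambda_{min}(E[\sum_{k,i}(I_m-A_{k,i}^2)|\mathcal{F}])$ on products of combination--adaptation factors; it keeps $\mathcal{A}$ \emph{inside} the quadratic form, so a single conditional application yields the contraction $(1-\frac{scn}{(k+1)^{\alpha+\beta}})$ on the whole error vector with no consensus decomposition. Moreover, under A3$''$ your per-step excitation bound fails outright (only the $h$-step window average is excited), which forces the $h$-step product analysis of the paper's Case 2; your sketch does not address this case at all.

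The last part also has a gap. The mean-square rate plus Markov at any fixed exponent cannot give summable tails, since $\beta-\alpha<1$; one needs moments of the \emph{state} up to order $l+1$, and Burkholder--Rosenthal applied to the accumulated noise $\sum_j\bar{\mathcal{A}}L_jV_j$ alone cannot produce them, because the random transition products $\prod_j(\mathcal{A}\otimes I_m)(I_{mn}-F_j)$ contract only in conditional expectation (pointwise their norm is merely $\leq 1$), so the noise sum and the state cannot be decoupled. The paper instead runs an induction establishing $E(\widetilde\Theta_k^{\tau}\widetilde\Theta_k)^j=o(k^{-j(\beta-\alpha-\varepsilon)})$ for all $j\in[1,l+1]$, via a trinomial expansion of the squared recursion and repeated use of Fabian's lemma, after which Markov at exponent $l+1$ makes $P((k+1)^{\varepsilon}\|\widetilde\Theta_k\|^2>\delta)$ summable over all $k$ and Borel--Cantelli finishes, with no geometric subsequence needed; your exponent bookkeeping is consistent with this, but the moment-induction mechanism, which is the substantive content of this part, is missing. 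Finally, a small simplification worth noting: for the almost-sure half of (i) the paper does not reuse the excitation bound at all; it combines the conditional supermartingale inequality (with summable drift $nM(k+1)^{-2\beta}$) with the already-proved $E\|\widetilde\Theta_k\|^2\to0$ and Fatou's lemma (Lemma \ref{ty}), which sidesteps the contradiction argument your sketch builds on the flawed pointwise bound.
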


\begin{remark}\label{lmsr}
(i)  In Theorem \ref{lms},      let $m=1$,   $\theta_{0,i}=0$,   $\phi_{k,i}=\sqrt{2c}$,    $\varepsilon_{k,i}=\varepsilon_{k,1}$, $E\varepsilon_{k,1}^2=M$ for all $k\geq 0$ and $i=1,\ldots,n$.
Then,   $\alpha=0$ in Assumption A3' and  $\widetilde\Theta_{k}=\textbf{1}\cdot \tilde\theta_{k,1}$ with
\begin{eqnarray*}
E\widetilde\Theta_{k+1}^{\tau}\widetilde\Theta_{k+1}
&=&E\left[\widetilde\Theta_{k}^{\tau}(I_{mn}-F_{k})(\mathcal{B}\otimes I_{m})(I_{mn}-F_{k})\widetilde\Theta_{k}\right]\quad\nonumber\\
&&+tr(E[(\mathcal{A}\otimes I_{m})L_{k}V_{k}V_{k}^{\tau}L_k^{\tau}(\mathcal{A}^{\tau}\otimes I_{m})])\nonumber\\
&=&\left(1-\frac{2c}{(1+2c)(k+1)^{\beta}}\right)^2 E\widetilde\Theta_{k}^{\tau}\widetilde\Theta_{k} +\frac{2cnM}{(1+2c)^2(k+1)^{2\beta}}\nonumber\\
&\geq &\left(1-\frac{4c}{(1+2c)(k+1)^{\beta}}\right) E\widetilde\Theta_{k}^{\tau}\widetilde\Theta_{k}+\frac{2cnM}{(1+2c)^2(k+1)^{2\beta}},
\end{eqnarray*}
which by \cite[Lemma 4.2]{fa1967}   yields
\begin{eqnarray*}
\liminf_{k\rightarrow+\infty}k^{\beta}E\|\widetilde\Theta_{k}\|^2\geq \frac{nM}{2+4c}.
\end{eqnarray*}
So, generally speaking, the  order of magnitude of the convergence rate   in \dref{cr} can not    be improved if no further conditions are imposed.
\\
(ii) By \dref{cr},    constant $s$  is important to the  performance of the  mean-square convergence for the diffusion RM.
Note that if $\mathcal{A}$ is symmetric and $\inf_{i\in[1,n]}a_{ii}>0$, an analogous proof of  \cite[Lemma 5.10]{guo2018} shows that in  Lemma \ref{gjb2}, we can  select
\begin{eqnarray}
s=\frac{\inf_{i\in[1,n]}a_{ii}}{32n(1+4h)^2}\lambda(\mathcal{G}),\nonumber
\end{eqnarray}
 where $\lambda(\mathcal{G})$ is the smallest positive eigenvalue of the Laplacian matrix $I_{n}-\mathcal{A}$ and $h$ is defined in Assumption A3''. See Appendix \ref{AppC}. By \textit{Cheeger's inequality} \cite{chung1996}, $\lambda(\mathcal{G})\geq h_{\mathcal{G}}^2/2$, where
$h_{\mathcal{G}}$  is the Cheeger constant that  describes the difficulty of breaking the connectivity of $\mathcal{G}$.
Rewrite  \dref{cr}  as
\begin{eqnarray}
\limsup_{k\rightarrow+\infty}k^{\beta-\alpha}\frac{\sum_{i=1}^{n}E\|\tilde\theta_{k,i}\|^2}{n}\leq \frac{M}{snc},\nonumber
\end{eqnarray}
then
\begin{eqnarray}
\limsup_{k\rightarrow+\infty}k^{\beta-\alpha}\frac{\sum_{i=1}^{n}E\|\tilde\theta_{k,i}\|^2}{n}\leq \frac{64(1+4h)^2M}{ch_{\mathcal{G}}^2\inf_{i\in[1,n]}a_{ii}}.\nonumber
\end{eqnarray}
So,  for symmetric $\mathcal{A}$ with   $\inf_{i\in[1,n]}a_{ii}>0$,  the convergence performance of the diffusion RM  could be enhanced by  promoting  the connectivity of $\mathcal{G}$.
\end{remark}

\begin{remark}
To better understand the problem, we compare the diffusion RM with the diffusion RLS and the diffusion NMLS. \\
(i)  It is easy to  verify that data $\{\phi_{k,i}\}$ constructed in Section \ref{proofls2} satisfies Assumption A3''.   So, for  high dimensional parameters, even if $\{\phi_{k,i}\}$ corresponds to a diverging error of the diffusion RLS, it  still  stands a chance to generate   estimates converging to  true parameters, by applying the diffusion RM.   \\
(ii) The  cooperative information condition derived in  \cite{guo2018} requires
$\lbrace \lambda_{k}, k\geq 0\rbrace\in S^{0}(\lambda)$, where  $\lambda\in(0,1)$ and
\begin{eqnarray}
S^{0}(\lambda)\triangleq\bigg\lbrace \lbrace a_{k}\rbrace: a_{k}\in[0,1], E\bigg[\prod_{j=i+1}^{k}(1-a_j)\bigg]\leq K\lambda^{k-i}, \forall k>i, i\geq 0, ~\mbox{for~some~}K>0 \bigg\rbrace.\nonumber
\end{eqnarray}
Note that this  cooperative information condition  is necessary and sufficient for the stability of the diffusion NLMS algorithm in \cite{guo2018},  whenever $\{\phi_{k,i}\}$ is $\phi$-mixing. However, by \cite[Theorem 2.3]{guo1994}, $\lbrace \lambda_{k}, k\geq 0\rbrace\in S^{0}(\lambda)$ implies \dref{a4} with $\alpha=0$ 
 for any $\phi$-mixing data $\{\phi_{k,i}\}$. 
So, the diffusion RM  could deal with some data beyond the capability of the diffusion NLMS, as far as the time-invariant-parameter case is concerned.

\end{remark}

\section{Proof of Theorem \ref{ls1} }\label{gsta}

We preface the proof with a simple lemma below.

\begin{lemma}\label{bkl}
Let  $\{e_k\}$ be a series of nonnegative real numbers.\\
(i) If for some $d_k\geq 0$ and $\sum_{k=0}^{+\infty}d_k<+\infty$,
\begin{eqnarray}
e_{k+1}\leq e_k+d_k,\quad \forall k\geq 0,\nonumber
\end{eqnarray}
then $\lim_{k\rightarrow+\infty}e_k$ exists.\\
\noindent(ii) If there exist two nonnegative sequences $\{a_k\}$ and $\{b_k\}$ with $\sum_{k=0}^{+\infty}a_k=+\infty$ and $\sum_{k=0}^{+\infty}b_k<+\infty$ such that
\begin{eqnarray}
e_{k+1}\leq(1-a_k)e_k+b_k,\quad \forall k\geq 0,\nonumber
\end{eqnarray}
 then
$\lim_{k\rightarrow+\infty}e_k=0.$
\end{lemma}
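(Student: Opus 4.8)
For part (i), the plan is a compensation argument. I would introduce the auxiliary sequence $s_k \triangleq e_k - \sum_{j=0}^{k-1} d_j$ (with $s_0 = e_0$), so that the hypothesis $e_{k+1} \le e_k + d_k$ rewrites exactly as $s_{k+1} \le s_k$; hence $\{s_k\}$ is non-increasing. Since $e_k \ge 0$ and $\sum_{j=0}^{k-1} d_j \le \sum_{j=0}^{+\infty} d_j < +\infty$, the sequence $\{s_k\}$ is bounded below, so by monotone convergence it has a limit. As $\sum_{j=0}^{k-1} d_j$ also converges, $e_k = s_k + \sum_{j=0}^{k-1} d_j$ converges, which is the claim. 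No estimate beyond convergence of bounded monotone real sequences is needed.

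For part (ii), I would first reduce to the case $a_k \in [0,1]$. Replacing $a_k$ by $a_k' \triangleq \min\{a_k,1\}$ only increases $1-a_k$ (because $e_k \ge 0$), so the recursion $e_{k+1}\le(1-a_k')e_k+b_k$ still holds; and $\sum_k a_k' = +\infty$ is preserved, since if $a_k\ge 1$ infinitely often the truncated terms already sum to $+\infty$, while otherwise $\sum a_k'$ differs from the divergent $\sum a_k$ by a finite amount. Assuming now $0\le a_k\le 1$, I would iterate the recursion from an arbitrary fixed index $M$ to obtain
\[
e_{k+1} \le \Big(\prod_{j=M}^{k}(1-a_j)\Big)e_M + \sum_{j=M}^{k}\Big(\prod_{l=j+1}^{k}(1-a_l)\Big)b_j .
\]

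Two estimates then close the argument. For the homogeneous term, $1-a_j \le e^{-a_j}$ gives $\prod_{j=M}^{k}(1-a_j) \le \exp\big(-\sum_{j=M}^{k} a_j\big)\to 0$ as $k\to+\infty$, since the tail of the divergent series $\sum a_j$ still diverges. For the inhomogeneous term, bounding each factor $\prod_{l=j+1}^{k}(1-a_l)\le 1$ yields $\sum_{j=M}^{k}(\cdots)b_j \le \sum_{j\ge M} b_j$. Given $\varepsilon>0$, convergence of $\sum b_j$ lets me choose $M$ with $\sum_{j\ge M} b_j < \varepsilon$; fixing this $M$ and sending $k\to+\infty$ gives $\limsup_{k} e_k \le 0\cdot e_M + \varepsilon = \varepsilon$. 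Since $\varepsilon$ is arbitrary and $e_k\ge 0$, I conclude $\lim_k e_k = 0$.

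The only genuine subtlety, and the step I would treat most carefully, is the uniform control of the accumulated-$b_j$ term, which is exactly why the two-stage split is essential: one cannot bound the sum from index $0$ at once, because the early $b_j$ are not sufficiently damped by the product kernel. Fixing the cutoff $M$ first (to make the future noise budget smaller than $\varepsilon$) and only afterwards exploiting $\prod_{j=M}^{k}(1-a_j)\to 0$ cleanly decouples the two effects. The reduction to $a_k\le 1$ is a minor preliminary whose sole purpose is to keep the product kernel in $[0,1]$.
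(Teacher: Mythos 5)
Your proposal is correct in both parts, but it takes a genuinely different route from the paper's proof, most notably in part (ii). For (i), the paper does not use your compensated sequence $s_k=e_k-\sum_{j=0}^{k-1}d_j$; it argues directly with upper and lower limits: from $e_l\le e_k+\xi_k$ where $\xi_k=\sum_{i\ge k}d_i$, it deduces $\liminf_{k}e_k\ge\limsup_{l}e_l$, so the limit exists. The two devices are interchangeable here — your version packages the same tail estimate as monotone convergence of a bounded sequence. The real divergence is in (ii): the paper never expands the recursion into a product kernel. It notes $e_{k+1}\le e_k+b_k$, invokes (i) to get a limit $e$, and, assuming $e>0$, telescopes $e_{k+1}-e_k\le -a_ke_k+b_k$ over $k>N$ (where $e_k>e/2$) to force $e_{N+i}\to-\infty$ along $\sum_k a_k=+\infty$, a contradiction. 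That argument is insensitive to the size of $a_k$, which is why the paper can skip the truncation $a_k'=\min\lbrace a_k,1\rbrace$ — a step your route genuinely requires, since for $a_k>1$ the factors $1-a_l$ can be negative and your bounds $\prod_{l=j+1}^{k}(1-a_l)\le 1$ and $1-a\le e^{-a}$ would no longer apply as used; you identify and handle this correctly. As for what each approach buys: the paper's contradiction proof is shorter and hypothesis-light, but purely qualitative; your discrete variation-of-constants estimate is constructive and quantitative, yielding the explicit bound $e_{k+1}\le \exp\bigl(-\sum_{j=M}^{k}a_j\bigr)e_M+\sum_{j\ge M}b_j$, i.e., an actual decay rate — the kind of information the paper must instead import from \cite[Lemma 4.2]{fa1967} when it needs convergence rates in Theorem \ref{lms}.
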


\begin{proof}
(i) Fix an integer $k>0$. Then, for any $l\geq k$,
\begin{eqnarray}
e_{l}\leq e_{k}+\sum_{i=k}^{l-1}d_i\leq e_{k}+\xi_{k},\nonumber
\end{eqnarray}
where $\xi_{k}=\sum_{i=k}^{+\infty}d_i$. So,
\begin{eqnarray}
e_{k}\geq \limsup_{l\rightarrow+\infty}e_{l}-\xi_{k},\nonumber
\end{eqnarray}
which together with $\lim_{k\rightarrow+\infty}\xi_{k}=0$ yields
\begin{eqnarray}
\liminf_{k\rightarrow+\infty}e_{k}\geq \limsup_{l\rightarrow+\infty}e_{l}-\lim_{k\rightarrow+\infty}\xi_{k}=\limsup_{l\rightarrow+\infty}e_{l}.\nonumber
\end{eqnarray}
Then,  $\lim_{k\rightarrow+\infty}e_k$ exists.

To prove (ii), note that $e_{k+1}\leq e_k+b_k$, where $\sum_{k=0}^{+\infty}b_k<+\infty$. Therefore,  $\lim_{k\rightarrow+\infty}e_k$ exists by (i).
Suppose $e\triangleq\lim_{k\rightarrow+\infty}e_k>0$, so there is a $N>0$ such that $e_{k}>\frac{e}{2}$    for all $k>N$.
Consequently,
\begin{eqnarray*}\label{eNi}
e_{N+i}-e_{N+1}&=&\sum_{k=N+1}^{N+i-1}(e_{k+1}-e_{k})\leq  -\sum_{k=N+1}^{N+i-1}a_ke_{k}+\sum_{k=N+1}^{N+i-1}b_k\leq -\frac{a}{2}\sum_{k=N+1}^{N+i-1}a_{k}+\sum_{k=N+1}^{+\infty}b_k,
\end{eqnarray*}
which shows  $e_{N+i}\rightarrow -\infty$ by letting $i\rightarrow+\infty$. This leads to a contradiction and hence $e=0$.
\end{proof}

\begin{lemma}\label{ty}
Let $\lbrace e_{k},k\geq 0\rbrace$ and $\lbrace d_{k},k\geq 0\rbrace$ be   two non-negative processes   adapted to a filtration $\lbrace \mathcal{G}_{k},k\geq 0\rbrace$. If
\begin{eqnarray}
E[e_{k+1}|\mathcal{G}_{k}]\leq e_{k}+b_k-d_k,\quad k\geq 0\nonumber
\end{eqnarray}
for some  $b_k\geq 0$ with $\sum_{k=0}^{+\infty}b_k<+\infty$,  then
\begin{eqnarray}\label{dk<inf}
\sum_{k=0}^{+\infty}d_k<+\infty \quad  \mbox{a.s.}.
\end{eqnarray}
In addition,  if $\lim_{k\rightarrow+\infty}Ee_{k}=0$, then
\begin{eqnarray*}
\lim_{k\rightarrow+\infty}e_{k}=0,\quad \mbox{a.s.}.
\end{eqnarray*}
\end{lemma}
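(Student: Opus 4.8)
The statement is a version of the Robbins--Siegmund almost-supermartingale convergence theorem, so the plan is to reduce it to the ordinary supermartingale convergence theorem by building the right auxiliary process. The naive idea of tracking $e_k+\sum_{j<k}d_j-\sum_{j<k}b_j$ makes a supermartingale but one that is only bounded below by $-\sum_j b_j$; the cleaner trick, which I would use, is to carry the \emph{forward} tail $\sum_{j\ge k}b_j$ instead of the backward partial sum, so that the resulting process is genuinely non-negative and no truncation or stopping-time argument is needed.

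Concretely, I would set
\begin{eqnarray}
T_{k}\triangleq e_{k}+\sum_{j=0}^{k-1}d_{j}+\sum_{j=k}^{+\infty}b_{j},\qquad k\ge 0,\nonumber
\end{eqnarray}
which is well defined and non-negative because $\sum_{j}b_{j}<+\infty$ and $e_k,d_k\ge 0$. Using the hypothesis $E[e_{k+1}|\mathcal{G}_{k}]\le e_{k}+b_{k}-d_{k}$ together with $b_{k}+\sum_{j\ge k+1}b_{j}=\sum_{j\ge k}b_{j}$, a one-line conditional-expectation computation gives $E[T_{k+1}|\mathcal{G}_{k}]\le T_{k}$, so $\{T_{k}\}$ is a non-negative supermartingale adapted to $\{\mathcal{G}_{k}\}$ (here I treat $b_k$ as deterministic, which is all the finite-sum hypothesis requires). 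By the supermartingale convergence theorem $T_{k}$ converges a.s. to a finite limit. Since the tail $\sum_{j\ge k}b_{j}\to 0$, it follows that $e_{k}+\sum_{j=0}^{k-1}d_{j}$ converges a.s. to a finite limit as well. As $\sum_{j=0}^{k-1}d_{j}$ is non-decreasing and $e_{k}\ge 0$, the partial sums are bounded above by this limit, whence $\sum_{k=0}^{+\infty}d_{k}<+\infty$ a.s.; subtracting the convergent series from the convergent sum $e_k+\sum_{j<k}d_j$ then shows that $e_{k}$ itself converges a.s. to some finite $e_{\infty}\ge 0$. This already yields \dref{dk<inf}.

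For the second claim, I would invoke the a.s. convergence $e_{k}\to e_{\infty}$ just obtained and apply Fatou's lemma to the non-negative sequence $\{e_k\}$: $E[e_{\infty}]=E[\liminf_{k}e_{k}]\le\liminf_{k}E[e_{k}]=0$ under the added hypothesis $\lim_k Ee_k=0$. Since $e_{\infty}\ge 0$ and has zero expectation, $e_{\infty}=0$ a.s., i.e. $e_{k}\to 0$ a.s. The only real obstacle is the first step — choosing the auxiliary process so that it is both a supermartingale and non-negative; once the forward-tail compensator $\sum_{j\ge k}b_j$ is in place, the remaining arguments (supermartingale convergence, monotonicity to split off $\sum d_j$, and Fatou) are entirely routine.
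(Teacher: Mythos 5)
Your proof is correct, but it takes a more self-contained route than the paper. The paper disposes of the first claim in one line by citing \cite[Lemma 1.2.2]{ch02} (a Robbins--Siegmund-type almost-supermartingale lemma), which directly yields both $\sum_{k}d_k<+\infty$ a.s. and the a.s. convergence of $e_k$ to an integrable limit $e_{\infty}$, and then --- exactly as you do --- kills the limit with Fatou's lemma using $\lim_{k}Ee_k=0$. What you have done, in effect, is re-prove the cited lemma in the special case needed here via the tail-compensated process $T_k=e_k+\sum_{j<k}d_j+\sum_{j\geq k}b_j$: the forward tail is indeed the right compensator, your verification that $E[T_{k+1}|\mathcal{G}_k]\leq T_k$ is correct (the $-d_k$ in the hypothesis cancels the new term in $\sum_{j\leq k}d_j$, and $b_k+\sum_{j\geq k+1}b_j=\sum_{j\geq k}b_j$), and the extraction of $\sum_k d_k<+\infty$ and of the a.s. limit of $e_k$ by monotonicity and subtraction is sound, as is the concluding Fatou step, which is verbatim the paper's. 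The trade-off: the paper's proof is shorter and leans on a standard reference, while yours makes the mechanism transparent and independent of \cite{ch02}. One small point of rigor worth recording: the textbook supermartingale convergence theorem presupposes integrability, so you should either add that $Ee_0<+\infty$ may be assumed (it holds in every application in the paper, where $e_0$ is deterministic; taking expectations in $E[T_{k+1}|\mathcal{G}_k]\leq T_k$ then propagates $ET_k\leq Ee_0+\sum_j b_j<+\infty$) or invoke the generalized convergence theorem for non-negative supermartingales, which needs no integrability and is the device underlying the cited lemma anyway.
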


\begin{proof}
As a matter of fact, \dref{dk<inf} is a direct result of \cite[Lemma 1.2.2]{ch02} and this lemma further shows that
there exists a random variable $e_{\infty}$ such that $E|e_{\infty}|<+\infty$ and
\begin{eqnarray}
\lim_{k\rightarrow+\infty}e_{k}=e_{\infty},\quad\mbox{a.s.}.\nonumber
\end{eqnarray}
Since $e_k\geq 0$, by \textit{Fatou's lemma},
\begin{eqnarray}
0=\liminf_{k\rightarrow+\infty}Ee_{k}\geq Ee_{\infty},\nonumber
\end{eqnarray}
which indicates $e_{\infty}=0$ almost surely.
\end{proof}

Fix an integer $h\geq 1$. Let $\lbrace A_{k,i};k=1,\ldots,h, i=1,\ldots,n\rbrace$ be a sequence of $m\times m$ symmetric random matrices     satisfying $0\leq A_{k,i}\leq I_m$. Denote $I_{k}(A)\triangleq\mbox{diag}\lbrace A_{k,1},\ldots,A_{k,n}\rbrace$ and
 \begin{eqnarray}\label{ap}
\left\{
\begin{array}{l}
\psi_{0}\triangleq I_{mn}\\
\psi_{k}\triangleq\prod_{j=k}^{1}((\mathcal{A}\otimes I_{m})I_{j}(A)),\quad k=1,\ldots,h.
\end{array}
\right.
\end{eqnarray}
 The following lemma shows

\begin{lemma}\label{gjb2}
Under Assumption A1, for any $\sigma$-algebra $\mathcal{F}$,
there is a constant $s\in(0,1)$ determined by $h$ and $\mathcal{A}$ such that
\begin{eqnarray}\label{sbd12}
\lambda_{min}(E\left[I_{mn}-\psi_h^{\tau}\psi_h\big|\mathcal{F}\right])\geq s\lambda_{min}\left(E\left[\sum_{k=1}^{h}\sum_{i=1}^{n}(I_{m}-A_{k,i}^2)\bigg|\mathcal{F}\right]\right).\nonumber
\end{eqnarray}
\end{lemma}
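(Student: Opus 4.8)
The plan is to prove the equivalent scalar form: for every unit vector $x\in\mathbb{R}^{mn}$,
\[
E\big[\,1-\|\psi_h x\|^2\,\big|\,\mathcal{F}\big]\ \geq\ s\,\lambda_{min}\Big(E\big[\textstyle\sum_{k=1}^h\sum_{i=1}^n(I_m-A_{k,i}^2)\,\big|\,\mathcal{F}\big]\Big),
\]
since $x^{\tau}(I_{mn}-\psi_h^{\tau}\psi_h)x=1-\|\psi_h x\|^2$ and $\lambda_{min}$ of a conditional expectation is the minimum over unit $x$ of the corresponding conditional mean, while the right-hand side is $x$-free. Writing $y_k\triangleq\psi_k x$ and telescoping $1-\|y_h\|^2=\sum_{k=1}^h(\|y_{k-1}\|^2-\|y_k\|^2)$, I would use $y_k=(\mathcal{A}\otimes I_m)I_k(A)y_{k-1}$ together with $\|\mathcal{A}\otimes I_m\|\leq1$ and $0\leq A_{k,i}\leq I_m$ to split each increment \emph{exactly} into a nonnegative \emph{node} part and a nonnegative \emph{network} part,
\[
\|y_{k-1}\|^2-\|y_k\|^2=\underbrace{y_{k-1}^{\tau}\big(I_{mn}-I_k(A)^2\big)y_{k-1}}_{a_k}+\underbrace{\big(\|I_k(A)y_{k-1}\|^2-\|(\mathcal{A}\otimes I_m)I_k(A)y_{k-1}\|^2\big)}_{b_k}.
\]

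Assumption A1 supplies the mixing. Since $\mathcal{A}$ is doubly stochastic, $J\triangleq\frac1n(\mathbf{1}\mathbf{1}^{\tau})\otimes I_m$ is fixed by $\mathcal{A}\otimes I_m$, which also preserves the disagreement subspace $\{z:Jz=0\}$ with $J^{\perp}\triangleq I_{mn}-J$; and because $\mathcal{A}^{\tau}\mathcal{A}$ is irreducible doubly stochastic, its largest eigenvalue $1$ is simple, so $\rho\triangleq\lambda_2(\mathcal{A}^{\tau}\mathcal{A})<1$ and $\|(\mathcal{A}\otimes I_m)z\|^2\leq\rho\|z\|^2$ on $\{z:Jz=0\}$. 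A Pythagorean split then gives $b_k\geq(1-\rho)\|J^{\perp}I_k(A)y_{k-1}\|^2$. Retaining $b_k$ is essential: keeping only $\sum_k a_k$ yields merely $E[I_{mn}-\psi_h^{\tau}\psi_h|\mathcal{F}]\succeq\sum_k E[\psi_{k-1}^{\tau}(I_{mn}-I_k(A)^2)\psi_{k-1}|\mathcal{F}]$, whose smallest eigenvalue can already be $0$ while the target is positive (e.g. $h=2$ with a single fully contracting node), so the network dissipation must carry part of the bound.

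To expose the aggregate contraction, decompose each node block $(y_{k-1})_i=\bar y_{k-1}+\delta_{k-1,i}$ about the average $\bar y_{k-1}\triangleq\frac1n\sum_i(y_{k-1})_i$, so that $\sum_i\|\delta_{k-1,i}\|^2=\|J^{\perp}y_{k-1}\|^2$. Applying $(u+w)^{\tau}Q(u+w)\geq\frac12u^{\tau}Qu-w^{\tau}Qw$ with $Q=I_m-A_{k,i}^2\succeq0$ and summing over $i$ gives
\[
a_k\ \geq\ \tfrac12\,\bar y_{k-1}^{\tau}G_k\,\bar y_{k-1}-\|J^{\perp}y_{k-1}\|^2,\qquad G_k\triangleq\sum_{i=1}^n(I_m-A_{k,i}^2),
\]
and $E[\sum_{k=1}^hG_k\,|\,\mathcal{F}]$ is exactly the matrix whose smallest eigenvalue is the right-hand side of the lemma. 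Thus the $a_k$'s feed the consensus trajectory against the prescribed contraction, at the cost of the disagreement penalty $\|J^{\perp}y_{k-1}\|^2$, which I intend to charge to the $b_k$'s.

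The main obstacle is the coupling in $\sum_k\bar y_{k-1}^{\tau}G_k\bar y_{k-1}$: the consensus trajectory $\bar y_{k-1}$ is random, may both shrink and rotate with $k$, and—since the $A_{k,i}$ are permitted an arbitrary joint law—may be adversarially anti-aligned with the large directions of $G_k$, so this sum cannot be compared termwise with $\|v\|^2\lambda_{min}(E[\sum_kG_k|\mathcal{F}])$; indeed the bound is genuinely an expectation-level (not pointwise) statement. I would resolve this by a two-regime accounting along the $h$ steps, played against the disagreement dissipation: whenever the trajectory stays near consensus with norm bounded below, the node terms $a_k$ harvest a fixed fraction of $\bar y_{k-1}^{\tau}G_k\bar y_{k-1}$, hence, summed, of $\lambda_{min}(E[\sum_kG_k|\mathcal{F}])$; whenever it develops appreciable disagreement, the terms $b_k\geq(1-\rho)\|J^{\perp}I_k(A)y_{k-1}\|^2$ dissipate a fixed fraction of the energy; and since $\|y_0\|=1$ with $\|y_k\|$ nonincreasing, only a bounded budget can be spent, so at most a factor polynomial in $h$ and $(1-\rho)^{-1}$ is lost. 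Collecting the two regimes produces a constant $s\in(0,1)$ depending only on $h$ and $\mathcal{A}$ (of the order of the spectral gap divided by $n\,\mathrm{poly}(h)$, consistent with Remark \ref{lmsr}(ii)); taking the minimum over unit $x$ and then $E[\,\cdot\,|\mathcal{F}]$ gives the claim. The delicate point is making the near-consensus/disagreement trade-off quantitative \emph{uniformly} over the unit sphere and over all admissible joint distributions of $\{A_{k,i}\}$, which is precisely where connectivity (the gap $1-\rho$) enters and fixes the scale of $s$.
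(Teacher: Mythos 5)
Your preliminary reductions are all correct and in fact parallel the paper's own starting point: your exact split of each increment into a node part $a_k$ and a network part $b_k$ is a spectral-gap version of the identity \dref{bds13}, in which the paper writes $x^{\tau}E[I_{mn}-\psi_h^{\tau}\psi_h|\mathcal{F}]x$ exactly as a sum of edge-difference terms $\mathcal{B}[i,j]\,\|A_{k+1,i}z_{k,i}-A_{k+1,j}z_{k,j}\|^2$ plus the node terms $x^{\tau}E[\psi_k^{\tau}(I_{mn}-I_{k+1}^2(A))\psi_k|\mathcal{F}]x$. But the proof has a genuine gap, and you have named it yourself without closing it: the ``two-regime accounting'' is asserted, not executed. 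The step ``whenever the trajectory stays near consensus with norm bounded below, the node terms $a_k$ harvest a fixed fraction of $\bar y_{k-1}^{\tau}G_k\bar y_{k-1}$, hence, summed, of $\lambda_{min}(E[\sum_k G_k|\mathcal{F}])$'' does not follow from anything you have established: since $\bar y_{k-1}$ is random, varies with $k$, and may be correlated with $G_k$, a lower bound on $E[\sum_k \bar y_{k-1}^{\tau}G_k\bar y_{k-1}|\mathcal{F}]$ in terms of $\lambda_{min}(E[\sum_k G_k|\mathcal{F}])$ requires showing that $\bar y_{k-1}$ stays close, \emph{in conditional mean square and uniformly in $k$}, to one fixed deterministic vector of norm bounded below. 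Your regimes are pathwise events per step, while the target is a bound on a single conditional expectation that mixes both regimes across paths and steps; the ``bounded budget'' observation ($\|y_0\|=1$, $\|y_k\|$ nonincreasing) controls total dissipation but by itself produces no fixed direction against which to evaluate $\lambda_{min}$, and your closing sentence explicitly defers exactly this point.

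What is missing is supplied in the paper by running the dichotomy as a contradiction with explicit drift estimates: assuming $x^{\tau}E[I_{mn}-\psi_h^{\tau}\psi_h|\mathcal{F}]x<s\rho$, each edge and node term in \dref{bds13} is small, whence (via the irreducibility path in $\mathcal{B}$, Cauchy--Schwarz, and the one-step comparison $\|\psi_{k+1}x-\psi_kx\|^2$) one gets $E[\|\psi_kx-x\|^2|\mathcal{F}]<\rho/(64hn)$ for all $k\le h$ and, from the $k=0$ disagreement bound, $\|x_1\|^2>1/(4n)$; only then can the node terms be evaluated against the \emph{fixed} block $x_1$ to yield $x^{\tau}E[I_{mn}-\psi_h^{\tau}\psi_h|\mathcal{F}]x\geq \rho/16n-\rho/32n-\rho/64n>s\rho$, a contradiction. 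To repair your direct version you would need the analogous quantitative ingredients: the drift bound $\|\bar y_k-\bar y_{k-1}\|^2\le a_k/n$ (averaging is $\mathcal{A}$-invariant since $\mathcal{A}$ is doubly stochastic), the comparison $\|J^{\perp}y_{k-1}\|^2\lesssim a_k+b_k/(1-\rho)$ using $(I_m-A)^2\preceq I_m-A^2$, and a case split on $\|\bar y_0\|$ (note $\bar y_0$ can vanish, e.g.\ $Jx=0$, in which case the entire bound must come from $b_1$ and the comparison $\lambda_{min}(E[\sum_k G_k|\mathcal{F}])\le hn$); none of this is in your text. As written, the proposal is a correct reduction plus a program for the hard step, not a proof.
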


\begin{proof}
Denote $\mathcal{B}\triangleq{\mathcal{A}^{\tau}\mathcal{A}}$.   Since $\mathcal{B}$ is irreducible, for any $i\in[1,n-1]$, there is an integer  $d_i\geq 2$ and some distinct $c_{1}^{i},\ldots,c_{d_i}^{i}\in [1,n]$ such that
 \begin{eqnarray*}
\left\{
\begin{array}{l}
c_{1}^{i}=i,\quad c_{d_i}^{i}=i+1\\
\mathcal{B}[c_j^{i},c_{j+1}^{i}]>0,\quad j\in[1,d_i-1]
\end{array},
\right.
\end{eqnarray*}
where $M[i,j]$  refers to the $(i,j)$th  entry of a matrix $M$.
Let $q\triangleq\sum_{i=1}^{n-1}d_i-(n-2)$ and  define a sequence of $b_j,j=1,\ldots,q$ with   $b_1=c_1^{1}$ and
$b_j=c_{j-\sum_{i=1}^{l}(d_i-1)}^{l+1},$
where  $l\in[0,n-2]$ and
$$1+\sum_{i=1}^{l}(d_i-1)< j\leq 1+\sum_{i=1}^{l+1}(d_i-1).$$
Hence $\mathcal{B}[b_j,b_{j+1}]>0$ for all $j\in[1,q-1]$.

Select
\begin{eqnarray}
0<s< \frac{\min_{j\in[1,q-1]}\mathcal{B}[b_j,b_{j+1}]}{512h^3n^4q(1+n^2)}\nonumber
\end{eqnarray}
and denote
\begin{eqnarray}
\rho\triangleq\lambda_{min}\left(E\left[\sum_{k=1}^{h}\sum_{i=1}^{n}(I_{m}-A_{k,i}^2)\bigg|\mathcal{F}\right]\right).\nonumber
\end{eqnarray}
Now, suppose for a constant vector $ x\in\mathbb{R}^{mn}$ with $\|x\|=1$,
\begin{eqnarray}\label{fz}
x^{\tau}E\left[I_{mn}-\psi_{h}^{\tau}\psi_{h}\big|\mathcal{F}\right]x<s\rho
\end{eqnarray}
on some trajectory. We prove that on this trajectory,  for any $ k\in [1,h]$,
\begin{eqnarray}\label{ind}
E[\|\psi_k x-x\|^2|\mathcal{F}]<\frac{\rho}{64hn}.
\end{eqnarray}
To this end,  write $\psi_k x=\mbox{col}\lbrace z_{k,1},\ldots,z_{k,n}\rbrace\in\mathbb{R}^{mn}$, $k\in[0,h]$.
Observe that
\begin{eqnarray*}
x^{\tau}(I_{mn}-\psi_{k+1}^{\tau}\psi_{k+1})x=  ( \psi_k x)^\tau    \left( I_{mn}- I_{k+1}(A) (\mathcal{B} \otimes I_{m})I_{k+1}(A) \right )           ( \psi_k x )+ x^\tau (I_{mn}- \psi_k ^\tau \psi_k )x  ,
\end{eqnarray*}
a direct calculation yields
\begin{eqnarray}\label{bds13}
x^{\tau}E\left[I_{mn}-\psi_{h}^{\tau}\psi_{h}\big|\mathcal{F}\right]x
&= & \sum_{1\leq i<j\leq n}\mathcal{B}[i,j] \sum_{k=0}^{h-1}E[\|A_{k+1,i}z_{k,i}-A_{k+1,j}z_{k,j}\|^2|\mathcal{F}]  \nonumber\\
&&+\sum_{k=0}^{h-1}x^{\tau}E[\psi_{k}^{\tau}(I_{mn}-I_{k+1}^2(A))\psi_{k}|\mathcal{F}]x,
\end{eqnarray}
 which, together with \dref{fz}, implies that for any $i\in[1,n-1]$,
\begin{eqnarray}
\sum_{j=1}^{d_{i}-1}E\left[\left\|A_{k+1,c_j^{i}}z_{k,c_j^{i}}-A_{k+1,c_{j+1}^{i}}z_{k,c_{j+1}^{i}}\right\|^2\bigg|\mathcal{F}\right]<\frac{s\rho}{\min_{j\in[1,d_i-1]}\mathcal{B}[c_j^{i},c_{j+1}^{i}]},\nonumber
\end{eqnarray}
and hence
\begin{eqnarray}\label{lh12}
\sum_{j=1}^{q-1}E\left[\left\|A_{k+1,b_j}z_{k,b_j}-A_{k+1,b_{j+1}}z_{k,b_{j+1}}\right\|^2\bigg|\mathcal{F}\right]
< \frac{ns\rho}{\min_{j\in[1,q-1]}\mathcal{B}[b_j,b_{j+1}]}.
\end{eqnarray}
By \dref{lh12} and  \textit{Cauchy-Schwarz inequality},
\begin{eqnarray}\label{eb112}
E\left[\|A_{k+1,i}z_{k,i}-A_{k+1,j}z_{k,j}\|^2\bigg|\mathcal{F}\right]
<\frac{qns\rho}{\min_{j\in[1,q-1]}\mathcal{B}[b_j,b_{j+1}]},\quad \forall  i\not=j.
\end{eqnarray}

Furthermore, since
\begin{eqnarray*}
z_{k,i}^{\tau}(I_{m}-A_{k+1,i}^2)z_{k,i}+z_{k,j}^{\tau}(I_{m}-A_{k+1,j}^2)z_{k,j}
\geq \frac{1}{2}\|(I_m-A_{k+1,i})z_{k,i}-(I_m-A_{k+1,j})z_{k,j}\|^2,
\end{eqnarray*}
\dref{fz} and \dref{bds13}  imply
\begin{eqnarray*}
 \frac{1}{2}\max_{i,j}E[\|(I_m-A_{k+1,i})z_{k,i}-(I_m-A_{k+1,j})z_{k,j}\|^2|\mathcal{F}]
 \leq \sum_{i=1}^{n}E[z_{k,i}^{\tau}(I_{m}-A_{k+1,i}^2)z_{k,i}|\mathcal{F}]<s\rho
\end{eqnarray*}
and
\begin{eqnarray}
E[\|I_{k+1}(A)\psi_{k}x-\psi_{k}x\|^2|\mathcal{F}]
&=&\sum_{i=1}^{n}E[z_{k,i}^{\tau}(I_{m}-A_{k+1,i})^2z_{k,i}|\mathcal{F}]\nonumber\\
&\leq &\sum_{i=1}^{n}E[z_{k,i}^{\tau}(I_{m}-A_{k+1,i}^2)z_{k,i}|\mathcal{F}]<s\rho.\nonumber
\end{eqnarray}
So,
\begin{eqnarray}\label{29}
E[\|\psi_{k+1}x-(\mathcal{A}\otimes I_m)\psi_k x\|^2|\mathcal{F}]
&=&E[\|(\mathcal{A}\otimes I_m)(I_{k+1}(A)\psi_{k}x-\psi_{k}x)\|^2|\mathcal{F}]\nonumber\\
&\leq &E[\|I_{k+1}(A)\psi_{k}x-\psi_{k}x\|^2|\mathcal{F}]<s\rho
\end{eqnarray}
and
\begin{eqnarray}\label{sp}
&&\sum_{j=1}^{q-1} \mathcal{B}[b_j,b_{j+1}]  E[\|z_{k,b_j}-z_{k,b_{j+1}}\|^2|\mathcal{F}]\leq
\sum_{1\leq i<j\leq n}\mathcal{B}[i,j] E[\|z_{k,i}-z_{k,j}\|^2|\mathcal{F}]\nonumber\\
&\leq & 2\sum_{1\leq i<j\leq n}\mathcal{B}[i,j] (E[\|A_{k+1,i}z_{k,i}-A_{k+1,j}z_{k,j}\|^2+\|(I_m-A_{k+1,i})z_{k,i}-(I_m-A_{k+1,j})z_{k,j}\|^2|\mathcal{F}]) \nonumber\\
&\leq& 2s\rho+2n^2s\rho.
\end{eqnarray}
Similar to \dref{eb112}, by \textit{Cauchy-Schwarz inequality} and  \dref{sp},
\begin{eqnarray}\label{zij}
E[\|z_{k,i}-z_{k,j}\|^2|\mathcal{F}]&\leq &q\sum_{j=1}^{q-1} E[\|z_{k,b_j}-z_{k,b_{j+1}}\|^2|\mathcal{F}]< \frac{2qns\rho(1+n^2)}{\min_{j\in[1,q-1]}\mathcal{B}[b_j,b_{j+1}]},\nonumber\\
&<&\frac{\rho}{256h^3n^3},\quad    i<j.
\end{eqnarray}
Since $\mathcal{A}$ is a stochastic matrix,
\begin{eqnarray}
E[\|\psi_k x-(\mathcal{A}\otimes I_m)\psi_k x\|^2|\mathcal{F}]
=\sum_{i=1}^{n}E\bigg[\bigg\|\sum_{j=1}^{n}\mathcal{A}[i,j](z_{k,i}-z_{k,j})\bigg\|^2\bigg|\mathcal{F}\bigg]<\frac{\rho}{256h^3n},\nonumber
\end{eqnarray}
which together with \dref{29} leads to
\begin{eqnarray}\label{kk}
E[\|\psi_{k+1}x-\psi_k x\|^2|\mathcal{F}]
&\leq & 2E[\|\psi_{k+1}x-(\mathcal{A}\otimes I_m)\psi_k x\|^2|\mathcal{F}]+2E[\|\psi_k x-(\mathcal{A}\otimes I_m)\psi_k x\|^2|\mathcal{F}]\nonumber\\
&<&\frac{\rho}{64h^3n}.
\end{eqnarray}
Note that \dref{kk} holds for $k=0,\ldots,h-1$, by  \textit{ Cauchy-Schwarz inequality} again, for $k\in [1,h]$,
\begin{eqnarray*}
E[\|\psi_k x-x\|^2|\mathcal{F}]&\leq& k\sum_{j=1}^{k}E[\|\psi_j x-\psi_{j-1} x\|^2|\mathcal{F}]<h^2\left(\frac{\rho}{64h^3n}\right)=\frac{\rho}{64hn},
\end{eqnarray*}
which is exactly \dref{ind}.

 Now, let $k=0$ in \dref{zij}, it yields that
 $\|x_1-x_i\|^2<\frac{\rho}{16hn^2}$  for all $i>1$.
   Since $\sum_{i=1}^{n}\|x_i\|^2=1$,
    $$\|x_1\|^2\geq \frac{1}{2n-1}-\frac{1}{16hn^2}\rho>\frac{1}{4n}.$$
  Moreover, by applying \textit{Cauchy-Schwarz inequality},
 \begin{eqnarray}
&&\frac{1}{2}x^{\tau}(I_{mn}-I_{k+1}^2(A))x\nonumber\\
&\leq &x^{\tau}\psi_{k}^{\tau}(I_{mn}-I_{k+1}^2(A))\psi_{k}x+(\psi_{k+1}x- x)^{\tau}(I_{mn}-I_{k+1}^2(A))(\psi_{k+1}x- x)\nonumber\\
&\leq &x^{\tau}\psi_{k}^{\tau}(I_{mn}-I_{k+1}^2(A))\psi_{k}x +\|\psi_{k+1}x- x\|^2,\nonumber
\end{eqnarray}
therefore,
\begin{eqnarray}
x^{\tau}E\left[I_{mn}-\psi_{h}^{\tau}\psi_{h}\big|\mathcal{F}\right]x
&\geq &\sum_{k=0}^{h-1}x^{\tau}E[\psi_{k}^{\tau}(I_{mn}-I_{k+1}^2(A))\psi_{k}|\mathcal{F}]x\nonumber\\
&\geq &\frac{1}{2}\sum_{k=0}^{h-1}x^{\tau}E[(I_{mn}-I_{k+1}^2(A))|\mathcal{F}]x-\sum_{k=0}^{h-1}E[\|\psi_{k+1}x- x\|^2|\mathcal{F}]\nonumber\\
&=&\frac{1}{2}\sum_{i=1}^{n}x_i^{\tau}E\left[\sum_{k=0}^{h-1}(I_{m}-A_{k+1,i}^2)\bigg|\mathcal{F}\right]x_i-\sum_{k=0}^{h-1}E[\|\psi_{k+1}x- x\|^2|\mathcal{F}]\nonumber\\
&\geq &\frac{1}{4}x_1^{\tau}E\left[\sum_{k=0}^{h-1}\sum_{i=1}^{n}(I_{m}-A_{k+1,i}^2)\bigg|\mathcal{F}\right]x_1-\frac{h}{2}\sum_{i=2}^{n}\|x_1-x_i\|^2-\frac{\rho}{64n}\nonumber\\
&\geq & \frac{\rho}{16n}-\frac{\rho}{32n}-\frac{\rho}{64n}>s\rho,\nonumber
\end{eqnarray}
which contradicts to \dref{fz}. So, on every trajectory,
 $$x^{\tau}E\left[I_{mn}-\psi_{h}^{\tau}\psi_{h}\big|\mathcal{F}\right]x\geq s\rho$$
  holds for all unit vector $x\in\mathbb{R}^{mn}$ and  Lemma \ref{gjb2} follows.
\end{proof}

Taking $m=1$ and $h=1$ 
in Lemma \ref{gjb2} gives
\begin{corollary}\label{lsl}
Let $c=(c_1,\ldots,c_n)^{\tau}\in\mathbb{R}^{n}$ be a sequence of random variables satisfying $ c_i\in[0,1]$ for all $i\in[1,n]$. Denote $I(c)\triangleq\mbox{diag}\lbrace c_1,\ldots,c_n\rbrace$, then there is a constant $s\in(0,1)$ depending on $\mathcal{A}$   such that
\begin{eqnarray}
\lambda_{max}(I(c)\mathcal{A}^{\tau}\mathcal{A}I(c))\leq 1-s\sum_{i=1}^n(1-c_i^2).\nonumber
\end{eqnarray}
\end{corollary}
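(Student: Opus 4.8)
The plan is to obtain Corollary \ref{lsl} as the degenerate case $m=1$, $h=1$ of Lemma \ref{gjb2}, so that essentially no new work is required beyond a careful identification of the objects. First I would specialize $m=1$: then $I_m=1$, the Kronecker factor $\mathcal{A}\otimes I_m$ collapses to $\mathcal{A}$, and each symmetric matrix $A_{1,i}$ becomes a scalar which I identify with $c_i$. The admissibility constraint $0\le A_{1,i}\le I_m$ of Lemma \ref{gjb2} is then exactly $c_i\in[0,1]$, which holds by hypothesis, so the $c_i$ form a legitimate choice of $\{A_{1,i}\}$ and $I_1(A)=I(c)$.

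Next I would specialize $h=1$. By \dref{ap} this gives $\psi_0=I_n$ and $\psi_1=(\mathcal{A}\otimes I_1)I_1(A)=\mathcal{A}I(c)$, whence, using that $I(c)$ is diagonal and hence symmetric, $\psi_1^{\tau}\psi_1=I(c)\mathcal{A}^{\tau}\mathcal{A}I(c)$. To discharge the conditional expectation appearing in Lemma \ref{gjb2}, I would invoke the lemma with the $\sigma$-algebra $\mathcal{F}=\sigma(c_1,\ldots,c_n)$ (equivalently, fix an arbitrary realization of $c$ and apply the lemma to deterministic entries). Since every quantity in sight is then $\mathcal{F}$-measurable, the conditional expectations act as the identity, so $E[I_{n}-\psi_1^{\tau}\psi_1\mid\mathcal{F}]=I_n-I(c)\mathcal{A}^{\tau}\mathcal{A}I(c)$ and $E[\sum_{i=1}^n(1-c_i^2)\mid\mathcal{F}]=\sum_{i=1}^n(1-c_i^2)$. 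Lemma \ref{gjb2} then delivers, for the constant $s\in(0,1)$ it supplies (which, $h=1$ being fixed, depends only on $\mathcal{A}$), the inequality $\lambda_{min}(I_n-I(c)\mathcal{A}^{\tau}\mathcal{A}I(c))\ge s\sum_{i=1}^n(1-c_i^2)$.

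Finally I would convert this into the claimed bound. As $B\triangleq I(c)\mathcal{A}^{\tau}\mathcal{A}I(c)$ is symmetric, its eigenvalues and those of $I_n-B$ are related by $\lambda_{min}(I_n-B)=1-\lambda_{max}(B)$, so the last inequality rearranges precisely to $\lambda_{max}(I(c)\mathcal{A}^{\tau}\mathcal{A}I(c))\le 1-s\sum_{i=1}^n(1-c_i^2)$, as required; in passing this also shows the right-hand side is nonnegative, i.e.\ $B\le I_n$. The only point demanding any care is this passage from the conditional-expectation formulation of Lemma \ref{gjb2} to the pathwise, deterministic inequality of the corollary, and it is settled entirely by the lemma's freedom in the choice of $\mathcal{F}$; there is no genuine analytic obstacle, since the corollary is a pure specialization.
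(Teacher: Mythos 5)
Your proposal is correct and coincides with the paper's own derivation, which obtains Corollary \ref{lsl} precisely by taking $m=1$ and $h=1$ in Lemma \ref{gjb2}; your identifications $A_{1,i}=c_i$, $\psi_1^{\tau}\psi_1=I(c)\mathcal{A}^{\tau}\mathcal{A}I(c)$, and the rearrangement $\lambda_{min}(I_n-B)=1-\lambda_{max}(B)$ are exactly the (unwritten) bookkeeping behind that one-line specialization. Your handling of the conditional expectation via $\mathcal{F}=\sigma(c_1,\ldots,c_n)$ is also consistent with the lemma, whose proof establishes the bound on every trajectory for an arbitrary $\sigma$-algebra.
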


\begin{proof}[Proof of Theorem \ref{ls1}]
First, we show the sufficiency.  Without loss of generality,  assume $\lim\limits_{k\rightarrow+\infty}P_{k+1,1}^{-1}=+\infty$.
Since $m=1$,
\begin{eqnarray}\label{1w}
\widetilde\Theta_{k+1}=\mathcal{A}(I-F_k)\widetilde\Theta_{k}+\mathcal{A}L_{k}V_{k}.
\end{eqnarray}
Denoting $\Lambda_k=E\left[\widetilde\Theta_{k}\widetilde\Theta^{\tau}_{k}\right]$, Assumption A3 shows
\begin{eqnarray}\label{jj}
\Lambda_{k+1}=\mathcal{A}(I-F_k)\Lambda_k(I-F_k)\mathcal{A}^{\tau}+\mathcal{A}L_{k}E[V_{k}V_{k}^{\tau}] L_{k} \mathcal{A}^{\tau}.\,\,\,
\end{eqnarray}
In view of Assumption A2, 
 applying  \emph{Neumann inequality} and   Corollary \ref{lsl} leads to
\begin{eqnarray}\label{wss}
\mbox{tr}(\Lambda_{k+1})&\leq &\left(1-s\sum_{i=1}^{n}(1-(P_{k+1,i}P_{k,i}^{-1})^2)\right)\mbox{tr}(\Lambda_{k})+nM\sum_{i=1}^{n}P_{k+1,i}^2\phi_{k,i}^2.
\end{eqnarray}
Because $\lim_{k\rightarrow+\infty}P_{k+1,1}^{-1}=+\infty$,
$$\prod_{k=0}^{+\infty}(1-(1-(P_{k+1,1}P_{k,1}^{-1})^2))=\prod_{k=0}^{+\infty} (P_{k+1,1}P_{k,1}^{-1})^2   =0,$$
 which infers
\begin{eqnarray}\label{sumPphiinf}
\sum_{k=0}^{+\infty}(1-(P_{k+1,1}P_{k,1}^{-1})^2)=+\infty.
\end{eqnarray}
Furthermore,
\begin{eqnarray}\label{wsj}
\sum_{i=1}^{n}\sum_{k=0}^{+\infty}P_{k+1,i}^2\phi_{k,i}^2&=&\sum_{i=1}^{n}\sum_{k=0}^{+\infty}(1-P_{k+1,i}P^{-1}_{k,i})P_{k+1,i}\nonumber\\
&< &   \sum_{i=1}^{n}\sum_{k=0}^{+\infty} \left( P^{-1}_{k+1,i}P_{k,i}-1      \right)P_{k+1,i}\leq \sum_{i=1}^{n}P_{0,i}<+\infty,
\end{eqnarray}
 we thus conclude $\lim_{k\rightarrow+\infty}\mbox{tr}(\Lambda_{k})=0$ from Lemma \ref{bkl}(ii).

To prove the strong consistency,  let $\mathcal{G}_{k}=\sigma\lbrace V_{l}, 0\leq l\leq k-1\rbrace$. Then,
\dref{1w} and Corollary \ref{lsl} yield
\begin{eqnarray*}\label{kjy}
E[\widetilde\Theta_{k+1}^{\tau}\widetilde\Theta_{k+1}|\mathcal{G}_{k}]\leq \widetilde\Theta_{k}^{\tau}\widetilde\Theta_{k}-s\sum_{j=1}^{n}(1-(P_{k+1,j}P_{k,j}^{-1})^2)\|\widetilde\Theta_{k}\|^2+nM\sum_{i=1}^{n}P_{k+1,i}^2\phi_{k,i}^2.
\end{eqnarray*}
Since $\widetilde\Theta_{k}^{\tau}\widetilde\Theta_{k}\in\mathcal{G}_{k}$, by  \dref{wsj} and
 Lemma \ref{ty}, $
\widetilde\Theta_{k+1}^{\tau}\widetilde\Theta_{k+1}\rightarrow0
$ as $k\rightarrow+\infty$   almost surely
with the convergence rate
\begin{eqnarray}\label{sumPThe}
\sum_{k=0}^{+\infty}\sum_{i=1}^{n}(1-(P_{k+1,i}P_{k,i}^{-1})^2)\|\widetilde\Theta_{k}\|^2<+\infty,\quad\mbox{a.s.}.
\end{eqnarray}

Now, we prove the necessity under
$$\lim_{k\rightarrow+\infty}\sum_{i=1}^{n}P_{k+1,i}^{-1}<+\infty.$$
In this case,
\begin{eqnarray}\label{pk+lphi}
\sum_{k=0}^{+\infty}\sum_{i=1}^{n}P_{k+1,i}\phi_{k,i}^2<+\infty.
\end{eqnarray}
Denote $\Pi_{k}\triangleq\prod_{i=k}^{0}\mathcal{A}(I_n-F_i),$
we first prove  $\lim_{k\rightarrow+\infty}\Pi_{k}$ exists. 
In fact,  since $\mathcal{A}$ is an  irreducible and aperiodic doubly stochastic matrix, we have $\lim_{k\rightarrow}\mathcal{A}^{k}=\frac{1}{n}\cdot\textbf{1}\textbf{1}^{\tau}$. Then, by \dref{pk+lphi}, given
any $\varepsilon>0$, 
there is a $k_1>0$ such that
 \begin{eqnarray}\label{APphi}
\left\{
\begin{array}{l}
\sum_{k=k_1}^{+\infty}\sum_{i=1}^{n}P_{k+1,i}\phi_{k,i}^2<\frac{\varepsilon}{3}\\
\|\mathcal{A}^{k}-\mathcal{A}^{l}\|_1<\frac{\varepsilon}{3},\quad \forall k,l>k_1
\end{array},
\right.
\end{eqnarray}
here $\|X\|_1\triangleq \max_{1\leq j\leq r}\sum_{i=1}^{p}X[i,j]$ for any $X\in \mathbb{R}^{p\times r}$, $p,r\geq 1$.

Therefore, for every $k>2k_1$,
\begin{eqnarray}\label{Pijj1}
\|\Pi_{k}-\mathcal{A}^{k-k_1}\Pi_{k_1}\|_1 &=&\left\|\sum_{j=k_1}^{k-1}\mathcal{A}^{k-j-1}(\Pi_{j+1}-\mathcal{A}\Pi_{j})\right\|_1=\left\|\sum_{j=k_1}^{k-1}\mathcal{A}^{k-j}F_{j+1}\Pi_{j}\right\|_1
\nonumber\\ &\leq &\sum_{j=k_1}^{k-1}\left\|\mathcal{A}^{k-j}F_{j+1}\Pi_{j}\right\|_1\leq\sum_{j=k_1}^{k-1}\sum_{i=1}^{n}P_{j+2,i}\phi_{j+1,i}^2<\frac{\varepsilon}{3}.
\end{eqnarray}
Combining \dref{APphi} and \dref{Pijj1} infers that  for all $k,l>2k_1$,
\begin{eqnarray}\label{pkl}
\|\Pi_{k}-\Pi_{l}\|_1 &\leq &\|\Pi_{k}-\mathcal{A}^{k-k_1}\Pi_{k_1}\|_1+\|\Pi_{l}-\mathcal{A}^{l-k_1}\Pi_{k_1}\|_1+\|(\mathcal{A}^{l-k_1}-\mathcal{A}^{k-k_1})\Pi_{k_1}\|_1\nonumber\\
&<&\frac{\varepsilon}{3}+\frac{\varepsilon}{3}+\frac{\varepsilon}{3}=\varepsilon,
\end{eqnarray}
which means $\lim_{k\rightarrow+\infty}\Pi_{k}$ exists.

Now, denote $\Pi\triangleq\lim_{k\rightarrow+\infty}\Pi_{k}$. Observe that
 \begin{eqnarray*}
\left\{
\begin{array}{l}
\Pi_{k+1}=\mathcal{A}(I_n-F_{k+1})\Pi_{k}\\
\lim_{k\rightarrow+\infty}\mathcal{A}(I_n-F_k)=\mathcal{A}
\end{array},
\right.
\end{eqnarray*}
then $\Pi=\mathcal{A}\Pi$. Consequently,  $\Pi=\textbf{1}\cdot (\mu_1,\ldots,\mu_n)$ for some $\mu_i\geq 0,  i=1,\ldots,n$.
We now prove  $\mu_i>0$ for all  $ i=1,\ldots,n$. First,   \dref{pk+lphi} infers that there is a $k_2>k_1$ such that $\sum_{k=k_2}^{+\infty}\sum_{i=1}^{n}P_{k+1,i}\phi_{k,i}^2<1$. Furthermore,
 $1-P_{k+1,i}\phi_{k,i}^2>0$ for all $k\geq 0$, $i=1,\ldots,n$ and $\lim_{k\rightarrow}\mathcal{A}^{k}=\frac{1}{n}\cdot\textbf{1}\textbf{1}^{\tau}$,
 we then conclude that as long as $k_2$ is sufficiently large,
\begin{eqnarray}\label{Pik2}
\min_{i,j}\Pi_{k_2}[i,j]>0.
\end{eqnarray}
Further, since $\mathcal{A}$ is a doubly stochastic matrix,  for all $k\geq0$,
\begin{eqnarray}
\min_{i,j}\Pi_{k+1}[i,j]
\geq \left(1-\max_{j}P_{k+1,j}\phi_{k,j}^2\right)\min_{i,j}\Pi_{k}[i,j].\nonumber
\end{eqnarray}
As a result, by \dref{APphi} and \dref{Pik2},
\begin{eqnarray}
&&\min\lbrace \mu_1,\ldots,\mu_n\rbrace=\liminf_{k\rightarrow+\infty}\min_{i,j}\Pi_{k+1}[i,j]\geq\min_{i,j}\Pi_{k_2}[i,j]\prod_{k=k_2}^{+\infty}\left(1-\sum_{i=1}^{n}P_{k+1,i}\phi_{k,i}^2\right)>0.\nonumber
\end{eqnarray}
So, in view of \dref{jj},
\begin{eqnarray}\label{trthe0}
\liminf_{k\rightarrow+\infty}\mbox{tr}(\Lambda_{k+1})
&\geq &\liminf_{k\rightarrow+\infty}\mbox{tr}(\Pi_k\widetilde\Theta_{0}\widetilde\Theta_{0}^{\tau}\Pi_{k}^{\tau})=n\left(\sum_{j=1}^{n}\mu_j(\theta_{0,j}-\theta)\right)^2,
\end{eqnarray}
which infers $\liminf\limits_{k\rightarrow+\infty}E\widetilde\Theta_{k}^{\tau}\widetilde\Theta_{k}>0$  if  $\sum_{j=1}^{n}\mu_j(\theta_{0,j}-\theta)\not=0$.

The last part is addressed to proving $\widetilde\Theta_{k}\stackrel{p}{\nrightarrow}0$.
By  \dref{wss} and \dref{wsj}, Lemma \ref{bkl}(i) shows that $\lim_{k\rightarrow+\infty}E\|\widetilde\Theta_{k}\|^2$ exists. Denote
 \begin{eqnarray*}
\left\{
\begin{array}{l}
Q\triangleq\sup_{k\geq 0}E\|\widetilde\Theta_{k}\|^2\\
\Pi(k,i)\triangleq\prod_{j=k}^{k-i+1}\mathcal{A}(I_{n}-F_{k})
\end{array}.
\right.
\end{eqnarray*}
By \dref{wsj},  for any fixed $\varepsilon>0$, there is a $k_3>0$ such that
\begin{eqnarray}
\sum_{j=k_3}^{+\infty}\sum_{i=1}^{n}P_{j+1,i}^2\phi_{j,i}^2<\frac{\varepsilon}{4M},\nonumber
\end{eqnarray}
In addition, similar to \dref{pkl},  there is a $k_4>k_3$ such that for any $k,l>k_4$,
\begin{eqnarray}
\|\Pi(k,k-k_3-1)-\Pi(l,l-k_3-1)\|_2<\frac{\varepsilon}{2Q}.\nonumber
\end{eqnarray}

So, as long as  $k,l>k_4$,
\begin{eqnarray*}
\widetilde\Theta_{k+1}-\widetilde\Theta_{l+1}
&=&\Pi(k,k-k_3+1)\widetilde\Theta_{k_3}+\sum_{j=k_3}^{k}\Pi(k,k-j)\mathcal{A}L_{j}V_j\nonumber\\
&&-\Pi(l,l-k_3+1)\widetilde\Theta_{k_3}+\sum_{j=k_3}^{l}\Pi(l,l-j)\mathcal{A}L_{j}V_j,
\end{eqnarray*}
which infers
\begin{eqnarray}
&&E\|\widetilde\Theta_{k+1}-\widetilde\Theta_{l+1}\|^2\nonumber\\
&\leq & E\|(\Pi(k,k-k_3+1)-\Pi(l,l-k_3+1))\widetilde\Theta_{k_3}\|^2+2\sum_{j=k_3}^{\max\lbrace k,l\rbrace}E\|L_{j}V_j\|^2\nonumber\\
&<&\frac{\varepsilon}{2Q}\cdot Q+2M\sum_{j=k_3}^{+\infty}\sum_{i=1}^{n}P_{j+1,i}^2\phi_{j,i}^2<\varepsilon.\nonumber
\end{eqnarray}
This means $\lbrace \widetilde\Theta_{k}\rbrace_{k\geq 0}$ is a Cauchy sequence in $L^2(dP)$, and hence there exists a random vector $Z\in L^2(dP)$ such that $\lim_{k\rightarrow+\infty}E\|\widetilde\Theta_{k}-Z\|^2=0$. So, $\widetilde\Theta_{k}\stackrel{p}{\longrightarrow} Z$. 
Note that $Z\neq 0$ due to  $\lim_{k\rightarrow+\infty}E\|\widetilde\Theta_{k}\|^2\neq0$. 
\end{proof}

\section{Proofs of Theorems  \ref{ls2'}--\ref{ls2}}\label{proofls2}

Since a deterministic   parameter can be viewed as a   random variable having a degenerate Gaussian distribution with zero variance,  it suffices to prove  Remark \ref{randomthe} by  assuming that $\theta$ in   Theorems \ref{ls2'}--\ref{ls2} is random. In addition, let $\theta$ in Theorem \ref{ls2} be  Gaussian distributed.

We first prove a technical lemma.
Fix a $j^{*}\in\lbrace 1,\ldots,n\rbrace$ and let $d$ be the smallest integer that $\mathcal{A}^{d+1}[j^{*},j^{*}]>0$.   define a sequence of vectors in $\mathbb{R}^{mn}$:
 \begin{eqnarray*}
\left\{
\begin{array}{l}
\mathcal{P}_0\triangleq\left\lbrace C: C[m(j^{*}-1)+1,1]=0\right\rbrace\\
\mathcal{P}_l\triangleq\left\lbrace C: \sum_{k=1}^{n}b_{l,k}C[m(k-1)+1,1]=0\right\rbrace, l\in[1,d]
\end{array},
\right.
\end{eqnarray*}
where for $l\geq 1$ and $k=1,\ldots,n$,
$$b_{l,k}=\sum_{i\not=j^{*}}\sum_{i_1,\ldots,i_{l-1}}a_{j^{*}i}a_{ii_1}\ldots a_{i_{l-1}k}.$$

\begin{lemma}\label{two1}
Given $k\geq 1$, let $f=(f_1,\ldots,f_{mn})^{\tau}: \mathbb{R}^{k}\rightarrow \mathbb{R}^{mn}$ be a map that each $f_i(z)$ is a polynomial of $z\in \mathbb{R}^k, 1\leq i\leq mn$.   If
$f(\mathbb{R}^{k})\not\subset \mathcal{P}_l$ for some $l\in[0,d]$,
then for any nonempty  open set $U\subset\mathbb{R}^{k}$, there is a $z\in U$ such that $f(z)\not\in \mathcal{P}_l$.
\end{lemma}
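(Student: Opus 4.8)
The plan is to reduce the statement to the elementary fact that a nonzero real polynomial cannot vanish on a nonempty open set. First I would observe that each $\mathcal{P}_l$, $l\in[0,d]$, is a hyperplane in $\mathbb{R}^{mn}$: membership $C\in\mathcal{P}_l$ is a single linear constraint on the entries of $C$. Composing that linear functional with $f$ produces a scalar polynomial $g_l:\mathbb{R}^{k}\to\mathbb{R}$, namely
\begin{eqnarray*}
g_0(z)&=&f_{m(j^{*}-1)+1}(z),\\
g_l(z)&=&\sum_{r=1}^{n}b_{l,r}\,f_{m(r-1)+1}(z),\quad l\geq 1,
\end{eqnarray*}
so that $f(z)\in\mathcal{P}_l$ holds exactly when $g_l(z)=0$. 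In this language the hypothesis $f(\mathbb{R}^{k})\not\subset\mathcal{P}_l$ says precisely that $g_l$ is not the zero polynomial, and the desired conclusion says that $g_l$ does not vanish identically on the open set $U$.

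I would then argue by contraposition. Suppose $f(z)\in\mathcal{P}_l$ for every $z\in U$, i.e. $g_l\equiv 0$ on $U$. It suffices to show this forces $g_l\equiv 0$ on all of $\mathbb{R}^{k}$, which contradicts the hypothesis. Thus everything comes down to the claim that a polynomial vanishing on a nonempty open subset of $\mathbb{R}^{k}$ must be identically zero.

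To prove that claim I would induct on the number of variables $N$ (applied here with $N=k$). For $N=1$ a nonzero polynomial has only finitely many roots and hence cannot vanish on a nonempty open interval. For the inductive step, choose a box $\prod_{i=1}^{N}(\alpha_i,\beta_i)\subset U$, write the polynomial as $\sum_{j}c_j(z_1,\dots,z_{N-1})\,z_N^{\,j}$, and note that for each fixed $(z_1,\dots,z_{N-1})\in\prod_{i=1}^{N-1}(\alpha_i,\beta_i)$ the resulting univariate polynomial in $z_N$ vanishes on the interval $(\alpha_N,\beta_N)$, forcing each coefficient $c_j$ to vanish on the $(N-1)$-dimensional box. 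The induction hypothesis then yields $c_j\equiv 0$ for every $j$, whence the polynomial is identically zero.

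The argument is essentially routine; the only point needing a little care is the reduction step, namely checking that membership in each $\mathcal{P}_l$ is genuinely a single linear, hence polynomial, condition on $f$, so that the vector-valued problem collapses to the one-polynomial statement. An alternative to the induction would be to invoke real-analyticity of polynomials together with connectedness of $\mathbb{R}^{k}$, but I prefer the inductive proof since it is self-contained and avoids any appeal to analytic continuation.
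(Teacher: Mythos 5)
Your proposal is correct and takes essentially the same approach as the paper's proof: both reduce membership of $f(z)$ in $\mathcal{P}_l$ to the vanishing of a single scalar polynomial (namely $\sum_{r=1}^{n}b_{l,r}f_{m(r-1)+1}(z)$, or $f_{m(j^{*}-1)+1}(z)$ when $l=0$) and then invoke the fact that a polynomial vanishing on a nonempty open subset of $\mathbb{R}^{k}$ must be identically zero. The only differences are cosmetic: you supply an inductive proof of that standard fact, which the paper simply asserts, and you handle the case $l=0$ explicitly, which the paper's displayed formula formally covers only for $l\geq 1$.
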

\begin{proof}
Since $\sum_{k=1}^{n}b_{l,k}f_{m(k-1)+1}(z)$ is a polynomial, if for some nonempty  open set $U\subset\mathbb{R}^{k}$,
$$
\sum_{k=1}^{n}b_{l,k}f_{m(k-1)+1}(z)\equiv 0 \quad \mbox{for all}\,\,z\in U
$$
then the polynomial  must be identically zero on  $\mathbb{R}^{k}$. This contradicts to
$f(\mathbb{R}^{k})\not\subset \mathcal{P}_l, l\in[0,d]$.
\end{proof}

\begin{lemma}\label{four}
Let $C\in \mathbb{R}^{mn}$ be a vector and   $B_i\in\mathbb{R}^{m\times m}$, $i=1,\ldots,n$ be a sequence of positive definite matrices. Define a map $Q_0: \mathbb{R}^{mn}\rightarrow \mathbb{R}^{mn\times mn}$ by
\begin{eqnarray}
Q_0(z)\triangleq \mbox{diag}\lbrace(B_i^{-1}+v_iv_i^{\tau})^{-1}B_i^{-1},i=1,\ldots,n\rbrace,\nonumber
\end{eqnarray}
where $z=\mbox{col}\lbrace v_1,\ldots,v_n\rbrace$ and $v_{i}\in\mathbb{R}^{m}$, $1\leq i\leq n$. For each $l\in [0,d]$,\\
(i) if $C\not\in \mathcal{P}_{l+1}$, then for any nonempty open set $U\in\mathbb{R}^{mn}$,  there is a $z\in U$ such that
\begin{eqnarray}
(\mathcal{A}\otimes I_{m})(Q_{0}(z)C)\not\in\mathcal{P}_{l};\nonumber
\end{eqnarray}
(ii) if $C\not\in \mathcal{P}_{0}$, then for any nonempty open set $U\in\mathbb{R}^{mn}$,  there is a $z\in U$ such that
\begin{eqnarray}
(\mathcal{A}\otimes I_{m})(Q_{0}(z)C)\notin\mathcal{P}_{d}.\nonumber
\end{eqnarray}
\end{lemma}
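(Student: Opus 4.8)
The plan is to reduce both parts to the construction of a \emph{single} good point $z$ and then invoke Lemma \ref{two1}. First I would rewrite the diagonal blocks of $Q_0(z)$ by the Sherman--Morrison identity,
\[
(B_i^{-1}+v_iv_i^\tau)^{-1}B_i^{-1}=I_m-\frac{B_iv_iv_i^\tau}{1+v_i^\tau B_iv_i}=:M_i(v_i),
\]
so that the $i$-th block of $Q_0(z)C$ equals $C_i-\frac{(v_i^\tau C_i)B_iv_i}{1+v_i^\tau B_iv_i}$, a rational function of $z$ whose common denominator $\prod_{i=1}^n(1+v_i^\tau B_iv_i)$ is everywhere positive. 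Multiplying $(\mathcal{A}\otimes I_m)(Q_0(z)C)$ by this denominator produces a genuine polynomial map $f:\mathbb{R}^{mn}\to\mathbb{R}^{mn}$, and since each $\mathcal{P}_l$ is a linear (homogeneous) subspace, $f(z)\in\mathcal{P}_l$ iff $(\mathcal{A}\otimes I_m)(Q_0(z)C)\in\mathcal{P}_l$. Hence, by Lemma \ref{two1}, it suffices to exhibit \emph{one} vector $z^{*}\in\mathbb{R}^{mn}$ for which the required non-membership holds; density on every nonempty open set $U$ is then automatic.

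The computational heart is to track only the ``first coordinates'' $C_i[1]:=C[m(i-1)+1,1]$ of the blocks $C_i\in\mathbb{R}^m$, because every $\mathcal{P}_l$ is a linear condition on the vector $(C_1[1],\dots,C_n[1])^\tau$. Writing $y=(\mathcal{A}\otimes I_m)(Q_0(z)C)$ with blocks $y_j=\sum_i a_{ji}\,M_i(v_i)C_i$, one gets $y_j[1]=\sum_i a_{ji}(M_i(v_i)C_i)[1]$, so the functional defining $\mathcal{P}_l$ evaluated at $y$ equals
\[
\sum_k b_{l,k}y_k[1]=\sum_i\big(\textstyle\sum_k b_{l,k}a_{ki}\big)(M_i(v_i)C_i)[1]=\sum_i b_{l+1,i}(M_i(v_i)C_i)[1],
\]
using the walk-composition identity $\sum_k b_{l,k}a_{ki}=b_{l+1,i}$ (appending one edge lengthens every admissible walk by one). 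This single formula drives both parts.

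For part (i) I would simply take $z^{*}=0$, so that $Q_0(0)=I_{mn}$ and the displayed functional collapses to $\sum_i b_{l+1,i}C_i[1]$, which is exactly the functional defining $\mathcal{P}_{l+1}$ applied to $C$; since $C\notin\mathcal{P}_{l+1}$ by hypothesis this is nonzero, giving $y\notin\mathcal{P}_l$. (At the base index one uses that $d\ge 1$ forces $a_{j^{*}j^{*}}=0$, so the $\mathcal{P}_0$-functional pulls back under $\mathcal{A}$ to precisely the $\mathcal{P}_1$-functional.) Part (ii) is the genuinely delicate case, and here $z^{*}=0$ is useless: it only reduces the $\mathcal{P}_d$-functional of $y$ to $\sum_i b_{d+1,i}C_i[1]$, about which $C\notin\mathcal{P}_0$ says nothing directly. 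The decisive observation is that the coefficient of $C_{j^{*}}$ in this expression is $b_{d+1,j^{*}}=\mathcal{A}^{d+1}[j^{*},j^{*}]$, which is strictly positive \emph{precisely} because $d$ is the smallest integer with $\mathcal{A}^{d+1}[j^{*},j^{*}]>0$ (and $a_{j^{*}j^{*}}=0$ removes the only walk that would begin with a self-loop). I would therefore set $v_i=0$ for all $i\ne j^{*}$ and vary $v_{j^{*}}$ alone: the functional becomes $b_{d+1,j^{*}}(M_{j^{*}}(v_{j^{*}})C_{j^{*}})[1]+\mathrm{const}$, and taking $v_{j^{*}}=t\,e_1$ gives $(M_{j^{*}}(te_1)C_{j^{*}})[1]=C_{j^{*}}[1]/(1+t^2(B_{j^{*}})_{11})$, which is non-constant in $t$ because $C_{j^{*}}[1]\ne 0$ (as $C\notin\mathcal{P}_0$) and $(B_{j^{*}})_{11}>0$. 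A non-constant function cannot vanish identically, so some $t$ makes the whole functional nonzero, producing the desired $z^{*}$.

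The step I expect to be the main obstacle is part (ii): one must get the boundary bookkeeping exactly right---identifying the coefficient $b_{d+1,j^{*}}$ with $\mathcal{A}^{d+1}[j^{*},j^{*}]$ and invoking the minimality of $d$---and then prove the non-degeneracy that perturbing the single block $v_{j^{*}}$ already moves the functional off zero. The positive-definiteness of $B_{j^{*}}$ is essential here, since it guarantees that $e_1^\tau B_{j^{*}}v_{j^{*}}$ is a nontrivial linear form and hence that $(M_{j^{*}}(\cdot)C_{j^{*}})[1]$ is non-constant whenever $C_{j^{*}}\ne 0$. Everything else---the reduction through Lemma \ref{two1} after clearing the nonvanishing denominators, and the telescoping argument in part (i)---is routine once the first-coordinate functional identity $\sum_k b_{l,k}a_{ki}=b_{l+1,i}$ is in place.
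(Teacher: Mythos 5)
Your reduction and the $l\geq 1$ half of part (i) coincide with the paper's own proof: the paper likewise clears the everywhere-positive denominator $\prod_{k}(1+v_k^{\tau}B_kv_k)$, invokes Lemma \ref{two1}, and for $l\geq 1$ extracts the constant term of the cleared polynomial, which is exactly your evaluation at $z^{*}=0$ combined with the walk identity $\sum_k b_{l,k}a_{ki}=b_{l+1,i}$. Your part (ii) --- freezing $v_i=0$ for $i\neq j^{*}$, moving $v_{j^{*}}=te_1$, and noting that (writing $C_i[1]$ for $C[m(i-1)+1,1]$) the functional becomes $b\cdot C_{j^{*}}[1]/(1+t^{2}B_{j^{*}}[1,1])+\mathrm{const}$ --- is a direct-construction variant of the paper's contradiction argument, which instead compares the constant term with the coefficient of $x_{1j^{*}}^{2}$ and contradicts the minimality of $d$; for $d\geq 1$ your version is correct and arguably cleaner.

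The genuine gap is the case $d=0$, which you dispose of by asserting that ``$d\geq 1$ forces $a_{j^{*}j^{*}}=0$'' --- but no hypothesis gives $d\geq 1$. Nothing in the lemma or in Assumption A1$'$ excludes $a_{j^{*}j^{*}}>0$; on the contrary, the paper's remark after Assumption A1 highlights networks with a self-loop at every node, and there $d=0$ for every choice of $j^{*}$. In that case your part (i) at $l=0$ fails as written: at $z^{*}=0$ the $\mathcal{P}_0$-functional of $y$ is $\sum_{i}a_{j^{*}i}C_i[1]=a_{j^{*}j^{*}}C_{j^{*}}[1]+\sum_{i\neq j^{*}}a_{j^{*}i}C_i[1]$, and the diagonal term can exactly cancel the nonzero $\mathcal{P}_1$-functional, so $z^{*}=0$ need not be a witness. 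In part (ii) with $d=0$, your identity $b_{d+1,j^{*}}=\mathcal{A}^{d+1}[j^{*},j^{*}]$ is false under the paper's definition of $b_{l,k}$ (the first step excludes $j^{*}$, so $b_{1,j^{*}}=0$, whereas $\mathcal{A}^{1}[j^{*},j^{*}]=a_{j^{*}j^{*}}>0$); the coefficient actually produced by pulling back the $\mathcal{P}_0$-functional is $a_{j^{*}j^{*}}$, so your conclusion survives, but not via the identity you state. The paper sidesteps all of this uniformly: rather than evaluating at $z=0$, it extracts the coefficient of $x_{1j^{*}}^{2}$ in the cleared polynomial, which equals $B_{j^{*}}[1,1]\sum_{k\neq j^{*}}a_{j^{*}k}C_k[1]$ --- the diagonal term cancels identically inside that coefficient --- and this works for every $d$, including $d=0$. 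Your own one-block deformation would also repair part (i): with $v_{j^{*}}=te_1$ and the other blocks zero, the functional equals $a_{j^{*}j^{*}}C_{j^{*}}[1]/(1+t^{2}B_{j^{*}}[1,1])+\sum_{i\neq j^{*}}a_{j^{*}i}C_i[1]$, which tends to the nonzero $\mathcal{P}_1$-functional as $t\to+\infty$; but as submitted, the $d=0$ case is a real hole.
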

\begin{proof}
(i) Let $D(z)\triangleq(\mathcal{A}\otimes I_{m})Q_0(z)C$, then
\begin{eqnarray}
D[m(i-1)+1,1](z)&=&-\sum_{k=1}^{n}a_{ik}\frac{v_k^{\tau}C_{1}^{(k)}B_kv_k}{1+v_k^{\tau}B_kv_k}+\sum_{k=1}^{n}a_{ik}C[m(k-1)+1,1],\nonumber
\end{eqnarray}
where $C_{1}^{(k)}\in\mathbb{R}^{m\times m}$ satisfies
\begin{eqnarray*}
C_1^{(k)}=\begin{bmatrix}
C[m(k-1)+1,1] & 0& \ldots & 0\\
\vdots & \vdots & \ddots & \vdots\\
C[mk,1]& 0 & \ldots &0\\
\end{bmatrix},\quad k=1,\ldots,n.
\end{eqnarray*}
Becasue each component of  $ D(z)\prod_{k=1}^n (1+v_k^{\tau}B_kv_k)$ is a polynomial and $\prod_{k=1}^n (1+v_k^{\tau}B_kv_k)>0$, in view of Lemma \ref{two1}, it is sufficient to prove
$$     D(\mathbb{R}^{mn})\prod_{k=1}^n (1+v_k^{\tau}B_kv_k)\not\subset\mathcal{P}_l.$$

Suppose $(\mathcal{A}\otimes I_{m})(Q_{0}(\mathbb{R}^{mn})C)\subset\mathcal{P}_{l}$ and let $v_j=(x_{1j},\ldots,x_{mj})^{\tau}$, $j\in [1,n]$.   If  $l\geq 1$,  the constant term of
\begin{eqnarray}
\prod_{k=1}^{n}(1+v_k^{\tau}B_kv_k)\sum_{k=1}^{n}b_{l,k}D[m(k-1)+1,1]\nonumber
\end{eqnarray}
is
\begin{eqnarray}
\sum_{k=1}^{n}\sum_{i=1}^{n}b_{l,i}a_{i,k}C[m(k-1)+1,1]=\sum_{k=1}^{n}b_{l+1,k}C[m(k-1)+1,1]=0,\nonumber
\end{eqnarray}
which implies $C\in \mathcal{P}_{l+1}$. It contradicts to $C\not\in \mathcal{P}_{l+1}$.
If $l=0$,  the  coefficient of $x_{1j^{*}}^2$ of
\begin{eqnarray}
\prod_{k=1}^{n}(1+v_k^{\tau}B_kv_k)D[m(j^{*}-1)+1,1]\nonumber
\end{eqnarray}
is
\begin{eqnarray}
B_{j^{*}}[1,1]\sum_{k\not=j^{*}}a_{j^{*}k}C[m(k-1)+1,1]=0,\nonumber
\end{eqnarray}
which implies $C\in \mathcal{P}_1$ since $B_j$ is positive definite. Hence,  it leads to a contradiction again.\\
(ii) If $(\mathcal{A}\otimes I_{m})(Q_{0}(\mathbb{R}^{mn})C)\subset\mathcal{P}_{d}$, then the coefficient of $x_{1j^{*}}^2$  and the constant term of
\begin{eqnarray}\label{1vb}
\prod_{k=1}^{n}(1+v_k^{\tau}B_kv_k)\sum_{k=1}^{n}b_{d,k}D[m(k-1)+1,1]\nonumber
\end{eqnarray}
are
\begin{eqnarray}\label{2vb}
B_{j^{*}}[1,1]\sum_{k\not=j^{*}}\sum_{i=1}^{n}b_{d,i}a_{i,k}C[m(k-1)+1,1]=0,\nonumber
\end{eqnarray}
and
\begin{eqnarray}
\sum_{k=1}^{n}\sum_{i=1}^{n}b_{d,i}a_{i,k}C[m(k-1)+1,1]=0,\nonumber
\end{eqnarray}
respectively. As a result,
\begin{eqnarray}
\sum_{i=1}^{n}b_{d,i}a_{i,j^{*}}C[m(j^{*}-1)+1,1]
=\sum_{i\not=j^{*}}\sum_{i_1,\ldots,i_{d}}a_{j^{*}i}a_{ii_1}\ldots a_{i_{d}j^{*}}C[m(j^{*}-1)+1,1]=0.\nonumber
\end{eqnarray}
So, by $C\notin \mathcal{P}_0$,  $$\sum_{i\not=j^{*}}\sum_{i_1,\ldots,i_{d}}a_{j^{*}i}a_{ii_1}\ldots a_{i_{d}j^{*}}=0.$$
Hence
\begin{eqnarray}
\mathcal{A}^{d+1}[j^{*},j^{*}]&=&\sum_{i=1}^{n}\sum_{i_1,\ldots,i_{d}}a_{j^{*}i}a_{ii_1}\ldots a_{i_{d}j^{*}}=a_{j^{*}j^{*}}\left(\sum_{i_1,\ldots,i_{d}}a_{j^{*}i_1}\ldots a_{i_{d}j^{*}}\right),\nonumber
\end{eqnarray}
 which together with $\mathcal{A}^{d+1}[j,j]>0$ implies
\begin{eqnarray}
\mathcal{A}^{d}[j^{*},j^{*}]=\sum_{i_1,\ldots,i_{d}}a_{j^{*}i_1}\ldots a_{i_{d}j^{*}}>0.\nonumber
\end{eqnarray}
This contradicts to the definition of $d$.
\end{proof}
Now, letting
\begin{eqnarray}
z=\mbox{col}\lbrace\underbrace{\textbf{0},\ldots, \textbf{0}}_{j^{*}-1},v,\underbrace{\textbf{0},\ldots, \textbf{0}}_{n-j^{*}}\rbrace,\nonumber
\end{eqnarray}
in Lemma \ref{four} shows
\begin{corollary}\label{four1}
Let $C\in \mathbb{R}^{mn}$ and $B\in\mathbb{R}^{m\times m}$ be a vector and a positive definite matrix.  Denote $Q_0^{*}: \mathbb{R}^{m}\rightarrow \mathbb{R}^{mn\times mn}$ by
\begin{eqnarray}
Q_0^{*}(v)\triangleq\mbox{diag}\lbrace\underbrace{I_m,\ldots, I_m}_{j^{*}-1},(B^{-1}+vv^{\tau})^{-1}B^{-1},\underbrace{I_m,\ldots, I_m}_{n-j^{*}}\rbrace,\nonumber
\end{eqnarray}
\\
(i) If $C\not\in \mathcal{P}_{l+1}$, then for any nonempty open set $U\in\mathbb{R}^{m}$,  there is $z\in U$ such that
\begin{eqnarray}
(\mathcal{A}\otimes I_{m})(Q_{0}^{*}(z)C)\not\in\mathcal{P}_{l}.\nonumber
\end{eqnarray}
(ii) If $C\not\in \mathcal{P}_{0}$, then for any nonempty open set $U\in\mathbb{R}^{m}$,  there is $z\in U$ such that
\begin{eqnarray}
(\mathcal{A}\otimes I_{m})(Q_{0}^{*}(z)C)\not\subset\mathcal{P}_{d}.\nonumber
\end{eqnarray}
\end{corollary}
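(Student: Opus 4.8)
The plan is to derive Corollary \ref{four1} directly from Lemma \ref{four} by specializing its argument to the coordinate slice in which every block of $z$ except the $j^{*}$-th is set to the zero vector.

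First I would verify the algebraic identification. Writing $z=\mbox{col}\lbrace \mathbf{0},\ldots,\mathbf{0},v,\mathbf{0},\ldots,\mathbf{0}\rbrace$ with $v$ occupying the $j^{*}$-th block, and choosing $B_{j^{*}}=B$ together with arbitrary positive definite matrices $B_i$ for $i\neq j^{*}$, the defining expression of $Q_0$ in Lemma \ref{four} collapses to $Q_0^{*}(v)$: indeed $v_i=\mathbf{0}$ forces the $i$-th block $(B_i^{-1}+v_iv_i^{\tau})^{-1}B_i^{-1}$ to equal $I_m$, while the $j^{*}$-th block becomes $(B^{-1}+vv^{\tau})^{-1}B^{-1}$. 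Hence $(\mathcal{A}\otimes I_{m})(Q_{0}(z)C)=(\mathcal{A}\otimes I_{m})(Q_{0}^{*}(v)C)$ along this slice.

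The one point that is not purely formal is that the statement of Lemma \ref{four} quantifies over open sets in $\mathbb{R}^{mn}$, whereas the corollary quantifies over open sets $U\subset\mathbb{R}^{m}$; the image of such a $U$ under the embedding $v\mapsto z$ lies in a proper linear subspace and is therefore not open in $\mathbb{R}^{mn}$, so the statement of Lemma \ref{four} cannot be invoked as a black box. Instead I would re-run its proof. Recall that the proof reduces, via Lemma \ref{two1}, to showing that the polynomial $\prod_{k=1}^{n}(1+v_k^{\tau}B_kv_k)\sum_{k=1}^{n}b_{l,k}D[m(k-1)+1,1]$ (and its $l=0$ and part-(ii) analogues) is not identically zero. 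The decisive observation is that the distinguishing coefficient extracted there---the constant term when $l\geq 1$, the coefficient of $x_{1j^{*}}^2$ when $l=0$, and the two coefficients used for part (ii)---involves only $C$ and the $j^{*}$-th block variable $v=v_{j^{*}}$, and never the remaining blocks. Under the hypothesis $C\notin\mathcal{P}_{l+1}$ (resp.\ $C\notin\mathcal{P}_0$) these coefficients are nonzero exactly as in Lemma \ref{four}, so the polynomial, now regarded as a polynomial in $v\in\mathbb{R}^{m}$ alone, is still not identically zero.

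With this in hand, applying Lemma \ref{two1} in $\mathbb{R}^{m}$ (that is, with $k=m$) to the map $v\mapsto(\mathcal{A}\otimes I_{m})(Q_{0}^{*}(v)C)$ yields, for every nonempty open $U\subset\mathbb{R}^{m}$, a point $v\in U$ at which the relevant polynomial is nonzero, which is precisely $(\mathcal{A}\otimes I_{m})(Q_{0}^{*}(v)C)\notin\mathcal{P}_{l}$ in case (i) and $\notin\mathcal{P}_{d}$ in case (ii). The main obstacle is the verification just described---that the non-vanishing coefficient used in the proof of Lemma \ref{four} survives the restriction to the $j^{*}$-th block---since once that is settled, all that remains is the same polynomial non-vanishing argument already carried out for the lemma.
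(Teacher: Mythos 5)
Your proposal is correct and follows essentially the same route as the paper, which obtains Corollary \ref{four1} simply by setting $z=\mbox{col}\lbrace\mathbf{0},\ldots,\mathbf{0},v,\mathbf{0},\ldots,\mathbf{0}\rbrace$ in Lemma \ref{four} without further comment. In fact you are more careful than the paper: you correctly observe that the slice is not open in $\mathbb{R}^{mn}$ so Lemma \ref{four} cannot be cited as a black box, and your verification that the distinguishing coefficients (the constant term for $l\geq 1$, the coefficient of $x_{1j^{*}}^2$ for $l=0$, and the pair used in part (ii)) depend only on $C$ and the $j^{*}$-th block variable is exactly the observation that legitimizes the paper's one-line derivation via Lemma \ref{two1} applied in $\mathbb{R}^{m}$.
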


The next lemma with the proof given in Appendix \ref{AppA}   is the main reason for  the failure of the diffusion RLS in Remark \ref{randomthe}.
We introduce some necessary  notations.
For  $C\in \mathbb{R}^{mn}$ and $B\in\mathbb{R}^{m\times m}$  defined in Corollary \ref{four1}, denote  maps $Q_1: \mathbb{R}^{m}\rightarrow \mathbb{R}^{mn\times mn}$, $Q_2: \mathbb{R}^{m}\times \mathbb{R}^{m}\rightarrow \mathbb{R}^{mn\times mn}$ and $Q_3: \mathbb{R}^{m}\times \mathbb{R}^{m}\rightarrow \mathbb{R}^{mn\times mn}$ by
\begin{eqnarray}
\left\{
\begin{array}{l}
Q_1(v_1)\triangleq \mbox{diag}\lbrace\underbrace{I_m,\ldots, I_m}_{j^{*}-1},B_1B^{-1},\underbrace{I_m,\ldots, I_m}_{n-j^{*}}\rbrace\nonumber\\
Q_2(v_1,v_2)
\triangleq \mbox{diag}\lbrace\underbrace{I_m,\ldots, I_m}_{j^{*}-1},B_2B_1^{-1},\underbrace{I_m,\ldots, I_m}_{n-j^{*}}\rbrace\nonumber\\
Q_3(v_1,v_2)\triangleq (\mathcal{A}\otimes I_{m}) Q_2(v_1,v_2)(\mathcal{A}\otimes I_{m})Q_1(v_1)C\nonumber
\end{array},
\right.
\end{eqnarray}
where $B_1\triangleq(B^{-1}+v_1v_1^{\tau})^{-1}$, $B_2\triangleq(B^{-1}+v_1v_1^{\tau}+v_2v_2^{\tau})^{-1}$ and $v_1,v_2\in \mathbb{R}^{m}$.

\begin{lemma}\label{12}
Let $a_{lj^{*}}>0$ for some $l\in [1,n]$, where $j^{*}$ is the fixed index defined before. If $C\not\in\mathcal{P}_1$, then for any $L>0$, there exist some $v_1,v_2\in \mathbb{R}^{m}$ such that
\begin{eqnarray*}
\left\{
\begin{array}{l}
(\mathcal{A}\otimes I_{m})(Q_1(v_1))C\not\in \mathcal{P}_0\\
Q_3(v_1,v_2)\not\in \mathcal{P}_d\\
|Q_3(v_1,v_2)[m(l-1)+1,1]|>L
\end{array}.
\right.
\end{eqnarray*}
\end{lemma}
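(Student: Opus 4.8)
The plan is to recast the three requirements as conditions on the pair $(v_1,v_2)\in\mathbb{R}^m\times\mathbb{R}^m$ and to separate the two \emph{genericity} conditions from the single \emph{largeness} condition. Writing $w(v_1)\triangleq(\mathcal{A}\otimes I_m)Q_1(v_1)C$, the first requirement $w(v_1)\notin\mathcal{P}_0$ is the non-vanishing of a polynomial in $v_1$ (after clearing the positive factor $1+v_1^{\tau}Bv_1$), and Corollary \ref{four1}(i) applied with $l=0$ to $C\notin\mathcal{P}_1$ shows this polynomial is not identically zero. Likewise, the second requirement $Q_3(v_1,v_2)=(\mathcal{A}\otimes I_m)Q_2(v_1,v_2)w(v_1)\notin\mathcal{P}_d$ is the non-vanishing of a polynomial $P_d(v_1,v_2)$; fixing any $\bar v_1$ with $w(\bar v_1)\notin\mathcal{P}_0$ and applying Corollary \ref{four1}(ii) with base matrix $B_1(\bar v_1)$ to the vector $w(\bar v_1)$ produces a $\bar v_2$ with $Q_3(\bar v_1,\bar v_2)\notin\mathcal{P}_d$, so $P_d\not\equiv0$ as well. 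Hence, by Lemma \ref{two1}, the \emph{good set} $G\triangleq\{(v_1,v_2):w(v_1)\notin\mathcal{P}_0,\ Q_3(v_1,v_2)\notin\mathcal{P}_d\}$ is the complement of the union of two proper algebraic subsets, and is therefore open and dense in $\mathbb{R}^{2m}$.

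Next I would establish the largeness. Since every denominator $1+v^{\tau}(\cdot)v$ is strictly positive, the scalar $R(v_1,v_2)\triangleq Q_3(v_1,v_2)[m(l-1)+1,1]$ is a rational function that is continuous and finite on all of $\mathbb{R}^{2m}$. Using $B_1B^{-1}=I-\frac{Bv_1v_1^{\tau}}{1+v_1^{\tau}Bv_1}$ and $B_2B_1^{-1}=I-\frac{B_1v_2v_2^{\tau}}{1+v_2^{\tau}B_1v_2}$, one finds
\begin{eqnarray}
R(v_1,v_2)=\sum_{k\ne j^{*}}a_{lk}[w_k]_1+a_{lj^{*}}\left([w_{j^{*}}]_1-\frac{(v_2^{\tau}w_{j^{*}})[B_1v_2]_1}{1+v_2^{\tau}B_1v_2}\right),\nonumber
\end{eqnarray}
where $w_k$ is the $k$-th $m$-block of $w(v_1)$ and $[\cdot]_1$ its first entry. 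For bounded $v_1$ the matrix $B_1$ stays uniformly positive definite and $R$ is bounded, so divergence has to be forced by degenerating $B_1$. Taking $v_1=s\,u_1$ with $\|u_1\|=1$ gives $B_1\to B_1^{\infty}=B-\frac{Bu_1u_1^{\tau}B}{u_1^{\tau}Bu_1}$ and $u_1^{\tau}B_1u_1\sim s^{-2}$, so $u_1$ is the near-kernel of $B_1$. Scaling $v_2=c_0s\,u_1+g$ along this near-kernel keeps the denominator bounded, $1+v_2^{\tau}B_1v_2\to 1+c_0^2+g^{\tau}B_1^{\infty}g$, while $v_2^{\tau}w_{j^{*}}\sim c_0s\,\kappa$ with $\kappa=u_1^{\tau}\sum_{k\ne j^{*}}a_{j^{*}k}C_k$ (the $C_{j^{*}}$ contribution dies because $u_1^{\tau}M_1^{\infty}=0$), and $[B_1v_2]_1\to[B_1^{\infty}g]_1=:\eta$. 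Consequently $R\sim-\frac{a_{lj^{*}}c_0\kappa\eta}{1+c_0^2+g^{\tau}B_1^{\infty}g}\,s\to\pm\infty$, so $R$ is unbounded on $\mathbb{R}^{2m}$, provided $\kappa\eta\ne0$ and $a_{lj^{*}}>0$.

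I would then combine the two parts. For any $L>0$, unboundedness and continuity of $R$ give a point $x_{*}$ with $|R(x_{*})|>L$, hence $|R|>L$ on an open neighborhood $N$ of $x_{*}$; since $G$ is dense, $N\cap G$ is nonempty, and any $(v_1,v_2)\in N\cap G$ satisfies all three conclusions of the lemma simultaneously.

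The main obstacle is the unboundedness step, specifically guaranteeing the leading coefficient $\kappa\eta\ne0$, which is where the hypotheses $C\notin\mathcal{P}_1$ and $a_{lj^{*}}>0$ must be exploited. The factor $\eta=[B_1^{\infty}g]_1$ is nonzero for generic $g$ because $B_1^{\infty}$ has rank $m-1\ge1$; the delicate factor is $\kappa=u_1^{\tau}\sum_{k\ne j^{*}}a_{j^{*}k}C_k$, which can be made nonzero by a suitable divergence direction $u_1$ exactly when $\sum_{k\ne j^{*}}a_{j^{*}k}C_k\ne0$. I would argue that this vector is forced to be nonzero under the stated hypotheses by tracing how the $\mathcal{P}_1$-functional is assembled from the one-step products through $j^{*}$ and using $a_{lj^{*}}>0$, and I would dispose of the remaining degenerate configurations either by enlarging the divergent family with an extra parameter or by reversing the order of the two data injections $v_1,v_2$. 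Everything else reduces to routine continuity estimates and Sherman--Morrison bookkeeping.
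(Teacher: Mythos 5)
Your architecture---an open--dense \emph{good set} $G$ supplied by Corollary \ref{four1} and Lemma \ref{two1}, intersected with the open set where the continuous rational function $R=Q_3[m(l-1)+1,1]$ exceeds $L$---is sound, and your formula for $R$ agrees with the paper's (the paper's $v_2^{\tau}D_1B_1v_2$ is exactly your $(v_2^{\tau}w_{j^{*}})[B_1v_2]_1$). But the crux, the unboundedness of $R$, is not closed as written, and one of its two nondegeneracy claims is justified incorrectly. You assert that $\eta=[B_1^{\infty}g]_1$ is nonzero for generic $g$ ``because $B_1^{\infty}$ has rank $m-1\geq 1$''; this is false reasoning: $[B_1^{\infty}g]_1=e_1^{\tau}B_1^{\infty}g$ and $\ker B_1^{\infty}=\mbox{span}\{u_1\}$, so $\eta$ vanishes for \emph{every} $g$ precisely when $u_1\parallel e_1$ --- rank $m-1$ only makes $B_1^{\infty}g$ generically nonzero, not its first entry. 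This matters because the most natural choice for your other factor, $u_1=e_1$, is exactly the choice that kills $\eta$. Conversely, the factor you flag as ``the main obstacle,'' $\kappa\neq 0$, requires none of your fallback plans: since $b_{1,k}=a_{j^{*}k}$ for $k\neq j^{*}$ (and $b_{1,j^{*}}=0$), the hypothesis $C\notin\mathcal{P}_1$ says verbatim that $\sum_{k\neq j^{*}}a_{j^{*}k}C[m(k-1)+1,1]\neq 0$, i.e.\ the \emph{first entry} of $\zeta\triangleq\sum_{k\neq j^{*}}a_{j^{*}k}C_k$ is nonzero (this is exactly how the paper's own proof of Lemma \ref{four} uses $\mathcal{P}_1$); the hypothesis $a_{lj^{*}}>0$ is needed only so the divergent term survives in $R$. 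The one-line repair: pick $u_1$ on the unit sphere with $u_1^{\tau}\zeta\neq 0$ and $u_1\not\parallel e_1$ (both conditions are generic and compatible since $\zeta\neq0$ and $m>1$), whence $e_1\notin\ker B_1^{\infty}$ and generic $g$ gives $\eta\neq 0$; your coupled scaling $v_1=su_1$, $v_2=c_0su_1+g$ then yields $R=-a_{lj^{*}}c_0\kappa\eta\,(1+c_0^2+g^{\tau}B_1^{\infty}g)^{-1}s+O(1)$ as claimed, and the density argument finishes.

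With that patch your proof is complete and takes a genuinely different route from the paper's. The paper reduces largeness to showing the matrix $K=2cB_1-(D_1B_1+B_1D_1^{\tau})$ is not positive semidefinite (after which a radial blow-up $v_2=r_2z_2$ pushes $\frac{v_2^{\tau}D_1B_1v_2}{1+v_2^{\tau}B_1v_2}$ past the threshold $c$), and verifies this via the sign of the $2\times 2$ leading principal minor of $K$ along $v_1=r_1(q_1,q_2,0,\ldots,0)^{\tau}$ with $q_2\neq0$ and $r_1\to\infty$, the hypothesis $C\notin\mathcal{P}_1$ entering through the coefficient of $x^3$ of a polynomial identity in $x=q_1/q_2$. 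Your coupled blow-up along the near-kernel of $B_1$ obtains linear divergence of $R$ in $s$ directly and dispenses with the paper's adjugate estimates and minor computation, at the price of the $u_1$-versus-$e_1$ subtlety above --- which is in fact the same phenomenon as the paper's requirement $q_2\neq 0$. The genericity bookkeeping (the paper perturbs inside small neighborhoods via Corollary \ref{four1}; you intersect a dense $G$ with an open set) is logically identical.
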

\begin{lemma}\label{13}
Let $C\in \mathbb{R}^{mn}$ and $\{B_i\in\mathbb{R}^{m\times m}\}$ be defined in Lemma \ref{four}. For any $K>0$, if $C\not\in\mathcal{P}_{d}$, then there exists some $z_j=\mbox{col}\lbrace v_{j,1},\ldots,v_{j,n}\rbrace\in\mathbb{R}^{mn}$, $j\in[1,m]$, such that
\begin{eqnarray*}
\left\{
\begin{array}{l}
\inf_{i\in[1,n]}\lambda_{min}\left(B^{-1}_i+\sum_{j=1}^{m}v_{j,i}v_{j,i}^{\tau}\right)>K\\
\prod_{k=j}^{1}(\mathcal{A}\otimes I_m)G_k(z_1,\ldots,z_k)C\not\in\mathcal{P}_{d-j},\quad j\in[1,m]
\end{array},
\right.
\end{eqnarray*}
 where $\mathcal{P}_{-l}\triangleq\mathcal{P}_{d-l+1}$, $l\geq 1$ and
 \begin{eqnarray*}
\left\{
\begin{array}{l}
G_j(z_1,\ldots,z_j)\triangleq \mbox{diag} \left\lbrace B_{1,j}B_{1,j-1}^{-1},\ldots,B_{n,j}B_{n,j-1}^{-1}\right\rbrace\\
B_{i,j}\triangleq (B_i^{-1}+\sum_{k=1}^{j}v_{k,i}v_{k,i}^{\tau})^{-1}, \quad 1\leq i\leq n
\end{array}
\right.
\end{eqnarray*}
for  $1\leq j\leq m$ and  $B_{i,0}\triangleq B_{i}$, $1\leq i\leq n$.
\end{lemma}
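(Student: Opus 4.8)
The plan is to build the vectors $z_1,\ldots,z_m$ one block-layer at a time by induction on $j$, invoking Lemma \ref{four} at each step with the \emph{currently accumulated} positive definite matrices $\{B_{i,j-1}\}$ playing the role of $\{B_i\}$. Writing $C_0\triangleq C$ and $C_j\triangleq(\mathcal{A}\otimes I_m)G_j(z_1,\ldots,z_j)C_{j-1}$, so that $C_j=\prod_{k=j}^{1}(\mathcal{A}\otimes I_m)G_k C$ is exactly the vector appearing in the second requirement, the key identity is $B_{i,j}B_{i,j-1}^{-1}=(B_{i,j-1}^{-1}+v_{j,i}v_{j,i}^{\tau})^{-1}B_{i,j-1}^{-1}$. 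This shows that $G_j(z_1,\ldots,z_j)$ coincides with the map $Q_0$ of Lemma \ref{four} assembled from the matrices $B_{i,j-1}$ and the fresh block vectors $v_{j,i}$, so each step is a single application of that lemma. Along the way I would note that every $B_{i,j-1}$ is positive definite, being a sum of the positive definite $B_i^{-1}$ and rank-one terms, so Lemma \ref{four} is legitimately applicable at each stage.

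For the index bookkeeping, observe that under the cyclic convention $\mathcal{P}_{-l}=\mathcal{P}_{d-l+1}$ the target subspaces $\mathcal{P}_{d-j}$, $j=0,1,2,\ldots$, run through $\mathcal{P}_d,\mathcal{P}_{d-1},\ldots,\mathcal{P}_1,\mathcal{P}_0$ and then repeat with period $d+1$. The inductive hypothesis at step $j$ is $C_{j-1}\notin\mathcal{P}_{d-(j-1)}$ and the goal is $C_j\notin\mathcal{P}_{d-j}$, the target index being one less than the previous one modulo $d+1$. When the previous index is positive I would apply Lemma \ref{four}(i) with $l+1$ equal to that previous index, and at the single wrap-around from $\mathcal{P}_0$ back to $\mathcal{P}_d$ I would apply Lemma \ref{four}(ii). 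The base case $C_0=C\notin\mathcal{P}_d$ is the hypothesis of the statement.

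The freedom that renders the two requirements compatible is that Lemma \ref{four} produces a valid $z_j$ inside \emph{any} prescribed nonempty open set. To force the eigenvalue growth I would fix an orthonormal basis $e_1,\ldots,e_m$ of $\mathbb{R}^m$ and a scalar $R$ with $R^2>K$, and at step $j$ take the open set to be a small ball $U_j$ of radius $\eta$ about $z_j^{\ast}\triangleq\mbox{col}\{Re_j,\ldots,Re_j\}\in\mathbb{R}^{mn}$. Lemma \ref{four} then yields a $z_j\in U_j$ meeting the non-membership requirement, so every block obeys $\|v_{j,i}-Re_j\|<\eta$. After all $m$ steps, $\sum_{j=1}^{m}v_{j,i}v_{j,i}^{\tau}$ differs from $R^2I_m=R^2\sum_{j}e_je_j^{\tau}$ by a symmetric perturbation of norm at most $2Rm\eta+m\eta^2$; choosing $\eta$ small in advance (depending only on $R$, $m$, $K$) gives $\sum_{j=1}^{m}v_{j,i}v_{j,i}^{\tau}\succeq KI_m$ uniformly in $i$, whence $\lambda_{min}(B_i^{-1}+\sum_{j=1}^{m}v_{j,i}v_{j,i}^{\tau})>K$ because $B_i^{-1}\succ 0$.

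The main obstacle is precisely this reconciliation: Lemma \ref{four} only asserts the \emph{existence} of a good $z_j$ somewhere in a chosen open set and gives no quantitative control over its magnitude, whereas the eigenvalue bound demands that the selected vectors be uniformly large and spread across all $m$ directions at every node. The resolution is to pre-commit to the large, orthonormally-spread targets $z_j^{\ast}$ and to exploit that the radius $\eta$ can be fixed before the induction begins, so that the unavoidable perturbations coming from Lemma \ref{four} never erode the accumulated minimal eigenvalue below $K$. Once this is arranged, the two displayed conclusions of the lemma hold simultaneously and the induction closes.
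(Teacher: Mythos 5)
Your proposal is correct and takes essentially the same route as the paper's proof: both build the $z_j$ inductively, applying Lemma \ref{four} at each step with the accumulated matrices $B_{i,j-1}$ in place of $B_i$ (your block identity $B_{i,j}B_{i,j-1}^{-1}=(B_{i,j-1}^{-1}+v_{j,i}v_{j,i}^{\tau})^{-1}B_{i,j-1}^{-1}$ is exactly what the paper uses implicitly), and both secure the eigenvalue bound by confining each choice to a small neighbourhood of a large reference vector spread over the coordinate directions (the paper takes $z_j^{*}=\mbox{col}\lbrace\sqrt{nK}\,e_j,\ldots,\sqrt{nK}\,e_j\rbrace$ and invokes continuity of $\inf_{i}\lambda_{min}\bigl(B_i^{-1}+\sum_{j}v_{j,i}v_{j,i}^{\tau}\bigr)$, while you take $Re_j$ with $R^2>K$ and fix the radius $\eta$ in advance through an explicit perturbation bound). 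The only differences are cosmetic refinements: your quantitative $\eta$-bound replaces the paper's continuity argument, and you spell out the cyclic index bookkeeping under $\mathcal{P}_{-l}=\mathcal{P}_{d-l+1}$ (using Lemma \ref{four}(ii) whenever the previous index is $0$, which can occur more than once if $m>d+1$) where the paper compresses this into ``an analogous argument''.
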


\begin{proof}
Let $e_{j}$ denote the $j$th column of the identity matrix $I_m$, $j\in[1,m]$ and
$$z_j^{*}=\mbox{col}\lbrace v_{j,1}^{*},\ldots,v_{j,n}^{*}\rbrace=\mbox{col}\lbrace \sqrt{nK} \cdot e_{j},\ldots,\sqrt{nK}\cdot e_{j}\rbrace.$$
Then,  for $i\in[1,n]$,
\begin{eqnarray}
\lambda_{min}\left(B^{-1}_i+\sum_{j=1}^{m}v_{j,i}^{*}(v_{j,i}^{*})^{\tau}\right)&\geq &\lambda_{min}\left(\sum_{j=1}^{m}v_{j,i}^{*}(v_{j,i}^{*})^{\tau}\right)=nK>K.\nonumber
\end{eqnarray}
Since
$$\lambda(z_1, z_{2},\ldots, z_{m})\triangleq\inf_{i\in[1,n]}\lambda_{min}\left(B^{-1}_i+\sum_{j=1}^{m}v_{j,i}v_{j,i}^{\tau}\right)$$
 is continuous in $z_1,\ldots,z_m$, there exists a neighbourhood $U_1$ of $z_1^{*}$ such that  $\lambda(s, z_{2}^{*},\ldots, z_{m}^{*})>K$ for all $s\in U_1$. By Lemma \ref{four}, there is a $z_1\in U_1$ such that $(\mathcal{A}\otimes I_m)G_1(z_1)C\not \in \mathcal{P}_{d-1}$.  An analogous argument shows that we can select a series of  $z_1,\ldots,z_m$   satisfying
 \begin{eqnarray*}
\left\{
\begin{array}{l}
\inf_{i\in[1,n]}\lambda_{min}\left(B^{-1}_i+\sum_{j=1}^{m}v_{j,i}v_{j,i}^{\tau}\right)>K\\
\prod_{k=j}^{1}(\mathcal{A}\otimes I_m)G_k(z_1,\ldots,z_k)C\not\in\mathcal{P}_{d-j}
\end{array},\,\,\, j\in[1,m],
\right.
\end{eqnarray*}
which  is exactly the result as desired.
\end{proof}

\begin{lemma}\label{gj}
Let $E\widetilde\Theta_{0}[m(j^{*}-1)+1,1]\not=0$ and   $a_{lj^{*}}>0$ for some $l\in[1,n]$. Then,  under Assumption A1', there is a sequence of deterministic matrices $\lbrace \Phi_{i}\rbrace_{i=0}^{+\infty}$ such that $$\lim_{t\rightarrow+\infty}\lambda_{min}\left(\sum_{i=0}^{t}\Phi_i\Phi_i^{\tau}\right)=+\infty$$
and for  $R_t\triangleq\prod_{i=t}^{0}(\mathcal{A}\otimes I_{m})(I_{mn}-F_i)E\widetilde\Theta_{0}$,
\begin{eqnarray}
\limsup_{t\rightarrow+\infty}\dfrac{|R_t[m(l-1)+1,1]|}{16(t+1)^4}> 1.\nonumber
\end{eqnarray}
\end{lemma}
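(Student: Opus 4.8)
The plan is to \emph{construct} the deterministic data $\{\Phi_i\}$ by concatenating finitely-many-step blocks, each an instance of one of the preparatory lemmas, so that the mean error $R_t=E\widetilde\Theta_{t+1}$ is steered to stay clear of the degenerate hyperplanes $\mathcal{P}_0,\ldots,\mathcal{P}_d$ while its $(m(l-1)+1)$-th coordinate is driven large at infinitely many times. The starting observation is that every factor $(\mathcal{A}\otimes I_m)(I_{mn}-F_i)$ in $R_t$ is block-diagonal of the form $(\mathcal{A}\otimes I_m)\,\mathrm{diag}\{P_{i+1,j}P_{i,j}^{-1}\}$ with $P_{i+1,j}P_{i,j}^{-1}=(P_{i,j}^{-1}+\phi_{i,j}\phi_{i,j}^{\tau})^{-1}P_{i,j}^{-1}$, which is exactly the one-step map appearing in Lemma \ref{four}, Corollary \ref{four1}, Lemma \ref{12} and Lemma \ref{13}. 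Thus choosing the regressors $\phi_{i,j}$ at each time amounts to choosing the vectors $v$ in those lemmas, and the evolution of $R_t$ is governed entirely by them. Since $E\widetilde\Theta_{0}[m(j^{*}-1)+1,1]\neq 0$ we have $E\widetilde\Theta_0\notin\mathcal{P}_0$, giving a valid initial condition.

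Second, I would run an infinite loop indexed by $r=1,2,\ldots$, maintaining at every stage a record of which hyperplane $\mathcal{P}_a$ the current vector avoids. \textbf{(i) Navigation:} apply Corollary \ref{four1}(i)--(ii) for a bounded number (at most $d$) of single steps, each exciting only node $j^{*}$, which decreases the avoided index by one and wraps $\mathcal{P}_0\mapsto\mathcal{P}_d$, until the current vector lies outside $\mathcal{P}_1$. \textbf{(ii) Injection:} apply Lemma \ref{12} (two steps, legitimate because $a_{lj^{*}}>0$ by hypothesis) with threshold $L=L_r$, producing a vector outside $\mathcal{P}_d$ whose $(m(l-1)+1)$-th coordinate exceeds $L_r$ in modulus; record the current time as $t_r$. \textbf{(iii) Excitation:} apply Lemma \ref{13} ($m$ steps) with parameter $K=K_r$, which keeps the vector out of the cyclically shifted $\mathcal{P}$-hyperplanes and forces $\min_{i}\lambda_{min}(P^{-1}_{\cdot,i})>K_r$ at every node. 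Then increment $r$. Since each cycle uses at most $d+m+2$ time steps, the injection times satisfy $t_r=O(r)$.

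Third, I would verify the two conclusions. For the excitation condition, note that $P_{t+1,j}^{-1}=I_m+\sum_{i=0}^{t}\phi_{i,j}\phi_{i,j}^{\tau}$ gives $\lambda_{min}\big(\sum_{i=0}^{t}\Phi_i\Phi_i^{\tau}\big)=\min_{j}\lambda_{min}(P_{t+1,j}^{-1})-1$; choosing $K_r\to+\infty$ (say $K_r=r$) in step (iii), together with monotonicity of the information matrices, yields $\min_{j}\lambda_{min}(P_{t,j}^{-1})\to+\infty$ and hence the required $\lambda_{min}(\sum_{i=0}^{t}\Phi_i\Phi_i^{\tau})\to+\infty$ (which is also \dref{Pinfinity}). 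For the blow-up, at each recorded time $t_r$ one has $R_{t_r}=Q_3$ in the notation of Lemma \ref{12}, so $|R_{t_r}[m(l-1)+1,1]|>L_r$; taking $L_r:=32(t_r+1)^4$ (admissible since $t_r$ is determined before step (ii) fixes $L_r$, and $L$ in Lemma \ref{12} is arbitrary) gives $|R_{t_r}[m(l-1)+1,1]|/\big(16(t_r+1)^4\big)>2$ for infinitely many $t_r$, whence $\limsup_{t}|R_t[m(l-1)+1,1]|/\big(16(t+1)^4\big)\geq 2>1$.

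The main obstacle is the index bookkeeping linking steps (i)--(iii): one must check that the three batch types can be chained so that each batch's input hypothesis (outside $\mathcal{P}_1$ for Lemma \ref{12}, outside $\mathcal{P}_d$ for Lemma \ref{13}) is supplied by the previous batch's output, exploiting the period-$(d+1)$ cyclic action of the one-step map on $\{\mathcal{P}_l\}$ and the wrap-around in Corollary \ref{four1}(ii)/Lemma \ref{four}(ii). A secondary point, which in fact simplifies the argument, is that the injected coordinate need not survive later multiplications: the ratio is evaluated exactly at the injection time $t_r$, so the subsequent contraction-prone excitation steps do no harm. Finally one confirms that the bounded per-cycle step count keeps $t_r$ polynomial in $r$, so the denominator $16(t_r+1)^4$ is always dominated by the freely chosen $L_r$.
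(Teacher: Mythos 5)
Your proposal is correct and follows essentially the same route as the paper's proof: a block-by-block induction that chains Lemma \ref{four}/Corollary \ref{four1} (hyperplane navigation through the cyclic family $\mathcal{P}_0,\ldots,\mathcal{P}_d$), Lemma \ref{13} (excitation with $K\to\infty$ to force $\lambda_{min}\left(\sum_{i=0}^{t}\Phi_i\Phi_i^{\tau}\right)\to+\infty$), and Lemma \ref{12} (injection with $L$ chosen as a large multiple of $(t+1)^4$ at the predetermined injection times). The only differences from the paper --- your reordering of the three phases within each cycle and your constant $32$ in place of the paper's $20$ --- are immaterial.
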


\begin{proof}
It suffices to  construct a series of deterministic $\lbrace \Phi_{i}\rbrace_{i=0}^{+\infty}$   such that for any $ k\geq 0$, $s\in[0,d]$ 
 and $t_k=k(m+3)(d+1)$,
 \begin{eqnarray}\label{dd}
\left\{
\begin{array}{l}
R_{t_k+j}\not\in\mathcal{P}_{d-j}\\
\lambda_{min}\left(\sum_{i=0}^{t_{k}+m}\Phi_{i}\Phi_{i}^{\tau}\right)>t_{k}+m\\
|R_{t_{k+1}}[m(l-1)+1,1]|>20(t_{k+1}+1)^4
\end{array}.
\right.
\end{eqnarray}

 First,  since $E\widetilde\Theta_{0}\not\in\mathcal{P}_0$, by Lemma \ref{four}, there is a
$\Phi_0$ such that  $R_0\not\in\mathcal{P}_d$.
Let $k=0$. In view of Lemma \ref{13}, we can find some
$\Phi_{t_k+j},j=1\ldots,m$,
 such that for all $j\in[1,m]$,
 \begin{eqnarray}\label{ddk=0}
\left\{
\begin{array}{l}
\lambda_{min}\left(\sum_{i=0}^{t_k+m}\Phi_i\Phi_i^{\tau}\right)>t_k+m\\
R_{t_k+j}=\prod_{i=j}^{1}(\mathcal{A}\otimes I_m)(I_{mn}-F_{t_k+i})R_{t_k}\not\in \mathcal{P}_{d-j}
\end{array}.
\right.
\end{eqnarray}
Moreover, by Lemma \ref{four}, there are some  $\Phi_{t_k+j},j=m+1,\ldots,(m+3)(d+1)-2$   
such that for all $j\in[m+1,(m+3)(d+1)-2]$,
\begin{eqnarray*}
R_{t_k+j}=\prod_{i=j}^{m+1}(\mathcal{A}\otimes I_m)(I_{mn}-F_{t_k+i})R_{t_k+m}\not\in \mathcal{P}_{d-j}.
\end{eqnarray*}
Finally,    by  noting that
$R_{t_k+(m+3)(d+1)-2}\not\in\mathcal{P}_{1}$,
Lemma \ref{12} indicates  that for some
 $$\Phi_{t_{k+1}-i}= \mbox{diag}\lbrace\underbrace{\textbf{0},\ldots, \textbf{0}}_{j^{*}-1},v_{i,j^*},\underbrace{\textbf{0},\ldots, \textbf{0}}_{n-j^{*}}\rbrace,\quad i=0,1,   $$
one has
  \begin{eqnarray}\label{ddk+1}
\left\{
\begin{array}{l}
R_{t_{k+1}-1}\not\in\mathcal{P}_{0},\quad R_{t_{k+1}}\not\in\mathcal{P}_{d}\\
|R_{t_{k+1}}[m(l-1)+1,1]|>20(t_{k+1}+1)^4
\end{array}.
\right.
\end{eqnarray}
So,  we obtain a series of $\{\Phi_j, j=0,\ldots,t_1\}$ fulfilling \dref{dd}.
By repeating \dref{ddk=0} to \dref{ddk+1} for all $k\geq 1$, \dref{dd} is proved immediately  based on the  mathematical induction.
\end{proof}

\begin{proof}[Proof of Theorem \ref{ls2'}]
Considering $\|E\widetilde{\Theta}_0\|\neq 0$ and    Assumption A1',  we suppose,   without loss of generality, there are some $j^{*}, l \in\lbrace 1,\ldots,n\rbrace$  such that      $E\widetilde\Theta_{0}[m(j^{*}-1)+1,1]\not=0$ and $a_{lj^{*}}>0.$
Let  $\lbrace\Phi_{k}\rbrace_{k=0}^{+\infty}$ be the  deterministic   sequence constructed in Lemma \ref{gj}.  Then, by virtue of   Assumption A2 and  \dref{1w},
\begin{eqnarray}\label{ETheR}
E(\widetilde\Theta_{k}[m(l-1)+1,1])=R_k[m(l-1)+1,1],
\end{eqnarray}
and hence
\begin{eqnarray*}
\sup_{k\geq 0}E\|\widetilde\Theta_{k}\|^2&\geq &\sup_{k\geq 0}\|E \widetilde\Theta_{k}\|^2\geq \sup_{k\geq 0}(E(\widetilde\Theta_{k}[m(l-1)+1,1]))^2\nonumber\\
&=&\sup_{k\geq 0}(R_k[m(l-1)+1,1])^2=+\infty,
\end{eqnarray*}
where $R_k$ is define in Lemma \ref{gj}.
\end{proof}

\begin{proof}[Proof of Theorem \ref{ls2}]
Let  $\lbrace\Phi_{k}\rbrace_{k=0}^{+\infty}$ be defined   in the proof of Theorem \ref{ls2'}.  Since  $\theta$ is Gaussian distributed,  $\widetilde\Theta_{k}[m(l-1)+1,1]$ possesses a normal distribution  by  Assumption A2''.  Note that
  for any random variable $\xi\sim N(E\xi, \sigma^2)$ and $k\geq 1$,
\begin{eqnarray}\label{st}
P\left(16(k+1)^{3}|\xi|< |E\xi|\right)
&=& I_{\lbrace\sigma\not=0\rbrace}\cdot\frac{1}{\sqrt{2\pi}}\int_{-\frac{1}{16(k+1)^{3}}\frac{|E\xi|}{|\sigma|}-\frac{E\xi}{\sigma}}^{\frac{1}{16(k+1)^{3}}\frac{|E\xi|}{|\sigma|}-\frac{E\xi}{\sigma}}e^{-\frac{x^2}{2}}dx\nonumber\\
&\leq & I_{\left\lbrace \frac{|E\xi|}{|\sigma|}<8k\right\rbrace}\cdot\frac{1}{\sqrt{2\pi}}\frac{1}{(k+1)^2}+I_{\left\lbrace \frac{|E\xi|}{|\sigma|}\geq 8k\right\rbrace}\cdot \frac{1}{\sqrt{2\pi}}\int_{4k}^{+\infty}e^{-\frac{x^2}{2}}dx\nonumber\\
&\leq & I_{\left\lbrace \frac{|E\xi|}{|\sigma|}<8k\right\rbrace}\cdot\frac{1}{\sqrt{2\pi}}\frac{1}{(k+1)^2}+I_{\left\lbrace \frac{|E\xi|}{|\sigma|}\geq 8k\right\rbrace}\cdot \frac{1}{\sqrt{2\pi}}\int_{4k}^{+\infty}\frac{2}{x^3}dx\nonumber\\
&\leq &\frac{1}{\sqrt{2\pi}}\frac{1}{k^2},
\end{eqnarray}
and
\begin{eqnarray}\label{gl}
P(|\xi|\leq \varepsilon)&=& I_{\lbrace\sigma\not=0\rbrace}\cdot\frac{1}{\sqrt{2\pi}}\int_{-\frac{\varepsilon}{|\sigma|}-\frac{E\xi}{\sigma}}^{\frac{\varepsilon}{|\sigma|}-\frac{E\xi}{\sigma}}e^{-\frac{x^2}{2}}dx+I_{\lbrace\sigma=0, |E\xi|\leq \varepsilon\rbrace}\nonumber\\
&\leq & 2\cdot I_{\lbrace |E\xi|\leq \varepsilon\rbrace}+I_{\lbrace\sigma>\varepsilon\rbrace}\cdot \frac{2}{\sqrt{2\pi}}+I_{\lbrace\sigma\leq \varepsilon, |E\xi|> \varepsilon\rbrace}\cdot\frac{1}{\sqrt{2\pi}}\int_{\frac{1}{\varepsilon}(|E\xi|-\varepsilon)}^{+\infty}e^{-\frac{x^2}{2}}dx.\nonumber
\end{eqnarray}
Define  $$D_0\triangleq\bigcap_{k=1}^{+\infty}\left\lbrace |\widetilde\Theta_{k}[m[l-1]+1,1]|\geq\dfrac{|R_k[m(l-1)+1,1]|}{16(k+1)^{3}}\right\rbrace,$$
then \dref{st} infers
\begin{eqnarray}
P(D_0)
&\geq & 1-\sum_{k=1}^{+\infty}P(\lbrace 16(k+1)^{3}|\widetilde\Theta_{k}[m(l-1)+1,1]|< |R_k[m(l-1)+1,1]|\rbrace)\nonumber\\
&\geq & 1-\frac{1}{\sqrt{2\pi}}\sum_{k=1}^{+\infty}\frac{1}{k^2}>0.\nonumber
\end{eqnarray}
According to Lemma \ref{gj} and \dref{ETheR},
\begin{eqnarray*}
\|\widetilde\Theta_{k}\|&\geq& |\widetilde\Theta_{k}[m(l-1)+1,1]|\geq  \frac{1}{16(k+1)^3}R_{k}[m(l-1)+1,1]\nonumber\\
&>&k+1,\quad  \mbox{i.o. on}~D_0.
\end{eqnarray*}
So,   \dref{ls2as} holds.  Moreover, by    Lemma \ref{gj} and \dref{ETheR} again,
$$\limsup_{k\rightarrow+\infty}|E\widetilde\Theta_{k}[m(l-1)+1,1]|=+\infty,$$
which together with \dref{gl} yields
\begin{eqnarray}
\liminf_{k\rightarrow+\infty}P(|\widetilde\Theta_{k}[m(l-1)+1,1]|\leq \varepsilon)\leq \frac{2}{\sqrt{2\pi}},\nonumber
\end{eqnarray}
 and hence
\begin{eqnarray*}
\limsup_{k\rightarrow+\infty}P(\|\widetilde\Theta_{k}\|>\varepsilon)\geq \limsup_{k\rightarrow+\infty}P(|\widetilde\Theta_{k}[m(l-1)+1,1]|
>\varepsilon)
\geq 1-\frac{2}{\sqrt{2\pi}}.
\end{eqnarray*}
The  proof is completed.
\end{proof}



\section{Concluding Remarks}\label{concluding}

We have established   the necessary and sufficient condition that ensures  the   diffusion RLS  converging to  true scalar  parameters.  This  condition shows that
cooperations among nodes  through diffusion networks indeed could help estimation, as long as the
 parameters to be identified are scalar. But for the general case where parameters are high dimensional,  our results reveal that the diffusion RLS do not necessarily outperform the individual RLS. On the other hand, the convergence theorem on the diffusion RM  in this paper and the relevant studies on the diffusion LMS    reflect that the ATC and CTA diffusion strategies might be very suitable for the adaptive algorithms in the form of the LMS-type.

\appendices

\appendixpage

\section*{Appendix A}\label{AppB}




\begin{proof}[Proof of Remark \ref{r1}(i)(b)]
The argument is based on the proof of Theorem \ref{ls1} from \dref{jj}--\dref{trthe0}. Considering \dref{Ee>0}, let $l$ be the smallest integer such that $\sum_{i=1}^{n}\phi_{l,i}^2\not=0$. An analogous proof of Theorem \ref{ls1} shows that for some $\mu'_i>0,i=1,\ldots,n$, $$\lim_{k\rightarrow+\infty}\Pi(k,k-l)=\textbf{1}\cdot(\mu'_1,\ldots,\mu'_n).$$
As a result,
 \begin{eqnarray}
\liminf_{k\rightarrow+\infty}\mbox{tr}(\Lambda_{k+1})
&\geq & \liminf_{k\rightarrow+\infty}\mbox{tr}(\Pi(k,k-l)\mathcal{A}L_{l}E[V_{l}V_{l}^{\tau}]L_{l}\mathcal{A}^{\tau}\Pi(k,k-l)^{\tau})\nonumber\\
&= &n\sum_{i=1}^{n}\bigg(\sum_{j=1}^{n}a_{ij}\mu'_{i}\bigg)^2P_{l+1,i}^2\phi_{l,i}^2E\varepsilon_{l,i}^2>0,\nonumber
\end{eqnarray}
and $\widetilde\Theta_{k}\stackrel{p}{\nrightarrow} 0$ follows as
proved in Theorem \ref{ls1}.
\end{proof}

\begin{proof}[Proof of Theorem \ref{lsr}]
Let
$$\mathcal{G}_{k}\triangleq\sigma\lbrace \Phi_{i}, V_{l}, 0\leq i\leq k, 0\leq l\leq k-1\rbrace,$$
then by \dref{1w} and Corollary \ref{lsl},
\begin{eqnarray*}
E[\widetilde\Theta_{k+1}^{\tau}\widetilde\Theta_{k+1}|\mathcal{G}_{k}]\leq \widetilde\Theta_{k}^{\tau}\widetilde\Theta_{k}-s\sum_{i=1}^{n}(1-(P_{k+1,i}P_{k,i}^{-1})^2)\|\widetilde\Theta_{k}\|^2+nM\sum_{i=1}^{n}P_{k+1,i}^2\phi_{k,i}^2.
\end{eqnarray*}
Since  $\widetilde\Theta_{k}^{\tau}\widetilde\Theta_{k}\in\mathcal{G}_{k}$, according to \dref{wsj} and
  \cite[Lemma 1.2.2]{ch02}, $
\lim_{k\rightarrow+\infty}\widetilde\Theta_{k+1}^{\tau}\widetilde\Theta_{k+1}
$ exists almost surely
and
\begin{eqnarray}\label{sl}
\sum_{k=0}^{+\infty}\sum_{i=1}^{n}(1-(P_{k+1,i}P_{k,i}^{-1})^2)\|\widetilde\Theta_{k}\|^2<+\infty,\quad\mbox{a.s.}.
\end{eqnarray}
Denote $\Theta_{\infty}\triangleq \lim_{k\rightarrow+\infty}\widetilde\Theta_{k}^{\tau}\widetilde\Theta_{k}$ and
\begin{eqnarray*}
\begin{array}{l}
 S\triangleq \lbrace \Theta_{\infty}\not=0\rbrace \cap \left \lbrace \lim_{k\rightarrow+\infty}\sum_{i=1}^nP_{k+1,i}^{-1}=+\infty\right\rbrace\\
 S'\triangleq\lbrace \Theta_{\infty}\not=0\rbrace \cap \left\lbrace \sum\limits_{k=0}^{+\infty}\sum\limits_{i=1}^{n}(1-(P_{k+1,i}P_{k,i}^{-1})^2)=+\infty \right\rbrace
 \end{array}.
\end{eqnarray*}
Note that by \dref{sumPphiinf},
\begin{eqnarray}
\bigg\lbrace \lim_{k\rightarrow+\infty}\sum_{i=1}^nP_{k+1,i}^{-1}=+\infty\bigg\rbrace\subset \bigg\lbrace \sum_{k=0}^{+\infty}\sum_{i=1}^{n}(1-(P_{k+1,i}P_{k,i}^{-1})^2)=+\infty \bigg\rbrace,\nonumber
\end{eqnarray}
then $S\subset S'$. Moreover,
\begin{eqnarray}
\sum_{k=0}^{+\infty}\sum_{i=1}^{n}(1-(P_{k+1,i}P_{k,i}^{-1})^2)\|\widetilde\Theta_{k}\|^2=+\infty\quad \mbox{on}\,\,S', \nonumber
\end{eqnarray}
which implies  $P(S)\leq P(S')=0$ by \dref{sl}.
\end{proof}

\section*{Appendix B}\label{AppA}

\begin{proof}[Proof of Lemma \ref{12}]
The first step is to seek a pair $(v_1,v_2)$ that
\begin{eqnarray}\label{Q3>L}
|Q_3(v_1,v_2)[m(l-1)+1,1]|>L.
\end{eqnarray}
To this end, denote $D(v_1)\triangleq(\mathcal{A}\otimes I_{m})Q_1(v_1)C.$
In the later discussion, we suppress $v_1$ in $D(v_1)$ for brevity.
Calculate
\begin{eqnarray}
&&Q_3(v_1,v_2)[m(l-1)+1,1]\nonumber\\
&=&((\mathcal{A}\otimes I_{m})(Q_2(v_1,v_2))D)[m(l-1)+1,1]\nonumber\\
&=&a_{lj^{*}}(1,0,\ldots,0)B_2B_1^{-1}\cdot(D[m(j^{*}-1)+1,1],\ldots,D[mj^{*},1])^{\tau}+\sum_{i\not=j^{*}}a_{li}D[m(i-1)+1,1]\nonumber\\
&=&a_{lj^{*}}(1,0,\ldots,0)\left(I_m-\frac{B_1v_2v_2^{\tau}}{1+v_2^{\tau}B_1v_2}\right)\cdot(D[m(j^{*}-1)+1,1],\ldots,D[mj^{*},1])^{\tau}\nonumber\\
&&+\sum_{i\not=j^{*}}a_{li}D[m(i-1),1]\nonumber\\
&=&-a_{lj^{*}}\frac{v_2^{\tau}D_1B_1v_2}{1+v_2^{\tau}B_1v_2}+\sum_{i=1}^{n}a_{li}D[m(i-1)+1,1]\nonumber,
\end{eqnarray}
where $D_1\in\mathbb{R}^{m\times m}$ is defined by
\begin{eqnarray}
D_1\triangleq\begin{bmatrix}
D[m(j^{*}-1)+1,1] & 0& \ldots & 0\\
\vdots & \vdots & \ddots & \vdots\\
D[mj^{*},1]& 0 & \ldots &0\\
\end{bmatrix}.\nonumber
\end{eqnarray}
Similarly, for all $i=1,\ldots,n$,
\begin{eqnarray}\label{d1}
D[m(i-1)+1,1]&=&-a_{ij^{*}}\frac{v_1^{\tau}C_1Bv_1}{1+v_1^{\tau}Bv_1}+\sum_{k=1}^{n}a_{ik}C[m(k-1)+1,1],\qquad
\end{eqnarray}
where $C_1\in\mathbb{R}^{m\times m}$ is defined by
\begin{eqnarray}
C_1\triangleq\begin{bmatrix}
C[m(j^{*}-1)+1,1] & 0& \ldots & 0\\
\vdots & \vdots & \ddots & \vdots\\
C[mj^{*},1]& 0 & \ldots &0\\
\end{bmatrix}.\nonumber
\end{eqnarray}

Now, write $v_i=r_iz_i,$  where $r_i>0$ and $|z_i|=1$, $i=1,2$.
Since for any $r_1>0$,
 $$a_{ij^{*}}\frac{|v_1^{\tau}C_1Bv_1|}{1+v_1^{\tau}Bv_1}=a_{ij^{*}}\frac{|z_1^{\tau}C_1Bz_1|}{r_1^{-2}+z_1^{\tau}Bz_1}< a_{ij^{*}}\frac{|z_1^{\tau}C_1Bz_1|}{z_1^{\tau}Bz_1},$$ 
it is trivial that
 $$a_{ij^{*}}\frac{|z_1^{\tau}C_1Bz_1|}{z_1^{\tau}Bz_1}\leq a_{ij^{*}} \lambda_{max}(B^{-1})\|B\|  \|C\|_1.$$
Then,  by \dref{d1},  for all $i=1,\ldots,n$, 
\begin{eqnarray}
|D[m(i-1)+1,1]|\leq (1+\lambda_{max}(B^{-1})\|B\|)  \|C\|_1,\nonumber
\end{eqnarray}
which infers
\begin{eqnarray}
Q_3(v_1,v_2)[m(l-1)+1,1]<-a_{lj^{*}}\frac{v_2^{\tau}D_1B_1v_2}{1+v_2^{\tau}B_1v_2}+(1+\lambda_{max}(B^{-1})\|B\|)  \|C\|_1.\nonumber
\end{eqnarray}

Next, for any $L>0$, denote
$$c\triangleq L\cdot a_{lj^{*}}^{-1}+a_{lj^{*}}^{-1}(1+\lambda_{max}(B^{-1})\|B\|)  \|C\|_1.$$
If we could find  a $v_1$ such that $D\not\in\mathcal{P}_0$ and
 $$K\triangleq{2cB_1-(D_1B_1+B_1D_1^{\tau})}$$ is not semi-positive definite, 
then there is a $v'_2$ such that for any $v_2$ in some sufficiently small neighbourhood of $v_2'$,
$$z_2^{\tau}(D_1-cI_m)B_1z_2>\frac{c}{r_2^2},$$
 which can deduce \dref{Q3>L}.
So, according to  Corollary \ref{four1}, there exists a  $v_2$ in this neighbourhood fulfilling both \dref{Q3>L} and $$Q_3(v_1,v_2)=(\mathcal{A}\otimes I_{m}) Q_2(v_1,v_2)D\not\in\mathcal{P}_d.$$

To construct the desired $v_1$, compute the  leading principal minor  of order $2$      of  $K$   by
\begin{eqnarray}
&&K[1,1]K[2,2]-K^2[1,2]\nonumber\\
&=&4(cB_1[1,1]-D_1[1,1]B_1[1,1])(cB_1[2,2]-D_1[2,1]B_1[1,2])\nonumber\\
&&-(2cB[1,2]-D_1[2,1]B_1[1,1]-D_1[1,1]B_1[1,2])^2\nonumber\\
&=&4c(c-D_1[1,1])(B_1[1,1]B_1[2,2]-B_1^2[1,2])-(D_1[1,1]B_1[1,2]-D_1[2,1]B_1[1,1])^2.\nonumber
\end{eqnarray}
Let  $z_1=(q_1,q_2,0,\ldots,0)^{\tau}$, where $q_1,q_2$ are two real numbers satisfying $q_1^2+q_2^2=1$ and $q_2\not=0$. Then,
\begin{eqnarray}\label{K12<0}
K[1,1]K[2,2]-K^2[1,2]<0
\end{eqnarray}
is equivalent to
\begin{eqnarray}\label{key}
&&4c(c-D_1[1,1])\left((B_1^{-1})^{\ast}[2,2]-\frac{((B_1^{-1})^{\ast}[1,2])^2}{(B_1^{-1})^{\ast}[1,1]}\right)\nonumber\\
&<&\frac{(D_1[1,1](B_1^{-1})^{\ast}[1,2]-D_1[2,1](B_1^{-1})^{\ast}[1,1])^2}{(B_1^{-1})^{\ast}[1,1]}.
\end{eqnarray}

Calculating the adjoint matrix of $B_1^{-1}$ shows that  there exist two constants $M_1,M_2>0$ depending on $B$ such that
$
|l_i|<M_2$ for $ i=1,2,3,
$
where
\begin{eqnarray}
\left\{
\begin{array}{l}
l_1\triangleq(B_1^{-1})^{\ast}[1,1]-r_1^2q_2^2M_1\\
l_2\triangleq(B_1^{-1})^{\ast}[1,2]+r_1^2q_1q_2M_1\\
l_3\triangleq(B_1^{-1})^{\ast}[2,2]-r_1^2q_1^2M_1
\end{array}.
\right.\nonumber
\end{eqnarray}
Therefore,
\begin{eqnarray}
\left|(B_1^{-1})^{\ast}[2,2]-\frac{((B_1^{-1})^{\ast}[1,2])^2}{(B_1^{-1})^{\ast}[1,1]}\right|
&=&\left|\frac{r_1^2(l_3q_2^2M_1+l_1q_1^2M_1+2q_1q_2M_1l_2)+l_1l_3-l_2^2}{r_1^2q_2^2M_1+l_1}\right|\nonumber\\
&\leq &\frac{2r_1^2M_1M_2(q_1^2+q_2^2)+2M_2^2}{\left|r_1^2q_2^2M_1+l_1\right|},\nonumber
\end{eqnarray}
which yields
\begin{eqnarray}\label{key2}
\limsup_{r_1\rightarrow+\infty}\left|(B_1^{-1})^{\ast}[2,2]-\frac{((B_1^{-1})^{\ast}[1,2])^2}{(B_1^{-1})^{\ast}[1,1]}\right|\leq 1+\frac{2M_2}{q_2^2}.
\end{eqnarray}
In order to estimate the right hand side of \dref{key}, we define two functions $H_1(\cdot)$ and $H_2(\cdot)$ by
\begin{eqnarray}
H_1\left(\frac{q_1}{q_2}\right)&\triangleq &\lim_{r_1\rightarrow+\infty}D_1[1,1]=\sum_{k=1}^{n}a_{j^{*}k}C[m(k-1)+1,1]-a_{j^{*}j^{*}}\frac{z_1^{\tau}C_1Bz_1}{z_1^{\tau}Bz_1}\nonumber
\end{eqnarray}
and
\begin{eqnarray}
H_2\left(\frac{q_1}{q_2}\right)&\triangleq &\lim_{r_1\rightarrow+\infty}D_1[2,1]=\sum_{k=1}^{n}a_{j^{*}k}C[m(k-1)+2,1]-a_{j^{*}j^{*}}\frac{z_1^{\tau}C_2Bz_1}{z_1^{\tau}Bz_1}\nonumber,
\end{eqnarray}
where $C_2\in\mathbb{R}^{m\times m}$ satisfies
\begin{eqnarray}
C_2=\begin{bmatrix}
0 &C[m(j^{*}-1)+1,1] & 0& \ldots & 0\\
\vdots &\vdots & \vdots & \ddots & \vdots\\
0 &C[mj^{*},1]& 0 & \ldots &0\\
\end{bmatrix}.\nonumber
\end{eqnarray}
As a result,
\begin{eqnarray}\label{rhsK}
&&\frac{1}{M_1r_1^2}\frac{(D_1[1,1](B_1^{-1})^{\ast}[1,2]-D_1[2,1](B_1^{-1})^{\ast}[1,1])^2}{(B_1^{-1})^{\ast}[1,1]}\nonumber\\
&=&\frac{(D_1[1,1](l_2-r_1^2q_1q_2M_1)-D_1[2,1](r_1^2q_2^2M_1+l_1))^2}{M_1r_1^2(r_1^2q_2^2M_1+l_1)},\nonumber\\
&\rightarrow &q_2^2\left(H_1\left(\frac{q_1}{q_2}\right)\frac{q_1}{q_2}+H_2\left(\frac{q_1}{q_2}\right)\right)^2
\end{eqnarray}
as $r_1\rightarrow+\infty$. Therefore, if
\begin{eqnarray}\label{b11}
H_1\left(\frac{q_1}{q_2}\right)\frac{q_1}{q_2}+H_2\left(\frac{q_1}{q_2}\right)\not=0,
\end{eqnarray}
then \dref{K12<0} will follow directly from \dref{key2} and \dref{rhsK}  by letting  $r_1>N(q_1,q_2)$ for some sufficiently large number $N(z_1)$.

So, the remainder is  to show that there is a $x\in\mathbb{R}$ such that
\begin{eqnarray}\label{H12x}
H_1(x)x+H_2(x)\not=0,
\end{eqnarray}
which is equivalent to
\begin{eqnarray}
&&-a_{j^{*}j^{*}}\frac{(x,1,0,\ldots,0)(xC_1+C_2)B(x,1,0,\ldots,0)^{\tau}}{(x,1,0,\ldots,0)B(x,1,0,\ldots,0)^{\tau}}+x\sum_{k=1}^{n}a_{j^{*}k}C[m(k-1)+1,1]\nonumber\\
&&+\sum_{k=1}^{n}a_{j^{*}k}C[m(k-1)+2,1]\not=0.\nonumber
\end{eqnarray}
If \dref{H12x} fails, then the coefficient of $x^3$ of
\begin{eqnarray}
(x,1,0,\ldots,0)B(x,1,0,\ldots,0)^{\tau}(H_1(x)x+H_2(x))\nonumber
\end{eqnarray}
is
\begin{eqnarray}
B[1,1]\cdot\sum_{k\not=j^{*}}a_{j^{*}k}C[m(k-1)+1,1]=0,\nonumber
\end{eqnarray}
which contradicts to $C\not\in\mathcal{P}_1$. So,  \dref{b11} holds  if $\frac{q_1}{q_2}=x$.

We now can conclude that  all $v_1=r_1 (q_1,q_2,0,\ldots,0)^{\tau}$
 with $q_1^2+q_2^2=1$, $q_2\not=0$, $\frac{q_1}{q_2}=x$ and $r_1>N(q_1,q_2)$ will result in \dref{K12<0}.
  Note that  $C\not\in\mathcal{P}_1$, by Corollary \ref{four1} again,   there always exists some  $v_1$ fulfilling both  $D\not\in\mathcal{P}_0$ and   \dref{K12<0},
 which means $K$ cannot be a semi-positive definite matrix.
\end{proof}

\section*{Appendix C}\label{AppC}

In this appendix, we prove Theorem \ref{lms} and Remark \ref{lmsr}(ii).

\begin{proof}[Proof of Theorem \ref{lms}] (i) We  verify
$$
\|\widetilde\Theta_{k}\|\stackrel{a.s.}{\longrightarrow} 0\,\,\,\mbox{and}\,\,\, \|\widetilde\Theta_{k}\|\stackrel{L_2}{\longrightarrow} 0\quad \mbox{as} \,\,k\rightarrow+\infty
$$
separately under Assumptions A3' and A3''.\\
\emph{Case 1:}
Consider the case where Assumption A3' holds.
Since
\begin{eqnarray}\label{gs}
\widetilde\Theta_{k+1}=(\mathcal{A}\otimes I_{m})(I_{mn}-F_{k})\widetilde\Theta_{k}+(\mathcal{A}\otimes I_{m})L_{k}V_{k},
\end{eqnarray}
by denoting $\mathcal{B}=\mathcal{A}^\tau\mathcal{A}$,  Assumption A2' shows
\begin{eqnarray}\label{h1}
&&E\widetilde\Theta_{k+1}^{\tau}\widetilde\Theta_{k+1}\nonumber\\
&=&E\left[\widetilde\Theta_{k}^{\tau}(I_{mn}-F_{k})(\mathcal{B}\otimes I_{m})(I_{mn}-F_{k})\widetilde\Theta_{k}\right]+E[V_{k}^{\tau}L_k^{\tau}(\mathcal{B}\otimes I_{m})L_{k}V_{k}]\nonumber\\
&\leq &E\left[\widetilde\Theta_{k}^{\tau}(I_{mn}-F_{k})(\mathcal{B}\otimes I_{m})(I_{mn}-F_{k})\widetilde\Theta_{k}\right]+\frac{nM}{(k+1)^{2\beta}}.
\end{eqnarray}
Note that $\widetilde\Theta_{k}\in \mathcal{F}_{k-1}$,  by Lemma \ref{gjb2} with $h=1$,
\begin{eqnarray}\label{1wb}
&&E\left[\widetilde\Theta_{k}^{\tau}(I_{mn}-F_{k})(\mathcal{B}\otimes I_{m})(I_{mn}-F_{k})\widetilde\Theta_{k}\right]\nonumber\\
&=& E\big[E\big[\widetilde\Theta_{k}^{\tau}(I_{mn}-F_{k})(\mathcal{B}\otimes I_{m})\cdot(I_{mn}-F_{k})\widetilde\Theta_{k}|\mathcal{F}_{k-1}\big]\big]\nonumber\\
&=&E\big[\widetilde\Theta_{k}^{\tau}E\big[(I_{mn}-F_{k})(\mathcal{B}\otimes I_{m})\cdot(I_{mn}-F_{k})\big|\mathcal{F}_{k-1}\big]\widetilde\Theta_{k}]\nonumber\\
&\leq & E\bigg[\widetilde\Theta_{k}^{\tau}\widetilde\Theta_{k}\bigg(1-\frac{s}{(k+1)^{\beta}}\lambda_{min}\bigg(\sum_{i=1}^{n}E\bigg[\frac{2\phi_{k,i}\phi_{k,i}^{\tau}}{1+\|\phi_{k,i}\|^2}\nonumber-\frac{1}{(k+1)^{\beta}}\frac{(\phi_{k,i}\phi_{k,i}^{\tau})^2}{(1+\|\phi_{k,i}\|^2)^2}\bigg|\mathcal{F}_{k-1}\bigg]\bigg)\bigg)\bigg]\nonumber\\
&\leq & E\bigg[\widetilde\Theta_{k}^{\tau}\widetilde\Theta_{k}\bigg(1-\frac{s}{(k+1)^{\beta}} \cdot \lambda_{min}\bigg(\sum_{i=1}^{n}E\bigg[\frac{\phi_{k,i}\phi_{k,i}^{\tau}}{1+\|\phi_{k,i}\|^2}\bigg|\mathcal{F}_{k-1}\bigg]\bigg)\bigg)\bigg]     .
\end{eqnarray}
For $\a$ and $c$  defined in Assumption A3',  \dref{h1}--\dref{1wb} yield 
\begin{eqnarray}
E\widetilde\Theta_{k+1}^{\tau}\widetilde\Theta_{k+1}\leq \left(1-\frac{scn}{(k+1)^{\alpha+\beta}}\right)E\widetilde\Theta_{k}^{\tau}\widetilde\Theta_{k}+\frac{nM}{(k+1)^{2\beta}}.\nonumber
\end{eqnarray}
So,  \cite[Lemma 4.2]{fa1967} implies
\begin{eqnarray}\label{sjx}
\limsup_{k\rightarrow+\infty}k^{\beta-\alpha}E\widetilde\Theta_{k}^{\tau}\widetilde\Theta_{k}\leq\frac{M}{sc}.
\end{eqnarray}

Now, we prove  the strong consistency. 
Since $\widetilde\Theta_{k}^{\tau}\widetilde\Theta_{k}\in\mathcal{F}_{k}^{'}\triangleq \sigma\lbrace \Phi_{j},V_{l},0\leq j\leq k, 0\leq l\leq k-1\rbrace$, similar to \dref{h1}--\dref{1wb},
\dref{gs} infers
\begin{eqnarray}\label{tj}
E\left[\widetilde\Theta_{k+1}^{\tau}\widetilde\Theta_{k+1}\big|\mathcal{F}_{k}^{'}\right]\leq \widetilde\Theta_{k}^{\tau}\widetilde\Theta_{k}+\frac{nM}{(k+1)^{2\beta}}.
\end{eqnarray}
By using Lemma \ref{ty}, \dref{tj} immediately yields
\begin{eqnarray}
\lim_{k\rightarrow+\infty}\widetilde\Theta_{k+1}^{\tau}\widetilde\Theta_{k+1}=0,\quad \mbox{a.s.}.\nonumber
\end{eqnarray}

\emph{Case 2:} Let Assumption A3'' hold.
Denote
 \begin{eqnarray*}
\left\{
\begin{array}{l}
\Gamma_{k}\triangleq(\mathcal{A}\otimes I_{m})L_{k}V_{k}V_{k}^{\tau}L_{k}^{\tau}(\mathcal{A}^{\tau}\otimes I_{m})\\
\Pi(k,i)\triangleq\prod_{j=k}^{k-i+1}(\mathcal{A}\otimes I_{m})(I_{mn}-F_{k})
\end{array}.
\right.
\end{eqnarray*}
Then,  \dref{gs} together with  Assumption A3''(ii) deduces
\begin{eqnarray}\label{dg}
E\widetilde\Theta_{k+h}\widetilde\Theta_{k+h}^{\tau}
&=&E[\Pi(k+h-1,h)\widetilde\Theta_{k}\widetilde\Theta_{k}^{\tau}\Pi(k+h-1,h)^{\tau}]
\nonumber\\
&&+\sum_{i=0}^{h-1}E[\Pi(k+h-1,i)\Gamma_{k+h-i}\Pi(k+h-1,i)^{\tau}],\nonumber
\end{eqnarray}
and hence
\begin{eqnarray*}\label{hj}
&&E\widetilde\Theta_{k+h}^{\tau}\widetilde\Theta_{k+h}=\mbox{tr}(E\widetilde\Theta_{k+h}\widetilde\Theta_{k+h}^{\tau})\nonumber\\
&=&E[\widetilde\Theta_{k}^{\tau}\Pi(k+h-1,h)^{\tau}\Pi(k+h-1,h)\widetilde\Theta_{k}]\nonumber\\
&&+\sum_{i=0}^{h-1}\mbox{tr}(E[\Pi(k+h-1,i)\Gamma_{k+h-i-1}\Pi(k+h-1,i)^{\tau}])\nonumber\\
&\leq &E[\widetilde\Theta_{k}^{\tau}\Pi(k+h-1,h)^{\tau}\Pi(k+h-1,h)\widetilde\Theta_{k}]
\nonumber\\
&&+\sum_{i=0}^{h-1}E\bigg[\prod_{j=k+h-1}^{k+h-i} \lambda_{max}((I_{mn}-F_{j})(\mathcal{B}\otimes I_{m})(I_{mn}-F_{j}))\nonumber\\
&&\cdot \lambda_{max}(\mathcal{B}\otimes I_{m})\lambda_{max}(L_{k+h-i-1}^{\tau}L_{k+h-i-1})\cdot V_{k+h-i-1}^{\tau}V_{k+h-i-1}\bigg]\nonumber\\
&\leq & E[\widetilde\Theta_{k}^{\tau}\Pi(k+h-1,h)^{\tau}\Pi(k+h-1,h)\widetilde\Theta_{k}]
+\frac{hnM}{(k+1)^{2\beta}}.
\end{eqnarray*}
Similar to \dref{1wb}--\dref{tj}, by applying Lemma \ref{gjb2} and   Assumption A3''(i), one has
\begin{eqnarray*}\label{yj}
E[\widetilde\Theta_{k}^{\tau}\Pi(k+h-1,h)^{\tau}\Pi(k+h-1,h)\widetilde\Theta_{k}]\leq \left(1-\frac{shnc}{(k+1)^{\alpha+\beta}}\right)E\widetilde\Theta_{k}^{\tau}\widetilde\Theta_{k},
\end{eqnarray*}
and finally can obtain
 \begin{eqnarray*}
\left\{
\begin{array}{l}
E\widetilde\Theta_{k+h}^{\tau}\widetilde\Theta_{k+h}\leq \left(1-\frac{shnc}{(k+1)^{\alpha+\beta}}\right)E\widetilde\Theta_{k}^{\tau}\widetilde\Theta_{k}+\frac{hnM}{(k+1)^{2\beta}}\\
E[\widetilde\Theta_{k+h}^{\tau}\widetilde\Theta_{k+h}|\mathcal{G}_{k}]\leq \widetilde\Theta_{k}^{\tau}\widetilde\Theta_{k}+\frac{hnM}{(k+1)^{2\beta}}
\end{array}.
\right.
\end{eqnarray*}
So, as the arguments for statement (i),    given any $k\geq 0$,
 \begin{eqnarray}\label{hconv}
\left\{
\begin{array}{l}
\limsup_{j\rightarrow+\infty}(k+hj)^{\beta-\alpha}E\widetilde\Theta_{k+hj}^{\tau}\widetilde\Theta_{k+hj}\leq \frac{M}{sc}\\
\lim_{j\rightarrow+\infty}\widetilde\Theta_{k+jh}^{\tau}\widetilde\Theta_{k+jh}=0,\quad \mbox{a.s.}
\end{array}.
\right.
\end{eqnarray}
The result is thus proved by taking $k=0,\ldots,h-1$.

(ii) The mean-square convergence rate has already been derived by \dref{sjx} and \dref{hconv}. The rest part is devoted to computing
 the convergence rate of the strong consistency under Assumption A3'. A similar analysis will lead to the same conclusion under Assumption A3''.

For every $\varepsilon\in(0,\beta-\alpha)$, we first use an induction method to prove that for all $j\in [1,l+1]$,
\begin{eqnarray}\label{jkk}
\lim_{k\rightarrow+\infty}k^{j(\beta-\alpha-\varepsilon)}E(\widetilde\Theta_{k}^{\tau}\widetilde\Theta_{k})^j=0.
\end{eqnarray}
Since \dref{jkk} is obviously true for $j=1$ by \dref{sjx}, 
 we assume that \dref{jkk} holds for all $j\leq k_0$ with some $k_0\in [1,l]$. Now, check \dref{jkk} for $j=k_0+1$.
Calculate
\begin{eqnarray}
\widetilde\Theta_{k+1}^{\tau}\widetilde\Theta_{k+1}&=&\widetilde\Theta_{k}^{\tau}(I_{mn}-F_{k})(\mathcal{B}\otimes I_{m})(I_{mn}-F_{k})\widetilde\Theta_{k}+V_{k}^{\tau}L_{k}(\mathcal{B}\otimes I_{m})L_{k}V_{k}\nonumber\\
&&+2\widetilde\Theta_{k}^{\tau}(I_{mn}-F_{k})(\mathcal{B}\otimes I_{m})L_{k}V_{k}\nonumber\\
&\triangleq & H_{k,1}+H_{k,2}+2H_{k,3},\nonumber
\end{eqnarray}
therefore,
\begin{eqnarray}
(\widetilde\Theta_{k+1}^{\tau}\widetilde\Theta_{k+1})^{k_0+1}=\sum_{i_1+i_2+i_3=k_0+1}2^{i_3}C_{k_0+1}^{i_1}C_{k_0+1}^{i_2}C_{k_0+1}^{i_3}H_{k,1}^{i_1}H_{k,2}^{i_2}H_{k,3}^{i_3}.\nonumber
\end{eqnarray}
We estimate $E(\widetilde\Theta_{k+1}^{\tau}\widetilde\Theta_{k+1})^{k_0+1}$ by considering the following three cases.
\\
\emph{Case 1:} $i_2+\frac{i_3}{2}\geq 1$. Then,  $i_1+\frac{i_3}{2}\leq k_0\leq l$, and by the induction hypothesis,
\begin{eqnarray*}
&&\limsup_{k\rightarrow+\infty}k^{\left(i_1+\frac{i_3}{2}\right)(\beta-\alpha-\varepsilon)}E(\widetilde\Theta_{k}^{\tau}\widetilde\Theta_{k})^{i_1+\frac{i_3}{2}}\leq \limsup_{k\rightarrow+\infty}k^{\left(i_1+\frac{i_3}{2}\right)(\beta-\alpha-\varepsilon)}(E(\widetilde\Theta_{k}^{\tau}\widetilde\Theta_{k})^{k_0})^{\frac{i_1+\frac{i_3}{2}}{k_0}}=0.\nonumber
\end{eqnarray*}
Therefore, \dref{EVl} shows
\begin{eqnarray*}
\left|E\left[H_{k,1}^{i_1}H_{k,2}^{i_2}H_{k,3}^{i_3}\right]\right|
&\leq &\frac{E[(\widetilde\Theta_{k}^{\tau}\widetilde\Theta_{k})^{i_1+\frac{i_3}{2}}\cdot E[(V_{k}^{\tau}V_{k})^{i_2+\frac{i_3}{2}}|\Phi_{j}, 0\leq j\leq k]]}{(k+1)^{2\beta i_2+\beta i_3}}\nonumber\\
&=& o\left((k+1)^{-\left(i_1+\frac{i_3}{2}\right)(\beta-\alpha-\varepsilon)-2\beta i_2-\beta i_3}\right)\nonumber\\
&=& o\left((k+1)^{-\left(k_0+1\right)(\beta-\alpha-\varepsilon)-\alpha-\beta-\varepsilon}\right).
\end{eqnarray*}
\emph{Case 2:} $i_2=0$, $i_3=1$. Since
\begin{eqnarray}
E\left[V_k|\Phi_{j}, V_i, 0\leq j\leq k, 0\leq i\leq k-1\right]=\textbf{0},\nonumber
\end{eqnarray}
it immediately follows that
\begin{eqnarray*}
&&E\left[H_{k,1}^{i_1}H_{k,2}^{i_2}H_{k,3}^{i_3}\right]=E\left[H_{k,1}^{k_0}H_{k,3}\right]\nonumber\\
&=&E\left[E\left[H_{k,1}^{k_0}H_{k,3}\Big|\Phi_{j}, V_i, 0\leq j\leq k, 0\leq i\leq k-1\right]\right]=0.
\end{eqnarray*}
\emph{Case 3:} $i_2=i_3=0$. Similar to \dref{1wb},
\begin{eqnarray}
E\left[H_{k,1}^{i_1}H_{k,2}^{i_2}H_{k,3}^{i_3}\right]&=& E\left[H_{k,1}^{k_0+1}\right]\leq E\left[(\widetilde\Theta_{k}^{\tau}\widetilde\Theta_{k})^{k_0}H_{k,1}\right] \nonumber\\
&=&E\left[(\widetilde\Theta_{k}^{\tau}\widetilde\Theta_{k})^{k_0}E\left[H_{k,1}|\mathcal{F}_{k-1}\right]\right]\leq  \left(1-\frac{scn}{(k+1)^{\alpha+\beta}}\right)E(\widetilde\Theta_{k}^{\tau}\widetilde\Theta_{k})^{k_0+1}.\nonumber
\end{eqnarray}
So, combining Cases 1--3, we deduce that as $k\rightarrow+\infty$,
\begin{eqnarray}
E(\widetilde\Theta_{k+1}^{\tau}\widetilde\Theta_{k+1})^{k_0+1}&\leq & \left(1-\frac{scn}{(k+1)^{\alpha+\beta}}\right)E(\widetilde\Theta_{k}^{\tau}\widetilde\Theta_{k})^{k_0+1}+o\left((k+1)^{-\left(k_0+1\right)(\beta-\alpha-\varepsilon)-\alpha-\beta-\varepsilon}\right).\nonumber
\end{eqnarray}
By \cite[Lemma 4.2]{fa1967} again,
\begin{eqnarray}
\lim_{k\rightarrow+\infty}k^{(k_0+1)(\beta-\alpha-\varepsilon)} E(\widetilde\Theta_{k}^{\tau}\widetilde\Theta_{k})^{k_0+1}=0,\nonumber
\end{eqnarray}
which means assertion \dref{jkk} is true for all $j\in [1,l+1]$.

Next, for any $\varepsilon\in(0,\beta-\alpha-\frac{1}{l+1})$,  select some $\varepsilon_0\in(0,\beta-\alpha-\frac{1}{l+1}-\varepsilon)$. So, $l+1>(\beta-\alpha-\varepsilon-\varepsilon_0)^{-1}.$
By \textit{Markov's inequality}, for any $\delta>0$,
\begin{eqnarray}
P\left((k+1)^{\varepsilon}\widetilde\Theta_{k}^{\tau}\widetilde\Theta_{k}>\delta\right)\leq \frac{(k+1)^{\varepsilon (l+1)}E(\widetilde\Theta_{k}^{\tau}\widetilde\Theta_{k})^{ l+1}}{\delta^{l+1}}=o((k+1)^{-(l+1)(\beta-\alpha-\varepsilon-\varepsilon_0)}),\nonumber
\end{eqnarray}
which implies $$\sum_{k=1}^{+\infty}P\left((k+1)^{\varepsilon}\widetilde\Theta_{k}^{\tau}\widetilde\Theta_{k}>\delta\right)<+\infty. $$
This together with \textit{Borel-Cantelli lemma} yields
\begin{eqnarray}\label{bc}
P\left((k+1)^{\varepsilon}\widetilde\Theta_{k}^{\tau}\widetilde\Theta_{k}>\delta,~ \mbox{i.o.}\right)=0.\nonumber
\end{eqnarray}
So, \dref{asconv} is true by noting that $\delta$ can take arbitrary values.
\end{proof}

\begin{proof}[Proof of Remark \ref{lmsr}(ii)]
At first, it is easy to verify
\begin{eqnarray}
\lambda_{max}(I_n-\mathcal{A})\leq 2-2\inf_{i\in[1,n]}a_{ii}.\nonumber
\end{eqnarray}
Taking $\varepsilon=\frac{\inf_{i\in[1,n]}a_{ii}}{2}$ in \cite[Lemmas 5.5, 5.8, 5.10]{guo2018} shows that for any $k\geq 0$, if $l$ is  sufficiently large,
\begin{eqnarray}\label{bb}
B_{j}^{\tau}B_{j}\leq (1-\varepsilon)(B_{j}^{\tau}+B_{j}),\quad j\geq 1,
\end{eqnarray}
where
\begin{eqnarray}
B_{j}\triangleq F_{k+lh+j-1}+(I_{mn}-\mathcal{A}\otimes I_{m})(I_{mn}-F_{k+lh+j-1}).\nonumber
\end{eqnarray}

Fix  $k,l\geq 0$ and define $I_{j}(A)\triangleq I_{mn}-F_{k+lh+j-1}$ in \dref{ap}.
Then,  \dref{bb} and \cite[Lemmas 5.5, 5.10]{guo2018} yield
\begin{eqnarray}
&&\lambda_{min}(E\left[I_{mn}-\psi_h^{\tau}\psi_h\big|\mathcal{F}_{k-1}\right])\nonumber\\
&\geq & \frac{\varepsilon}{(1+4(1-\varepsilon)h)^2}\cdot\frac{0.5}{(k+(l+1)h)^{\beta}}\cdot\frac{\lambda(\mathcal{G})h}{2+\lambda(\mathcal{G})}\nonumber\\
&&\cdot \lambda_{min}\left(E\left[\frac{1}{nh}\sum_{i=1}^{n}\sum_{j=k+lh}^{k+lh+h-1}\frac{\phi_{j,i}\phi_{j,i}^{\tau}}{1+\|\phi_{j,i}\|^2}\bigg|\mathcal{F}_{k-1}\right]\right)\nonumber\\
&\geq &\frac{\varepsilon}{2(1+4h)^2}\cdot\frac{(k+lh)^{\beta}}{(k+(l+1)h)^{\beta}}\cdot\frac{\lambda(\mathcal{G})}{4n}\cdot\lambda_{min}\left(E\left[\sum_{j=1}^{h}\sum_{i=1}^{n}(I_{m}-A_{j,i})\bigg|\mathcal{F}_{k-1}\right]\right)\nonumber\\
&\geq & \frac{\inf_{i\in[1,n]}a_{ii}}{32n(1+4h)^2}\lambda(\mathcal{G})\cdot\lambda_{min}\left(E\left[\sum_{j=1}^{h}\sum_{i=1}^{n}(I_{m}-A_{j,i})\bigg|\mathcal{F}_{k-1}\right]\right),\nonumber
\end{eqnarray}
where $l$ is sufficiently large.
This remark is thus proved by taking $s=\frac{\inf_{i\in[1,n]}a_{ii}}{32n(1+4h)^2}\lambda(\mathcal{G})$.
\end{proof}

\end{document}